\documentclass[reqno,twoside]{amsart}
\usepackage{amsfonts}
\usepackage{amsmath,amssymb,amsthm,amsthm}
\usepackage{mathtools}
\usepackage{etoolbox}
\usepackage{color}
\usepackage[dvipsnames,svgnames,table]{xcolor}
\usepackage{graphicx}
\usepackage{subcaption}
\usepackage{float}
\usepackage{placeins}
\usepackage{comment}
\usepackage[dvips,bottom=1.4in,right=1in,top=1in, left=1in]{geometry}
\usepackage{eurosym}
\usepackage{amsfonts}
\usepackage{amsmath,amssymb,lineno,amsthm,epic,amsthm}
\usepackage{natbib}
\def\R{\mathbb R}

\def\N{\mathbb N}

\def\D{\displaystyle}

\newtheorem{theorem}{Theorem}[section]
\newtheorem{proposition}[theorem]{Proposition}
\newtheorem{remark}[theorem]{Remark}
\newtheorem{lemma}[theorem]{Lemma}
\newtheorem{corollary}[theorem]{Corollary} 
\newtheorem{definition}[theorem]{Definition}

\numberwithin{equation}{section}
\newcommand\emb{\stackrel{\mathclap{\normalfont d}}{\hookrightarrow}}
\newcommand\embc{\stackrel{\mathclap{\normalfont c}}{\hookrightarrow}}
\usepackage{thmbox}
\usepackage{dsfont}
\usepackage{float}
\usepackage{appendix}
\usepackage{enumitem}

\begin{document}
\title[Optimal control via chemotactic sensitivity]{Optimal control via chemotactic sensitivity of a logistic Keller-Segel model}

\author[Yacouba Simporé]{Yacouba Simporé\textsuperscript{1,2}}
\author[Mahamadi Warma]{Mahamadi Warma\textsuperscript{3}}
\author[Luis P. Yapu]{Luis P. Yapu\textsuperscript{1,4}}

\thanks{\textsuperscript{1}Chair for Dynamics, Control, Machine Learning and Numerics, Department of Mathematics, Friedrich-Alexander-Universit¨at
Erlangen–Nürnberg, Cauerstrasse 11, 91058 Erlangen, Germany.}
\thanks{\textsuperscript{2}Université Yembila Abdoulaye TOGUYENI, BP 54 Fada N'Gourma, Burkina Faso. }
\thanks{\textsuperscript{3}Department of Mathematical Sciences and the Center for Mathematics and Artificial Intelligence, George Mason University, Fairfax, VA 22030, USA}
\thanks{\textsuperscript{4}Universidade Federal Fluminense, Instituto de Matemática e Estatistica, Campus Gragoatá, rua Professor Marcos Waldemar de Freitas Reis, s/n - São Domingos, Niterói - RJ - Brazil}

\keywords{Keywords: Keller--Segel system, chemotaxis, trilinear optimal control, chemotactic sensitivity, weak solutions, first-order optimality conditions,
B-spline collocation, tumor cell migration}
\subjclass[2020]{Primary 49J20, 49K20; Secondary 35K57, 35Q92, 92C17, 65M70}

\date{}
\begin{abstract}
We study an optimal control problem for a two-equations Keller-Segel system on a bounded domain with Neumann boundary conditions, where the chemotactic sensitivity is modulated by a control $f(x,t)$ acting on the taxis flux $\nabla\!\cdot(f\,u\,\nabla v)$. 
We prove the well-posedness of the state equation under low regularity assumptions.
The analysis is based on the study of an $\varepsilon$–regularized problem, the application of Schauder’s fixed point theorem, $\varepsilon$–independent,  positivity and a priori estimates, the passage to the limit as $\varepsilon \to 0$, and a uniqueness argument. 
For a cost functional combining trajectory tracking, taxis penalization, control regularization, and terminal objectives, we establish the existence of optimal controls and derive the first-order optimality conditions through a Lagrange multiplier framework, leading to an adjoint system and a variational inequality for the optimal control. 
A numerical scheme based on spline collocation in space and Runge-Kutta time-stepping is set and implemented.
Numerical experiments are performed
to simulate the free and controlled dynamics in conservative and non-conservative cases.
\end{abstract}

\maketitle

\section{Introduction, problem formulation and main results}
 \label{sec:introduction}
 In this section we first describe the model we would like to study by giving the mathematical formulation of the problem and the objectives of our study. Secondly, we state the main results obtained in the paper.
 Next, we give the novelty, a brief motivation and background on the topic of the paper, then we introduce the real life phenomena where the model can be applied. 
 
\subsection{Mathematical formulation of the problem}
In this work, we focus on the control of a Keller-Segel type model for cell population dynamics. The system consists of two coupled nonlinear parabolic equations in a bounded domain \(\Omega \subset \mathbb{R}^2\) with boundary $\partial\Omega$, and with Neumann boundary conditions.  Throughout the rest of the paper, given a time $T>0$, we let $\Omega_T:=\Omega\times (0,T)$ and $\Gamma_T:=\partial\Omega\times (0,T)$.
We consider the controlled Keller-Segel system
\begin{equation}\label{1}
    \begin{cases}
        \D\partial_t u - D_u \Delta u + \nabla \cdot (f u \nabla v) = r u - \mu u^2 & \mbox{ in  }\Omega_T, \\
        \D\partial_t v - D_v \Delta v + \alpha v = \beta u & \mbox{ in } \Omega_T, \\
        \partial_\nu u = \partial_\nu v = 0 & \mbox{ on } \Gamma_T, \\
        \D u(\cdot,0) = u_0, \; v(\cdot,0) = v_0 &  \mbox{ in } \Omega,
    \end{cases}
\end{equation}
where
\begin{itemize}
    \item \( u \) denotes the cell density, \( v \) the chemical concentration;
    \item \( f\) is the control variable modulating the chemotactic sensitivity;
    \item $r,\mu>0$ are real numbers,
    \item the logistic term \( ru - \mu u^2 \) models growth and saturation;
    \item $\partial_\nu u$ and $\partial_\nu v $ denote the normal derivative of the functions $u$ and $v$, respectively;
    \item the homogeneous Neumann boundary conditions ensure the mass conservation if the logistic term is not present.
\end{itemize}
We notice that in \eqref{1}, $D_u$, $D_v$ are universal constants that do not depend on the functions $u$ and $v$.

We analyze an optimal control problem where the control \( f \) acts locally on the taxis flux
\[
    \nabla \cdot (f u \nabla v),
\]
with the aim of steering the cell population distribution towards a desired state while minimizing the control effort.  More precisely, we consider the cost functional
\begin{equation}
    \begin{aligned}\label{cout}
        J(u,v,f) := &\gamma_u \int_0^T \| u(t) - u_d(t) \|^2_{L^2(\Omega)} dt 
        + \gamma_{\mathrm{taxis}}\int_0^T \| f u \nabla v)\|^2_{L^2(\Omega)}dt\\ 
        &+ \gamma_f \int_0^T \| f(t) \|^4_{L^4(\Omega)} dt 
        + \gamma_T \| u(T) - u_T \|^2_{L^2(\Omega)}.
    \end{aligned}
\end{equation}
Here also, $\gamma_u$ and $\gamma_f$ are positive constants that are independent of $u$ and $f$.

The functional \( J \) balances the following objectives. Tracking a desired spatiotemporal profile \( u_d \in L^2(\Omega_T)\) of the cell density over the time horizon;
regulating the intensity of the taxis through an explicit penalization of the control flux magnitude $\int_0^T\|  f u \nabla v  \|_{L^2(\Omega)}dt$; 
minimizing the energy or magnitude of the control \( f \)  and achieving the desired terminal state \( u_T\in L^2(\Omega) \) at time \( T \).

Before we give the formulation of the control problem, we introduce the spaces needed in the formulation.
Let $\xi_1,\xi_2\in L^\infty(\Omega_T)$ satisfy
\[
    \xi_1(x,t)\leq \xi_2(x,t) \quad\text{for a.e. }(x,t)\in\Omega_T.
\]
We define the set of admissible controls by
\begin{equation}
    \label{eq:def_F}
    \mathcal{F}:=\left\{f\in L^4(\Omega_T)\;:\;\xi_1(x,t)\leq f(x,t)\leq \xi_2(x,t)\quad\text{for a.e. }(x,t)\in\Omega_T\right\}.
\end{equation}
Next, let
\begin{equation}\label{def-W4}
    W_4:=\bigg\{u \in L^{\infty}(0, T; L^4(\Omega))\cap L^2(0,T;H^1(\Omega)):\; \partial_t u \in L^{2}(0,T;(H^{1}(\Omega))') \bigg\},
\end{equation}
and
\begin{equation}\label{X4}
    X_4:=\bigg\{v \in C([0, T]; W_{\mathbf{n}}^{\frac{3}{2},4}(\Omega)) \cap L^4(0, T; W^{2,4}(\Omega)):\;  \partial_t v \in L^{4}(\Omega_T)\bigg\}.
\end{equation}
The precise definition of the Sobolev spaces involved will be given in Section \ref{Func-Spa}. Let also denote
\begin{equation}\label{Sad}
    \mathcal S_{\rm ad}:=\left\{(u,v,f)\in W_4\times X_4\times \mathcal F:\; (u,v) \mbox{ is a solutions of } \eqref{1} \mbox{ with control } f\right\}.
\end{equation}
Our notion of solutions to the system \eqref{1} will be given in Section \ref{wesk-sol}.
 
Our control problem can then be formulated as follows:
\begin{equation}\label{cont-prob}
    \min_{(u,v,f)\in \mathcal S_{\rm ad}}J(u,v,f),
\end{equation}
where the functional $J$ is given by \eqref{cout}.

The main contributions of the present work are the following.
\begin{itemize}
    \item We analyze the well-posedness of the controlled Keller-Segel system under weak regularity assumptions, which are particularly relevant when considering singular or spatially localized controls.
    \item We establish the existence of an optimal control that minimizes the cost functional $J$.
    \item We derive the corresponding first-order necessary optimality conditions, involving an adjoint system and a variational inequality for the optimal control.
    \item We highlight the mathematical challenges associated with the control of the nonlinear taxis terms and provide a rigorous framework for their analysis.
    \item We set a numerical algorithm for the resolution of the optimal control problem. The strategy combines collocation methods using B-splines and integration on time using Runge-Kutta formulae.
\end{itemize}

The results obtained contribute to understanding optimal control mechanisms in chemotaxis models and may serve as a theoretical basis for future applications in biological regulation and targeted interventions.

\subsection{Statement of the main results}\label{wesk-sol}
In this section, we state the main results obtained in the paper.
We start by introducing the notion of weak solutions to the system \eqref{1} and show their existence, uniqueness, and regularity. We are interested in solutions which are bounded with respect to the control $f$. 

Throughout the rest of the article,
$\Omega\subset\mathbb R^2$ is a bounded domain with a smooth boundary $\partial\Omega$. 
Recall that the control domain $\mathcal F$ is given by \eqref{eq:def_F}. 

\begin{definition} \label{def:weak-solution}
    Let $f \in \mathcal{F}$, $u_0 \in L^4(\Omega)$, $v_0 \in W_{\textbf{n}}^{\frac{3}{2},4}(\Omega),$
    with $u_0 \geq 0$ and $v_0 \geq 0$ a.e. in $\Omega$. A pair $(u,v)$ is said to be a \emph{weak solution} of the system \eqref{1}, if the following assertions hold:
    \begin{itemize}
        \item $u \in W_4$ and $v \in X_4$,
        \item $u \geq 0$, $v \geq 0$ a.e. in $\Omega_T$,
        \item the equation for $u$ in \eqref{1} and the boundary condition hold in a variational sense, 
        \item the equation for $v$ and the boundary condition hold pointwise, and,
        \item the initial conditions for $u$ and $v$ hold in the $L^4(\Omega)$ and $W_{\bf n}^{3/2,4}(\Omega)$ sense, respectively.
    \end{itemize}
    
    More precisely, for all $\varphi\in L^4(0,T;W^{1,4}(\Omega))$, we have that
    \begin{align}\label{W1}
        \!\int_0^T\!\!\langle \partial_t u,\varphi\rangle_{(H^{1}(\Omega))',H^1(\Omega)}\,dt
        &+ D_u\!\int_{\Omega_T} \nabla u\!\cdot\!\nabla\varphi\;dtdx
        - \int_{\Omega_T} fu\,\nabla v\!\cdot\!\nabla\varphi\;dtdx\notag\\
        =& \int_{\Omega_T} (r-\mu u)\,u\,\varphi\;dtdx,
    \end{align}
    $u(\cdot,0)=u_0$ a.e. in $\Omega$, while $v$ solves the system 
    \begin{equation}\label{W2}
        \begin{cases}
            \partial_t v - D_v\Delta v + \alpha v=\beta u\quad &\text{ in }\Omega_T,\\
            \partial_\nu v=0 &\mbox{ on }\Gamma_T,\\
            v(0)=v_0 &\mbox{ in } \Omega.
        \end{cases}
    \end{equation}
    in the strong sense.
\end{definition}

The following theorem is the first main result of the paper. Its proof will be given in Section \ref{sec:proof_theorem_wellp}.

\begin{theorem}\label{mainresult1}
    Let $T>0$, $u_0 \in L^4(\Omega)$, $v_0 \in W_{\textbf{n}}^{\frac{3}{2},4}(\Omega)$ with $u_0,v_0 \geq 0$ a.e. in $\Omega$, and let  $f \in \mathcal{F}$.
    Then,  the system \eqref{1} admits a unique weak solution $(u,v)\in W_4 \times X_4$ in the sense of Definition \ref{def:weak-solution}.
    Moreover, there is a constant $K>0$ such that
    \[
        \begin{aligned}
            \|u\|_{L^2(0,T;H^{1}(\Omega))} + &\|u\|_{L^{\infty}(0,T;L^4(\Omega))} + \|\partial_t u\|_{L^{2}(0,T;(H^{1}(\Omega))')} \\
            + &\|v\|_{C([0,T];W_{\mathbf{n}}^{\frac{3}{2},4}(\Omega))} + \|v\|_{L^4(0,T;W^{2,4}(\Omega))} + \|\partial_t v\|_{L^4(\Omega_T)} \leq K,
        \end{aligned}
    \]
    where  $K = K(\Omega, T, \|u_0\|_{L^4(\Omega)},\|v_0\|_{W_n^{\frac{3}{2},4}(\Omega)}, \|f\|_{L^{\infty}(\Omega_T)})$ and also depends on the other parameters involved in the system.
\end{theorem}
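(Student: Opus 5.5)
The proof follows the route announced in the abstract. We regularize, obtain a solution of the regularized problem by Schauder's fixed point theorem, derive $\varepsilon$-independent positivity and a priori bounds, pass to the limit $\varepsilon\to0$, and close with a uniqueness argument.

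\emph{Regularized problem.} For $\varepsilon>0$, replace the taxis flux $f u\nabla v$ by $f\,T_\varepsilon(u)\nabla v$. Here $T_\varepsilon(s)=\frac{s_+}{1+\varepsilon s_+}$ is a bounded Lipschitz truncation. Similarly, replace the logistic term by $r u-\mu u\,T_\varepsilon(u)$, or cut off $u^2$ at level $1/\varepsilon$.

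\emph{Fixed point map.} Take $\bar u$ in a closed convex bounded set $B\subset L^2(\Omega_T)$. Solve the $v$-equation \eqref{W2} with source $\beta\bar u$ via maximal $L^p$-regularity for the Neumann heat operator. This gives $v\in X_4$, with $\|v\|_{X_4}\le C(\|v_0\|_{W^{3/2,4}_{\bf n}}+\|\bar u\|_{L^4(\Omega_T)})$. The space $W^{3/2,4}_{\bf n}$ is the trace space $(L^4,W^{2,4}_{\bf n})_{1-1/4,4}$. Then solve the linear parabolic equation for $u$ with the drift term $f\,T_\varepsilon(\bar u)\nabla v$, which lies in $L^4(\Omega_T)$. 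This is Galerkin/Lions theory and yields $u\in W_4$.

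Define the map $\Phi(\bar u)=u$. By Aubin–Lions, $W_4$ embeds compactly into $L^2(\Omega_T)$, so $\Phi(B)$ is relatively compact. Continuity of $\Phi$ follows from the stability of both linear problems. Schauder's theorem then gives a fixed point $u_\varepsilon$.

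\emph{Positivity and $\varepsilon$-independent bounds.} Test the $u$-equation with $u_\varepsilon^-$. Since $T_\varepsilon(u)=0$ where $u\le0$, we get $u_\varepsilon\ge0$; the parabolic comparison principle then gives $v_\varepsilon\ge0$. Integrating the $u$-equation, the flux vanishes and the logistic term bounds the $L^1$ norm.

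The key step is an $L^4$ estimate. Test with $u^3$:
\[
\frac14\frac{d}{dt}\|u\|_4^4+3D_u\!\int u^2|\nabla u|^2+\mu\!\int u^5\le r\|u\|_4^4+3\|f\|_\infty\!\int u^3|\nabla u||\nabla v|.
\]
By Young's inequality, the last term is at most $D_u\int u^2|\nabla u|^2+C\int u^4|\nabla v|^2$. We control $\int u^4|\nabla v|^2$ in one of two ways. The first uses Gagliardo–Nirenberg in 2D on $w=u^2$, namely $\|w\|_{L^4}^2\le C\|w\|_{L^2}\|w\|_{H^1}$, together with a bound on $\|\nabla v\|_{L^4}$. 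The second is more robust: use $\mu\int u^5$ and $\int u^4|\nabla v|^2\le \delta\int u^5+C_\delta\int|\nabla v|^{10}$.

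Either way we must bound $\nabla v$ in terms of $u$. Maximal regularity gives $\|v\|_{L^p(0,t;W^{2,p})}\le C(1+\|u\|_{L^p(\Omega_t)})$. Combining this with Sobolev embedding and the $\int u^5$ dissipation, we can close a Gronwall inequality for $y(t)=\|u(t)\|_4^4+\int_0^t\!\int u^5$. This bootstrap — handling the coupling $\int u^4|\nabla v|^2$ uniformly in $\varepsilon$ with only $f\in L^\infty$ — is the main obstacle. If the direct estimate fails to close, I would first derive an $L^2$ bound via the energy $\int u\log u+\frac{1}{2}\|\nabla v\|^2$-type argument, adapted to the presence of $f$ using logistic damping. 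I would then lift to $L^4$.

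Once the $L^4$ bound is in hand, the remaining bounds follow. Testing with $u$ gives the $L^2(H^1)$ bound, the equation gives the bound on $\partial_tu$ in $L^2((H^1)')$, and maximal regularity gives the $X_4$ bounds. Note that $u^2\in L^2(0,T;L^2)$, so the right-hand side of the $u$-equation lies in $L^2((H^1)')$.

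\emph{Limit and uniqueness.} By Aubin–Lions, $u_\varepsilon\to u$ strongly in $L^2(\Omega_T)$ and a.e. We also have $\nabla v_\varepsilon\to\nabla v$ strongly in $L^4$, by compactness of $X_4$ into $L^4(W^{1,4})$. Hence $T_\varepsilon(u_\varepsilon)\nabla v_\varepsilon\to u\nabla v$ in $L^1$, and it is bounded in $L^{2}$, so we may pass to the limit in \eqref{W1}–\eqref{W2}. The estimates pass to the limit by lower semicontinuity, which gives $K$.

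For uniqueness, take two solutions and write $w=u_1-u_2$, $z=v_1-v_2$. Test the difference of the $u$-equations with $w$. Write $f(u_1\nabla v_1-u_2\nabla v_2)=f w\nabla v_1+fu_2\nabla z$, and bound:
\[
\int|w||\nabla v_1||\nabla w|\le \tfrac{D_u}{4}\|\nabla w\|^2+C\|\nabla v_1\|_{L^\infty}^2\|w\|^2 .
\]
The factor $\|\nabla v_1\|_{L^\infty}^2$ is integrable in time, since $v_1\in L^4(W^{2,4})\subset L^4(W^{1,\infty})$. For the second term, $\int u_2|\nabla z||\nabla w|\le\|u_2\|_{L^4}\|\nabla z\|_{L^4}\|\nabla w\|_{L^2}$, and $\|\nabla z\|_{L^4}\lesssim\|z\|_{H^2}$. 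The $z$-equation tested with $-\Delta z$ gives $\|z\|_{H^2}$ in terms of $\|w\|_{L^2}$. The logistic difference is $-\mu w(u_1+u_2)$, which has a good sign up to $r\|w\|^2$. Gronwall then yields $w=z=0$.
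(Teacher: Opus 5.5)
Your overall architecture (regularize, Schauder, positivity, $\varepsilon$-uniform bounds, limit, uniqueness) is the paper's, and your uniqueness argument is essentially sound. The gap is in the step on which everything else rests: the $\varepsilon$-uniform bound in $L^\infty(0,T;L^4(\Omega))$. You flag it as ``the main obstacle'' and leave it open, and the routes you sketch do not close as stated.

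\emph{Why your routes fail.}
\begin{itemize}
\item With your truncated logistic term $\mu u\,T_\varepsilon(u)$, testing with $u^3$ produces $\mu\int_\Omega u^4T_\varepsilon(u)\,dx$, not $\mu\int_\Omega u^5\,dx$. This term is bounded by $\varepsilon^{-1}\int u^4$ where $u\gg 1/\varepsilon$, so it cannot absorb $\delta\int u^5$ uniformly in $\varepsilon$.
\item Even with genuine $\int u^5$ dissipation, bounding $\int_0^t\!\int_\Omega|\nabla v|^{10}$ by maximal regularity in terms of $u$, and interpolating against only the $L^\infty(0,T;L^1(\Omega))$ bound, gives a superlinear power of $\int_0^t\!\int_\Omega u^5$. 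Gronwall then yields at most a local-in-time bound.
\item The fallback functional $\int u\log u+\frac12\|\nabla v\|^2$ has no Lyapunov cancellation when $f$ varies in $(x,t)$. If you bound the cross term $\int_\Omega f\nabla u\cdot\nabla v$ instead, you again need a space--time bound on $u$.
\end{itemize}

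\emph{The missing idea.} You need a space--time $L^2$ bound extracted from the logistic damping, together with a regularization in which the source of the $v$-equation is truncated by the same function as the damping. The paper uses $a_\varepsilon(u)$ in the drift, in the logistic term $(r-\mu a_\varepsilon(u))u$, and in the source $\beta a_\varepsilon(u)$. Integrating the $u_\varepsilon$-equation over $\Omega_T$ then gives
\[
\mu\int_{\Omega_T}\frac{u_\varepsilon^2}{1+\varepsilon u_\varepsilon}\,dx\,dt\le \|u_0\|_{L^1(\Omega)}+rT\sup_{t\in[0,T]}\|u_\varepsilon(t)\|_{L^1(\Omega)} .
\]
Since $a_\varepsilon(u)^2\le u^2/(1+\varepsilon u)$, the $v$-source is bounded in $L^2(\Omega_T)$ uniformly in $\varepsilon$. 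Testing the $v$-equation with $v$ and with $-\Delta v$ bounds $v_\varepsilon$ in $L^\infty(0,T;H^1(\Omega))\cap L^2(0,T;H^2(\Omega))$. Ladyzhenskaya then gives $\int_0^T\|\nabla v_\varepsilon\|_{L^4(\Omega)}^4\,dt\le C$. With this, your first option closes: setting $w=u_\varepsilon^2$ and applying Gagliardo--Nirenberg yields
\[
\frac{d}{dt}\|u_\varepsilon\|_{L^4}^4\le C\bigl(1+\|\nabla v_\varepsilon\|_{L^4}^4\bigr)\|u_\varepsilon\|_{L^4}^4,
\]
with a coefficient integrable in time.

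With your untruncated source $\beta u_\varepsilon$, this chain is unavailable. The integrated damping controls only $\int_{\Omega_T}u_\varepsilon T_\varepsilon(u_\varepsilon)$, which does not bound $\int_{\Omega_T}u_\varepsilon^2$ uniformly in $\varepsilon$.

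The same truncation also repairs your Schauder step. With source $\beta\bar u$, the bound on $\Phi(\bar u)$ grows with $\|\bar u\|$, so $\Phi(B)\subset B$ is not automatic. Moreover, the $X_4$ estimate you quote uses $\|\bar u\|_{L^4(\Omega_T)}$ while $B\subset L^2(\Omega_T)$.
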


The following result is the second main result of the article.

\begin{theorem}[\bf Existence of optimal controls] \label{thm:existence}
    Let $u_0 \in L^4(\Omega)$ and  $v_0 \in W_{\textbf{n}}^{\frac{3}{2},4}(\Omega)$ be such that $u_0 \geq 0$ and $v_0 \geq 0$ a.e. in $\Omega$. 
    Then, there exists at least one optimal control $\tilde{f} \in \mathcal{F}$ and corresponding state solution $(\tilde{u}, \tilde{v})$ satisfying Definition \ref{def:weak-solution} such that $(\tilde{u}, \tilde{v}, \tilde{f})$ solves the optimal control problem \eqref{1}-\eqref{cont-prob}.
\end{theorem}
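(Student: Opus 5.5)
We will prove Theorem \ref{thm:existence} by the direct method of the calculus of variations, using Theorem \ref{mainresult1} both for the nonemptiness of $\mathcal S_{\rm ad}$ and for uniform bounds. First, $\mathcal F$ is nonempty (e.g. $f=\xi_1\in L^\infty(\Omega_T)$), so $\mathcal S_{\rm ad}\neq\emptyset$. Since $J\ge 0$, the infimum $j:=\inf_{\mathcal S_{\rm ad}}J$ is finite. We take a minimizing sequence $(u_n,v_n,f_n)\in\mathcal S_{\rm ad}$ with $J(u_n,v_n,f_n)\to j$. Because $\xi_1\le f_n\le\xi_2$, we have $\|f_n\|_{L^\infty(\Omega_T)}\le \max(\|\xi_1\|_\infty,\|\xi_2\|_\infty)$ uniformly in $n$. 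Hence the constant $K$ of Theorem \ref{mainresult1} is independent of $n$, and $(u_n,v_n)$ is bounded in $W_4\times X_4$.

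\textbf{Extracting a limit.} After passing to a subsequence we get:
\begin{itemize}
\item $f_n\rightharpoonup \tilde f$ weakly-$*$ in $L^\infty(\Omega_T)$, hence weakly in $L^4(\Omega_T)$;
\item $u_n\rightharpoonup\tilde u$ weakly in $L^2(0,T;H^1(\Omega))$ and weakly-$*$ in $L^\infty(0,T;L^4(\Omega))$, with $\partial_tu_n\rightharpoonup\partial_t\tilde u$ weakly in $L^2(0,T;(H^1)')$;
\item $v_n\rightharpoonup\tilde v$ weakly in $L^4(0,T;W^{2,4}(\Omega))$, with $\partial_tv_n\rightharpoonup\partial_t\tilde v$ weakly in $L^4(\Omega_T)$.
\end{itemize}
The set $\mathcal F$ is convex and closed in $L^4$, hence weakly closed, so $\tilde f\in\mathcal F$. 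By the Aubin--Lions lemma, $u_n\to\tilde u$ strongly in $L^2(\Omega_T)$. Interpolating with the $L^\infty(L^4)$ bound gives strong convergence in $L^p(\Omega_T)$ for every $p<4$. Similarly, since $W^{2,4}\hookrightarrow\hookrightarrow W^{1,4}$ compactly, we get $v_n\to\tilde v$ strongly in $L^4(0,T;W^{1,4}(\Omega))$, so $\nabla v_n\to\nabla\tilde v$ strongly in $L^4(\Omega_T)$. Nonnegativity passes to the limit. The initial data are preserved by the weak continuity of the time-trace maps.

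\textbf{Passing to the limit in the state equation.} The linear terms in \eqref{W1}--\eqref{W2} pass to the limit directly. The logistic term $u_n^2\varphi$ converges because $u_n\to\tilde u$ strongly in $L^3$ and $\varphi\in L^4(W^{1,4})\subset L^4(L^\infty)$ in two dimensions. This step is the main obstacle: the trilinear term $\int f_nu_n\nabla v_n\cdot\nabla\varphi$, since $f_n$ converges only weakly. The product $u_n\nabla v_n$ converges strongly in $L^{q}(\Omega_T)$ for some $q>4/3$: take $u_n\to\tilde u$ in $L^{p}$ with $p<4$ close to $4$, and $\nabla v_n$ bounded in $L^\infty(0,T;L^\infty)$ via $W^{3/2,4}\hookrightarrow W^{1/2,4}$. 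If needed, the $L^\infty$ gradient bound could instead be replaced by strong convergence of $\nabla v_n$ in $L^r$ with $r$ large, using the $C([0,T];W^{3/2,4})$ bound and compactness in $C([0,T];W^{1,s})$. Since $\nabla\varphi\in L^4$, we obtain $u_n\nabla v_n\cdot\nabla\varphi\to\tilde u\nabla\tilde v\cdot\nabla\varphi$ strongly in $L^1$. We then pair this against $f_n\rightharpoonup\tilde f$ weakly-$*$ in $L^\infty$, a weak--strong product, which converges. We may also first argue with smooth test functions and conclude by density. Thus $(\tilde u,\tilde v)$ is a weak solution with control $\tilde f$. By the uniqueness in Theorem \ref{mainresult1}, $(\tilde u,\tilde v,\tilde f)\in\mathcal S_{\rm ad}$.

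\textbf{Lower semicontinuity of $J$.} We check each term of $J$ along the subsequence:
\begin{itemize}
\item The tracking term converges, by the strong $L^2$ convergence of $u_n$.
\item The term $\gamma_f\|f\|^4_{L^4}$ is convex and continuous, hence weakly lower semicontinuous.
\item For the taxis term, $f_nu_n\nabla v_n\rightharpoonup\tilde f\tilde u\nabla\tilde v$ weakly in $L^2(\Omega_T)$: it is bounded in $L^2$ by the uniform bounds, and it converges in distributions by the weak--strong argument above. The $L^2$ norm squared is then weakly lower semicontinuous.
\item For the terminal term, $u_n(T)$ is bounded in $L^4(\Omega)$. Using $W_4\hookrightarrow C([0,T];(H^1)')$ and the test-function identity, $u_n(T)\rightharpoonup\tilde u(T)$ weakly in $L^2(\Omega)$, and the norm is lower semicontinuous.
\end{itemize}
Therefore $J(\tilde u,\tilde v,\tilde f)\le\liminf J(u_n,v_n,f_n)=j$, so $(\tilde u,\tilde v,\tilde f)$ is optimal.
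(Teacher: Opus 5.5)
Your proposal is correct and follows essentially the same direct-method argument as the paper. Both take a minimizing sequence with uniform $W_4\times X_4$ bounds from Theorem~\ref{mainresult1}, use Aubin--Lions compactness, pass to the limit in the trilinear term $f_nu_n\nabla v_n$ by pairing weak and strong convergence, and use weak lower semicontinuity of each term of $J$. One small correction: in two dimensions $W^{\frac12,4}(\Omega)$ is the critical case and does not embed into $L^\infty(\Omega)$, so the claimed $L^\infty(0,T;L^\infty(\Omega))$ bound on $\nabla v_n$ is false. Your stated fallback repairs this, and so does the simpler observation that $u_n\to\tilde u$ in $L^2(\Omega_T)$ together with $\nabla v_n\to\nabla\tilde v$ in $L^4(\Omega_T)$ already gives $u_n\nabla v_n\to\tilde u\nabla\tilde v$ in $L^{4/3}(\Omega_T)$.
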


\begin{remark}
    {\em 
        In the system~\eqref{1}, the chemotactic sensitivity \(f\) has been introduced as a scalar function depending on space and time.  
        For a finer control over the chemical gradient, one could instead consider \(f\) as a square matrix-valued control of the form  
        \[
            f(x,t) =
            \begin{pmatrix}
                f_{1}(x,t) & 0 \\
                0 & f_{2}(x,t)
            \end{pmatrix},
        \]
        acting on \(\nabla v\) in the advection term \(\nabla \cdot \big( u\,f\,\nabla v \big)\).  
        From an analytical standpoint, the existence and uniqueness of optimal controls in this matrix-valued setting can be established using the same functional framework as in the scalar case studied in this paper, by treating \(f\) as an element of \((L^4(\Omega_T))^{2\times 2}\) with suitable box constraints on each entry.
        
        Intuitively, this generalization allows direction-dependent modulation of the chemotactic flux, enabling more precise steering of the population density \(u\) in both the magnitude and direction of its response to \(\nabla v\).  
        Such flexibility could be advantageous in applications where anisotropic or cross-directional chemotactic effects are relevant.
    }
\end{remark}

\subsection{Novelty, motivation and background}

Chemotaxis, the directed movement of biological cells in response to chemical gradients, is a central process in many biological and medical phenomena such as embryogenesis, wound healing, immune response, and tumor invasion. The mathematical framework most widely used to describe these processes is the Keller–Segel (KS) system and its numerous extensions. These models capture complex spatiotemporal behaviors such as aggregation, blow-up, pattern formation, and dispersion. 
From a control-theoretic viewpoint, chemotaxis offers a rich context for investigating nonlinear dynamics driven by coupled reaction–diffusion processes. Recent years have seen growing interest in the control and optimization of chemotaxis-type models for biological regulation or targeted therapies.

The standard Keller–Segel system couples the cell density \(u(x,t)\) and the concentration of a chemical substance \(v(x,t)\) and is given by
\begin{equation}\label{ks}
    \begin{cases}
        \partial_t u - D_u \Delta u + \nabla \cdot (u \chi \nabla v) = 0\;\;&\mbox{ in }\Omega_T,\\
        \partial_t v - D_v \Delta v + \alpha v = \beta u &\mbox{ in }\Omega_T,
    \end{cases}
\end{equation}
with homogeneous Neumann boundary conditions complemented with initial conditions. The term \(\chi\) denotes the chemotactic sensitivity. The mathematical analysis of these equations has been widely developed in the last three decades. Without logistic damping, solutions may exhibit finite-time blow-up in dimension two or higher. 
For instance, Winkler \cite{Winkler2010} and Liu–Lorz \cite{LiuLorz2010} established the existence of weak and classical solutions in \(2D\) under subcritical conditions on the initial data, while blow-up can occur for large masses.  

The present work introduces a novel control paradigm. Instead of modifying the external chemical gradients, we act directly on the \emph{chemotactic sensitivity} of the cells through a distributed control \(f\). 

This formulation is consistent with the Keller–Segel framework, where the chemotactic sensitivity coefficient \( \chi \) in \eqref{ks} governs the strength of cell response to chemical cues. 
As highlighted in the comprehensive review by Horstmann~\cite{horstmann2003from}, the functional form of \( \chi \) plays a decisive role in determining aggregation, pattern formation, and stability in the chemotaxis models. 

Our control \( f \) can thus be interpreted as a space–time dependent modulation of this sensitivity, providing a biologically meaningful mechanism to regulate directed cell migration. 
In particular, the controlled chemotactic drift may appear in different biologically relevant forms. 
A typical example is
\(
\nabla \cdot \!\left( f\, u \, \nabla v \right),
\)
representing a direct modulation of the chemotactic response amplitude.
However, in some biologically inspired models, the chemotactic sensitivity saturates at high signal concentrations, leading to a rational-type drift of the form $\displaystyle
\nabla \cdot \!\left( \frac{f}{\gamma + v} \, u \, \nabla v \right)$,
where $\gamma > 0$ represents a saturation threshold (see, e.g., \cite{ARUMUGAM2020103090, LankeitWinkler2017}).

Nonlinear or rational sensitivity laws have been analyzed in the framework of global existence and boundedness of weak solutions for Keller–Segel systems with saturation effects.
These formulations remain consistent with experimental observations of receptor saturation in chemotactic signaling pathways (see, e.g., \cite{Teicher2010, OcalSahin2012, karp2008mesenchymal, Mezzapelle2022}, 
and they preserve the Keller–Segel structure while incorporating biologically realistic response regulation.
In particular, Teicher \& Fricker~\cite{Teicher2010} showed that the CXCL12/CXCR4 signaling axis is a major regulator of cell chemotaxis, survival, and proliferation. 
This pathway modulates the chemotactic sensitivity of cancer cells and plays a crucial role in tumor invasion, metastasis, and persistence, highlighting its importance as a therapeutic target.  

Based on these biological insights, we formulated in \eqref{1} a controlled Keller–Segel-type system~ \cite{ARUMUGAM2020103090,horstmann2003from} in which the control function \(f\) modulates the chemotactic sensitivity either linearly or through a saturating mechanism.
The addition of the logistic source term \(r u - \mu u^2\) in \eqref{1} plays a crucial regularizing role by preventing blow-up and guaranteeing the global-in-time existence of bounded weak solutions. 

Lankeit \cite{Lankeit2016} analyzed global existence and asymptotic stability for the logistic Keller–Segel model in two dimensions. In three dimensions, further extensions were developed by Tello \& Winkler \cite{TelloWinkler2012}.
These results confirm that the logistic term stabilizes the population density and allow the derivation of global attractors.

\subsection{Existing results on controlled Keller–Segel (KS) systems}
Although the analysis of uncontrolled KS models is now mature, the optimal control of such nonlinear chemotaxis systems remains relatively recent. The control can act linearly or bilinearly, and on different components (cell equation or chemical equation), leading to distinct mathematical difficulties.

Linear control problems for KS systems have been investigated in low-dimensional cases. Ryu \& Yagi \cite{RyuYagi1999} considered a linear distributed control acting on a chemical equation in 2D chemo-attraction models, proving the existence of optimal controls and deriving first-order conditions.
López-Ríos \& Villamizar-Roa \cite{LopezRios2021} analyzed a 3D chemotaxis–Navier–Stokes system with distributed linear controls acting on the chemical and velocity equations, showing the existence of weak and strong solutions as well as an optimal control and its first-order necessary optimality conditions.

Bilinear control problems, where the control multiplies a state variable (for example \(f v\) or \(f u\)), are more realistic for biological interpretation but analytically much more delicate due to the nonlinear couplings.
The work of Braz et al. \cite{optimalbilinear_KellerSegel} uses a bilinear control on the chemical concentration $v$. In that work, the system evolves in two spatial dimensions, the reaction term is also given by a logistic equation and they show the existence of optimal controls and compute the optimality system using Lagrange multipliers.
Their previous work \cite{2D_ChemoRepulsion_FGG} studies the bilinear control associated to a 2D chemo-repulsion model when the logistic term is not present.
Let us remark that in \cite{2D_ChemoRepulsion_FGG}, the bilinear control $f$ belongs to $L^4(\Omega_T)$. The 3D case is treated in \cite{3D_ChemoRepulsion_FGG}. 

In contrast, in the present work the control acts directly on the chemotactic 
sensitivity in the cell-density equation. Thus, the control is coupled with 
the chemotactic drift $u\nabla v$, whereas in \cite{optimalbilinear_KellerSegel,3D_ChemoRepulsion_FGG} it is 
coupled with the chemical variable $v$ in the signal equation. Consequently, the 
two models involve different controlled nonlinearities and require distinct analytical 
frameworks.

A related numerical work on optimal control of the KS model using collocation methods with splines is presented in \cite{Weiwei}. Some basic references on collocation methods using splines are \cite{siam_multivariate-22,Lai-07}.

In the chemo-repulsion case, Guillén-González et al. \cite{Guillen2018} considered a 3D bilinear optimal control problem
with linear production. They first established the existence of weak solutions for the uncontrolled problem, introduced a regularity criterion ensuring the passage to strong solutions (needed for differentiability of the control-to-state mapping), and then derived a first-order optimality system via a Lagrange multiplier argument in Banach spaces.
More recently, Braz e Silva et al. \cite{optimalbilinear_KellerSegel} analyzed a bilinear optimal control problem for the 2D KS model with a logistic term where the control acts multiplicatively in the chemical equation. The authors in \cite{optimalbilinear_KellerSegel} established the existence of a unique weak solution for every \(f \in L^{2+\alpha}(\Omega_T)\),
for $\alpha>0$ small enough, and proved the existence of an optimal control minimizing a quadratic cost functional, and derived the associated first-order optimality system.
Bilinear control structures have been further examined in chemo-repulsion models featuring nonlinear production terms of the form $u^p$ $(1<p\le2)$. In particular, Mallea-Zepeda \& Medina \cite{Mallea2023} studied an optimal control problem for a 2D parabolic-elliptic system combining nonlinear chemical production and bilinear control effects. Nonetheless, the interplay between logistic damping, sensitivity modulation, and weakly regularized dynamics has yet to be addressed in full generality.

Beyond optimal control, controllability results for chemotaxis systems have also been explored.
Okposo \& Willie \cite{Okposo} established well-posedness, finite-time blow-up bounds, and null or approximate controllability for the classical KS model using Carleman estimates.
In a related work, Guo \& Zhang \cite{GUO2014106} proved the local null controllability for a parabolic-elliptic chemotaxis system via a linearization approach.

Despite these advances in the field, most existing control formulations act indirectly on the chemical signal or through external sources. 
In contrast, in our model we propose to regulate the cell migration process more fundamentally by modulating the chemotactic sensitivity itself.

\subsection{Structure of the paper}
The rest of the paper is structured as follows.
In Section \ref{Func-Spa} we recall the functional setup and notations used throughout the paper. 
In Section \ref{sec:proof_theorem_wellp} we give the proof of  Theorem \ref{mainresult1}. Concerning the optimal control problem, in Section \ref{sec:existence_OC} we give the proof of Theorem \ref{thm:existence}, and in Section \ref{sec:lagrange_multipliers} we describe briefly the Lagrange multiplier method and use it as a tool to determine the necessary first-order optimality system. Moreover we describe a numerical algorithm in order to find the optimal control and the evolution of the controlled system. In Section \ref{sec:numerical_implementation} we describe the numerical implementation using spatial discretization given by a collocation method using B-splines and a Runge-Kutta method to numerically integrate the time evolution. The numerical examples are presented in Section \ref{sec:numerical_examples} and at the end of the paper we give four appendices containing technical proofs of some results that are crucial in the proofs of the main results.

\section{Notations, functional setup and some known-results}\label{Func-Spa}
We fix some notations, introduce the function spaces and recall some well-known results that are used throughout the paper. 

Let $X$ and $Y$ be two Banach spaces. If the injection of $X$ to $Y$ is continuous, 
we use the notation $X\hookrightarrow Y$. If the continuous injection is dense, then we denote $X\emb Y$. In addition, if the injection is compact we shall denote $X\embc Y$. We use the symbol $\to$ to indicate strong convergence, $\rightharpoonup$ for weak convergence and $\overset{\ast}{\rightharpoonup}$ for weak-$*$ convergence.

Next, let $\Omega\subset\mathbb R^N$ ($N\ge 1$) be a bounded open set. For every $m\in\mathbb N$ and $1\le p\le \infty$, we denote by $W^{m,p}(\Omega)$ the classical Sobolev spaces. If $m=1$ and $p=2$ we set $H^1(\Omega):=W^{1,2}(\Omega)$ which is a Hilbert space. We also denote by $(H^{1}(\Omega))'$ the dual space of $H^1(\Omega)$ with respect to the pivot space $L^2(\Omega)$ so that we have the continuous and dense injections
$$
    H^1(\Omega)\emb L^2(\Omega)\emb (H^{1}(\Omega))'.
$$

Let $0<s<1$ and $1\le p<\infty$ be real numbers. We define the fractional order Sobolev space
$$
    W^{s,p}(\Omega):=\left\{u\in L^p(\Omega):\; \int_{\Omega}\int_{\Omega}\frac{|u(x)-u(y)|^p}{|x-y|^{N+sp}}\;dxdy<\infty\right\},
$$
that we endow with the norm
\begin{equation}\label{norm}
    \|u\|_{W^{s,p}(\Omega)}:=\left(\int_{\Omega}|u|^p\;dx+ \int_{\Omega}\int_{\Omega}\frac{|u(x)-u(y)|^p}{|x-y|^{N+sp}}\;dxdy \right)^{\frac 1p}.
\end{equation}
With this definition, $W^{s,p}(\Omega)$ is a Banach space.
When $s>1$ and is not an integer, we write $s=m+\sigma$, where $m$ is an integer and $0<\sigma<1$. In this case, the space $W^{s,p}(\Omega)$ is defined by
$$W^{s,p}(\Omega):=\left \{u\in W^{m,p}(\Omega):\; D^\alpha u\in W^{\sigma,p}(\Omega) \;\mbox{ for all } \alpha \mbox{ such that } |\alpha|=m\right\},$$
which is a Banach space endowed with the norm
\begin{equation*}
    \|u\|_{W^{s,p}(\Omega)}:=\left(\|u\|_{W^{m,p}(\Omega)}^p+\sum_{|\alpha|=m}\|D^\alpha u\|_{W^{\sigma,p}(\Omega}^p\right)^{\frac 1p}.
\end{equation*}
With this definition, if $s=m$ is an integer, then $W^{s,p}(\Omega)$ coincides with $W^{m,p}(\Omega)$. 
We shall also denote by $W^{-s,q}(\Omega)$ the dual space of $W^{s,p}(\Omega)$ where $\frac 1p+\frac 1q=1$.
Moreover, if $\Omega$ is bounded and has a Lipschitz continuous boundary and $s_1 > s_2$, we have the compact embedding $W^{s_1,p}(\Omega) \embc W^{s_2,p}(\Omega)$.

For $\Omega\subset\mathbb R^N$ a bounded smooth open set, $s>1$ and $1\le p<\infty$, we use the notation 
$$
    W_{\mathbf{n}}^{s,p}(\Omega)) := \{ u \in W^{s,p}(\Omega): \; \partial_\nu u:=\nabla u\cdot\vec{n} = 0 \text{ on } \partial\Omega \}.
$$ 
For more details on classical and fractional order Sobolev spaces, we refer the interested reader to Di Nezza, Palatucci \& Valdinoci \cite{NPV}, Grisvard \cite{Gris}, Warma \cite{War-P} and their references.

Next, for $\alpha>0$ small enough, we use the notation
\begin{equation*}
    L^{p+}(\Omega):= L^{p+\alpha}(\Omega),\qquad  W^{s+,p+}(\Omega):=W^{s+\alpha,p+\alpha}(\Omega),
\end{equation*}
and
\begin{equation*}
    L^{p-}(\Omega):= L^{p-\alpha}(\Omega),\qquad  W^{s-,p-}(\Omega):=W^{s-\alpha,p-\alpha}(\Omega).
\end{equation*}

We recall some results that are used throughout the paper. We formulate the results for a smooth bounded open set $\Omega\subset\mathbb R^2$.
\begin{itemize}
    \item The two-dimensional Ladyzhenskaya/Gagliardo--Nirenberg inequality
    (see e.g. \cite{ladyzhenskaya1969}) is given by the following: there is a constant $C>0$ such that for every
    \(u\in H^1(\Omega)\),
    \[
        \|u\|_{L^4(\Omega)}
        \leq
        C\|u\|_{L^2(\Omega)}^{1/2}
        \|u\|_{H^1(\Omega)}^{1/2}.
    \]
    
    \item The 2D-Sobolev embedding \cite[Section 5.6]{Evans-98}. The following continuous embeddings hold.
    \[
        \begin{cases}
            W^{1,p}(\Omega)\hookrightarrow L^q(\Omega),\quad 1\leq q\leq \frac{2p}{2-p} &\mbox{ if } 1\leq p<2,\\
            H^{1}(\Omega)\hookrightarrow L^q(\Omega), \qquad  1\leq q<\infty &.
        \end{cases}
    \]
    \item We shall also use the fractional critical
    embedding (see e.g. \cite[Theorem~6.10]{NPV}),
    \[
        W^{\frac 12,4}(\Omega)\hookrightarrow L^q(\Omega)
        \qquad \text{for every } 1\leq q<\infty.
    \]
\end{itemize}

The following result known as the Aubin-Lions Lemma \cite[Lemma 7.7]{roubicek2013nonlinear} (see also \cite{simon1986compact}) will be frequently used. 

\begin{lemma}\label{lem-21}
    Let $X_0$, $X$ and $X_1$ be Banach spaces with $X_0 \subset X \subset X_1$. Suppose that $X_0\embc X$ and that $X\hookrightarrow X_1$. For $1 \leq p,q \leq \infty$, let
    $$
        W := \{ u \in L^p(0,T;X_0) \: : \: \partial_t u \in L^q(0,T;X_1) \}.
    $$
    If $1\le p < \infty$, then we have the compact embedding $W\embc L^p(0,T;X)$. If $p=\infty$ and $q>1$ then we also have the compact embedding $W\embc C([0,T];X)$.
\end{lemma}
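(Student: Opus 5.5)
The plan is to deduce both compactness statements from Ehrling's interpolation inequality combined with a time-translation (Simon-type) compactness criterion. First I will establish Ehrling's lemma: since $X_0\embc X\hookrightarrow X_1$, for every $\eta>0$ there is $C_\eta>0$ such that
\[
\|w\|_X\le \eta\|w\|_{X_0}+C_\eta\|w\|_{X_1}\qquad\text{for all } w\in X_0 .
\]
This is proved by contradiction. If it failed for some $\eta$, we could normalize to get $w_n$ with $\|w_n\|_{X_0}=1$, $\|w_n\|_X>\eta$ and $\|w_n\|_{X_1}\to 0$. Compactness of $X_0\embc X$ gives a subsequence converging in $X$ to some $w$. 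Continuity of $X\hookrightarrow X_1$ then forces $w=0$, which contradicts $\|w\|_X\ge\eta$. I will also use implicitly that $X\hookrightarrow X_1$ is injective, since $X\subset X_1$.

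Next, take a bounded set $B\subset W$. Its elements are absolutely continuous with values in $X_1$, and $\|u(t+h)-u(t)\|_{X_1}\le \int_t^{t+h}\|\partial_t u\|_{X_1}\le h^{1-1/q}\|\partial_t u\|_{L^q(0,T;X_1)}$, so the time translates are uniformly small in $X_1$. For $q=1$ this bound degenerates, and I would instead use the equi-integrability-free version $\|\tau_h u-u\|_{L^1(0,T-h;X_1)}\le h\|\partial_t u\|_{L^1(0,T;X_1)}$.

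For the case $p<\infty$, I then show that $B$ is relatively compact in $L^p(0,T;X_1)$ via Simon's criterion. The two ingredients are:
\begin{itemize}
\item integrals $\int_{t_1}^{t_2}u\,dt$ lie in a bounded set of $X_0$, hence in a relatively compact set of $X_1$;
\item time translates tend to $0$ uniformly in $L^p(0,T-h;X_1)$, which follows from the bound above interpolated with the $L^p(0,T;X_0)$ bound.
\end{itemize}
Alternatively, I could mollify in time and apply Arzelà--Ascoli to the mollified family in $C([0,T];X_1)$. Once $B$ is precompact in $L^p(0,T;X_1)$ and bounded in $L^p(0,T;X_0)$, Ehrling's inequality gives
\[
\|u-w\|_{L^p(X)}\le 2\eta M+C_\eta\|u-w\|_{L^p(X_1)}.
\]
Hence Cauchy sequences in $L^p(X_1)$ are Cauchy in $L^p(X)$, which yields $W\embc L^p(0,T;X)$.

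For the case $p=\infty$, $q>1$, the family $B$ is bounded in $C([0,T];X_1)$ and, by the estimate above with exponent $1-1/q>0$, it is equi-Hölder in $X_1$. For each fixed $t$, the set $\{u(t)\}$ is bounded in $X_0$; this holds because $u\in L^\infty(X_0)\cap C(X_1)$ is weakly continuous into $X_0$ if $X_0$ is reflexive, or more generally one argues with time averages. That set is therefore relatively compact in $X$, and hence in $X_1$. The vector-valued Arzelà--Ascoli theorem then gives precompactness in $C([0,T];X_1)$. Applying Ehrling's inequality pointwise in $t$, with $\sup_t\|u(t)\|_{X_0}\le M$, upgrades this to precompactness in $C([0,T];X)$.

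The main obstacle is the $p<\infty$ step: proving relative compactness in $L^p(0,T;X_1)$ using only the time-derivative bound, especially for $q=1$. There the translation estimate holds only in $L^1$, and I would handle this by first proving compactness in $L^1(0,T;X_1)$ and then interpolating with the $L^p(X_0)$ bound via Ehrling. Alternatively, I would simply invoke Simon's Theorem 5 from the cited reference for this step.
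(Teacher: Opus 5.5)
The paper does not prove this lemma; it only cites Roub\'{\i}\v{c}ek's Lemma 7.7 and Simon. Your strategy, Ehrling's inequality plus a translation/Arzel\`a--Ascoli compactness argument in $X_1$, is essentially Simon's. Your Ehrling lemma, the case $p<\infty$ with $q>1$, and the final Ehrling upgrade are fine, but two steps fail as written.

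\textbf{The case $q=1$, $p>1$.} Interpolating the $L^1$ translate estimate with the $L^p(0,T;X_0)$ bound only yields smallness in $L^r$ for $r<p$, never in $L^p$ itself. The same holds if you first prove compactness in $L^1(0,T;X_1)$ and upgrade it to $L^1(0,T;X)$ by Ehrling. Think of $n^{1/p}\mathbf{1}_{(0,1/n)}$, which is bounded in $L^p$ and tends to $0$ in $L^1$. What you need is an $L^\infty$-in-time bound, and your own display supplies it. At $q=1$ it does not degenerate; it gives $\|\tau_hu-u\|_{L^\infty(0,T-h;X_1)}\le\|\partial_tu\|_{L^1(0,T;X_1)}$. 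Interpolating this with $\|\tau_hu-u\|_{L^1(0,T-h;X_1)}\le h\|\partial_tu\|_{L^1(0,T;X_1)}$ gives $\|\tau_hu-u\|_{L^p(0,T-h;X_1)}\le h^{1/p}\|\partial_tu\|_{L^1(0,T;X_1)}$. This is exactly the translate hypothesis of Simon's criterion in $L^p(0,T;X_1)$. Citing Simon's Theorem 5 does not bypass the issue, since that theorem assumes translates are small in $L^p(0,T-h;X_1)$.

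\textbf{The case $p=\infty$.} Unless $X_0$ is reflexive, $u(t)$ need not belong to $X_0$ for every $t$. So the claims ``$\{u(t)\}$ is bounded in $X_0$'' and ``$\sup_t\|u(t)\|_{X_0}\le M$'' are false in general. For instance, take $X_0=C^1([0,1])$, $X=X_1=C([0,1])$ and $u(t)(x)=\sqrt{(x-1/2)^2+t^2}$. Then $u\in L^\infty(0,T;X_0)$ and $\partial_tu\in L^\infty(0,T;X_1)$, but $u(0)=|x-1/2|\notin X_0$.

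The repair is the time-average argument you mention only in passing. Let $K_M$ be the closure in $X$ of the $X_0$-ball of radius $M$; it is compact in $X$. The averages $h^{-1}\int_t^{t+h}u\,ds$ lie in that ball and converge to $u(t)$ in $X_1$. Compactness of $K_M$ in $X$ and uniqueness of limits in $X_1$ then give $u(t)\in K_M$ for every $t$. This supplies the pointwise relative compactness needed for Arzel\`a--Ascoli in $C([0,T];X_1)$. Moreover, Ehrling's inequality passes by continuity to $K_{2M}\supset K_M-K_M$, so $\|u(t)-w(t)\|_X\le 2\eta M+C_\eta\|u(t)-w(t)\|_{X_1}$ holds for \emph{all} $t$. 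That gives the upgrade to $C([0,T];X)$. It also shows that each $u\in W$ is continuous into $X$, which is needed for $W\subset C([0,T];X)$ to make sense.
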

 
Next, we recall known regularity results on linear parabolic equations which will be crucial in the proofs of our main results. Consider the linear system
\begin{equation}\label{eq:heat_neumann}
    \begin{cases}
        \partial_t u - \Delta u = f & \text{ in } \Omega_T, \\
        \partial_\nu u = 0 & \text{ on } \Gamma_T, \\
        u(\cdot,0) = u_0 & \text{ in } \Omega.
    \end{cases}
\end{equation}
The following result taken from \cite{feireisl2009singular} provides the optimal ($L^p-L^q$)-regularity for solutions of the heat equation \eqref{eq:heat_neumann}.

\begin{theorem}[\bf Maximal ($L^p - L^q$)-regularity]
    Let \(\Omega \subset \mathbb{R}^N\) ($N\ge 1$) be a bounded domain of class \(C^2\), and \(1 < p, q < \infty\). Suppose that
    $f \in L^p(0, T; L^q(\Omega))$, and $u_0 \in X_{p,q}$,
    where \(X_{p,q}\) is defined via a real interpolation (see e.g. \cite[page 344]{feireisl2009singular}) as
    \[
        X_{p,q} := \bigg\{ L^q(\Omega); D(\Delta_N) \bigg\}_{1-\frac 1p,p},
        \text{
        with } D(\Delta_N) = \bigg\{ v \in W^{2,q}(\Omega):\; \nabla v \cdot \mathbf{n}|_{\partial \Omega} = 0 \bigg\}.
    \]
    Then, the problem \eqref{eq:heat_neumann} admits a unique weak solution \(u\) satisfying the following:
    \[
        u \in L^p(0, T; W^{2,q}(\Omega))\cap C([0, T]; X_{p,q}), \quad \partial_t u \in L^p(0, T; L^p(\Omega)).
    \]
    Moreover, there is a constant \(C = C(p, q, \Omega, T) > 0\) such that for a.e. \(t \in [0, T]\),
    \[
        \|u(\cdot,t)\|_{X_{p,q}} + \|\partial_t u\|_{L^p(0,T; L^q(\Omega))} + \|\Delta u\|_{L^p(0,T; L^q(\Omega))} \leq C \left( \|f\|_{L^p(0,T; L^q(\Omega))} + \|u_0\|_{X_{p,q}} \right).
    \]
    \label{regular}
\end{theorem}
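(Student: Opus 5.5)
The plan is to split the problem by linearity into a part carrying the forcing with zero initial datum and a part carrying the initial datum with no forcing. Both pieces are then handled with the operator-theoretic machinery of the Neumann Laplacian on $L^q(\Omega)$. I take the statement with $\partial_t u\in L^p(0,T;L^q(\Omega))$, since that is the space appearing in the estimate.

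\emph{Step 1: the Neumann Laplacian.} Let $A_q:=-\Delta_N$ with domain $D(\Delta_N)$ as in the statement. Since $\partial\Omega\in C^2$, the Agmon--Douglis--Nirenberg $L^q$-estimates for the Neumann problem give the following for $\lambda$ in a sector $\Sigma_\theta$ with $\theta>\pi/2$:
\[
|\lambda|\,\|w\|_{L^q(\Omega)}+\|w\|_{W^{2,q}(\Omega)}\le C\|(\lambda+1+A_q)w\|_{L^q(\Omega)},\qquad w\in D(\Delta_N).
\]
Hence $1+A_q$ is sectorial of angle $<\pi/2$. Consequently $-A_q$ generates a bounded analytic semigroup $e^{-tA_q}$ on $L^q(\Omega)$. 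The constant eigenvalue $0$ is harmless on the finite interval $[0,T]$, because we may work with $A_q+1$ and absorb $e^{t}$ factors.

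\emph{Step 2: forcing with zero initial datum.} I solve $u_1(t)=\int_0^t e^{-(t-s)A_q}f(s)\,ds$. The core claim is
\[
\|A_qu_1\|_{L^p(0,T;L^q(\Omega))}\le C\|f\|_{L^p(0,T;L^q(\Omega))}.
\]
The first approach I would try is Weis' operator-valued Mikhlin theorem. Since $L^q$ is UMD for $1<q<\infty$, it suffices that $1+A_q$ be $\mathcal R$-sectorial of angle $<\pi/2$. An alternative is the Dore--Venni route via bounded imaginary powers.

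$\mathcal R$-sectoriality follows from either of two facts:
\begin{itemize}
\item the semigroup $e^{-tA_q}$ is positive, contractive on all $L^r$ and has Gaussian kernel bounds for smooth bounded domains, which yields a bounded $H^\infty$-calculus;
\item the semigroup is sub-Markovian and symmetric, so Stein's/Lamberton's theorem gives maximal regularity directly.
\end{itemize}
Once $A_qu_1$ is controlled, the equation gives $\partial_tu_1=f-A_qu_1$ in $L^p(0,T;L^q(\Omega))$. Elliptic regularity then upgrades $\|A_qu_1\|_{L^q}$ to $\|u_1\|_{W^{2,q}}$. This step is the main obstacle; everything else is soft.

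\emph{Step 3: initial datum and continuity in time.} Set $u_2(t)=e^{-tA_q}u_0$. By the trace characterization of real interpolation (Lions--Peetre), $u_0\in (L^q,D(A_q))_{1-1/p,p}=X_{p,q}$ is equivalent to
\[
t\mapsto A_qe^{-tA_q}u_0\in L^p(0,T;L^q(\Omega)),
\]
with equivalent norms. This gives the bounds on $\partial_tu_2$ and $\Delta u_2$. Finally, the same trace theorem shows the embedding
\[
\{w\in L^p(0,T;D(A_q)):\ \partial_tw\in L^p(0,T;L^q(\Omega))\}\hookrightarrow C([0,T];X_{p,q}),
\]
which yields $u=u_1+u_2\in C([0,T];X_{p,q})$ together with the pointwise-in-time bound on $\|u(\cdot,t)\|_{X_{p,q}}$.

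\emph{Uniqueness and the weak formulation.} Uniqueness follows because any weak solution with zero data solves $\partial_tw+A_qw=0$ in the mild sense, so $w=0$ by the semigroup representation. Collecting the estimates from Steps 2 and 3 gives the stated inequality, with $C=C(p,q,\Omega,T)$.
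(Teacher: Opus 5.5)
The paper does not prove this statement. It imports it from \cite{feireisl2009singular}, which itself rests on the abstract maximal-regularity theory, and then uses it as a black box. Your reading $\partial_t u\in L^p(0,T;L^q(\Omega))$ is the right one; the $L^p(0,T;L^p(\Omega))$ in the statement is a typo, as the estimate itself shows.

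Your proposal correctly reconstructs that underlying theory:
\begin{itemize}
\item the split into a forced part with zero initial datum and a free part;
\item $\mathcal R$-sectoriality of $1+A_q$ on the UMD space $L^q(\Omega)$, either via Gaussian bounds, a bounded $H^\infty$-calculus and Kalton--Weis, or directly via Lamberton's theorem for the symmetric sub-Markovian Neumann semigroup;
\item Weis' theorem;
\item the Lions--Peetre trace characterization of $X_{p,q}=(L^q,D(A_q))_{1-1/p,p}$ through $t\mapsto A_qe^{-tA_q}u_0\in L^p(0,T;L^q(\Omega))$, together with the embedding of the maximal-regularity class into $C([0,T];X_{p,q})$.
\end{itemize}
The citation buys the paper brevity. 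Your route makes explicit which structural properties are used. In particular it shows that $X_{p,q}$ is defined abstractly. So the paper's later identification $X_{4,4}=W^{3/2,4}_{\mathbf n}(\Omega)$ is an additional interpolation fact, not part of the theorem; it is valid here because $3/2>1+1/4$, so the Neumann condition survives interpolation.

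Three points to tighten if you write it out in full.
\begin{itemize}
\item Sectoriality of $1+A_q$ by itself only gives $\|e^{-tA_q}\|\le Ce^{t}$; boundedness of the semigroup comes from $L^r$-contractivity. As you note, this is irrelevant on $[0,T]$.
\item Gaussian bounds on a bounded domain carry a factor $e^{\omega t}$, so the $H^\infty$-calculus you obtain is for a shifted operator.
\item In the Lamberton route you must check that the generator of the $L^q$-extrapolated $L^2$ Neumann semigroup is exactly $-A_q$ with domain $D(\Delta_N)$. This is where the $C^2$ boundary and the ADN estimates of your Step 1 are genuinely needed.
\end{itemize}
Uniqueness in the stated class is then immediate: for a strong solution $w$ with zero data, $\frac{d}{ds}\bigl(e^{-(t-s)A_q}w(s)\bigr)=0$.
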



\section{Proof of the main Theorem \ref{mainresult1} }\label{sec:proof_theorem_wellp}

In order to give of the proof, we need some preparations. 

\subsection{Nonlinear $\varepsilon$–regularized system and associated auxiliary problems}
We introduce a regularized version of the original nonlinear system.  
For every fixed parameter $\varepsilon>0$, the regularization is performed through the truncation operator
\[
    a_\varepsilon(s):=\frac{s}{1+\varepsilon|s|},
\]
which is a Lipschitz continuous function that satisfies $|a_\varepsilon(s)|\le \min\{|s|,\varepsilon^{-1}\}$, and preserves positivity in the sense that $a_\varepsilon(s)\,s\ge0$.

We fix positive constants $D_u,D_v,\alpha,\beta,\mu,r$ and we assume that
\[
    f\in L^\infty(\Omega_T),\quad
    u_0\in L^4(\Omega),\quad
    v_0\in W_{\mathbf{n}}^{\frac{3}{2},4}(\Omega) \mbox{ with } u_0,v_0\ge 0  \mbox{ a.e. in }\Omega.
\]

For each $\varepsilon\in (0,1]$, we introduce the following $\varepsilon$–regularized system:
\begin{equation}\label{reg-eps}
    \begin{cases}
        \partial_t u_\varepsilon - D_u\Delta u_\varepsilon
        + \nabla\!\cdot\!\bigl(f\,a_\varepsilon(u_\varepsilon)\,\nabla v_\varepsilon\bigr)
        = \bigl(r-\mu\,a_\varepsilon(u_\varepsilon)\bigr)\,u_\varepsilon
        &\text{ in } \Omega_T,\\
        \partial_t v_\varepsilon - D_v\Delta v_\varepsilon + \alpha v_\varepsilon
        = \beta\,a_\varepsilon(u_\varepsilon)
        &\text{ in } \Omega_T,\\
        \partial_\nu u_\varepsilon = \partial_\nu v_\varepsilon = 0
        &\text{ on } \Gamma_T\\
        u_\varepsilon(0)=u_0,\; v_\varepsilon(0)=v_0
        &\text{ in }\Omega.
    \end{cases}
\end{equation}
This is the key nonlinear $\varepsilon$–regularized problem that we study the existence and regularity of solutions, and then we pass to the limit as $\varepsilon\to 0$ to obtain the solutions of the unregularized system \eqref{1}.

\subsection{Auxiliary frozen–coefficients problem}
\label{sec:frozen_coeff}
To construct the solutions of the system \eqref{reg-eps}, we first freeze the nonlinearity.
Given a function $\bar u\in L^4(\Omega_T)$, we set
\[
    a_\varepsilon:=a_\varepsilon(\bar u)\in L^\infty(\Omega_T)\cap L^4(\Omega_T)=L^\infty(\Omega_T).
\]
We then consider the following linear auxiliary system:
\begin{equation}\label{aux-frozen}
    \begin{cases}
        \partial_t u - D_u\Delta u
        + \nabla\!\cdot\!\bigl(f\,a_\varepsilon(\bar u)\,\nabla v\bigr)
        = \bigl(r - \mu\,a_\varepsilon(\bar u)\bigr)\,u\qquad 
        &\text{ in } \Omega_T,\\
        \partial_t v - D_v\Delta v + \alpha v = \beta\,a_\varepsilon(\bar u)
        &\text{ in } \Omega_T,\\
        \partial_\nu u=\partial_\nu v=0
        &\text{ on } \Gamma_T,\\
        u(0)=u_0,\quad v(0)=v_0
        &\text{ in }\Omega.
    \end{cases}
\end{equation}
For each fixed $\bar u\in L^4(\Omega_T)$ and $\varepsilon>0$, the system \eqref{aux-frozen} is linear in $(u,v)$,
with bounded coefficients depending only on the frozen function $a_\varepsilon(\bar u)$.

\begin{definition}
    A pair $(u,v)$ is called a solution to \eqref{aux-frozen} in the weak/strong sense if
    \begin{equation*}
        \begin{cases}
            u \in L^{\infty}(0, T; L^4(\Omega)) \cap L^2(0, T; H^1(\Omega)), \\
            v \in C([0, T]; W_{\mathbf{n}}^{\frac{3}{2}, 4}(\Omega)) \cap L^4(0, T; W^{2,4}(\Omega)), \\
            \partial_t u \in L^2(0, T; (H^{1}(\Omega))'), \quad \partial_t v \in L^4(\Omega_T),
        \end{cases}
    \end{equation*}
    and for all $\varphi\in L^4(0,T;W^{1,4}(\Omega)),$
    \begin{align*}
        \!\int_0^T\!\!\langle \partial_t u,\varphi\rangle_{(H^{1}(\Omega))',H^1(\Omega)}\,dt
        &+ D_u\!\int_{\Omega_T} \nabla u\!\cdot\!\nabla\varphi\;dtdx
        - \int_{\Omega_T} f\,a_\varepsilon(\bar u)\,\nabla v\!\cdot\!\nabla\varphi\;dtdx\\
        = &\int_{\Omega_T} (r-\mu a_\varepsilon(\bar u))\,u\,\varphi\;dtdx,
    \end{align*}
    while $v$ satisfies the following equation in the strong sense:
    \begin{equation*}
        \begin{cases}
            \partial_t v - D_v\Delta v + \alpha v=\beta\,a_\varepsilon(\bar u),\quad &\text{ in }\Omega_T,\\
            \partial_\nu v=0, &\mbox{ on }\Gamma_T,\\
            v(0)=v_0 &\mbox{ in }\Omega.
        \end{cases}
    \end{equation*}
\end{definition}

\begin{remark}
    The proof uses several steps.
    We explain our strategy for passing from the auxiliary to the original problem. 
    \begin{enumerate}
        \item[(a)] \textbf{Auxiliary regularized problem}: for every $\varepsilon\in(0,1]$ and each given $\bar u\in L^4(\Omega_T)$,
        we solve the $\varepsilon$–regularized frozen–coefficient system \eqref{aux-frozen}.
        \item[(b)] \textbf{Fixed point argument-positivity-uniqueness}: the map 
        \begin{equation}\label{R}
            \mathcal R:\bar u\mapsto u_\varepsilon
        \end{equation}
        is shown to be continuous and to have relatively compact image in $L^4(\Omega_T)$.  
        By Schauder's fixed point theorem, there exists a fixed point $u_\varepsilon\in L^4(\Omega_T)$ such that $\mathcal R(u_\varepsilon)=u_\varepsilon$.  
        This provides a solution $(u_\varepsilon,v_\varepsilon)$ to the auxiliary system for each $\varepsilon>0$. At this stage the nonnegativity of $(u_\varepsilon,v_\varepsilon)$ is established.
        \item[(c)] \textbf{Limit as $\varepsilon\to0$}:  
        uniform a priori estimates, independent of $\varepsilon$, allow us to extract a subsequence
        $(u_\varepsilon,v_\varepsilon)$ 
        converging to a limit $(u,v)$ as $\varepsilon\to0$.
        This limit satisfies the original unregularized problem \eqref{1}. The uniqueness of the solution is proved only after this passage to the limit.
    \end{enumerate} 
\end{remark}

Our approach builds upon the method proposed by Arumugam et al. \cite{ARUMUGAM2020103090} for other systems.

\subsection{Existence, uniqueness and regularity results for the auxiliary system}
\label{sec:existence_auxiliary}

For the $v$-equation we have the following situation.
Assuming that $v_0 \in W^{\frac{3}{2},4}_{\textbf{n}}(\Omega)$, since 
$a_\varepsilon(\bar u) \in L^\infty(\Omega_T)$, it follows that the $v$-equation admits a unique  
solution. Moreover, using the maximal ($L^4-L^q$)-regularity, we can deduce that
$v\in X_4$, where we recall that $X_4$ is given in \eqref{X4}. This shows that $v$ is a strong solution.

Next, assume that \(u_0 \in L^4(\Omega)\) and \(f \in L^\infty(\Omega_T)\). Since $v \in X_4$ and $\Omega \subset \R^2$, we have that \(\nabla v \in L^q(\Omega_T;\mathbb R^2)\), for any $1 \leq q < \infty$.
The frozen Neumann problem reads as
\begin{equation}\label{FP}
    \begin{cases}
        \partial_t u - D_u\Delta u + \nabla\!\cdot\!\big(f\,a_\varepsilon(\bar u)\,\nabla v\big)
        = \big(r-\mu a_\varepsilon(\bar u)\big)\,u \;&\mbox{ in } \Omega_T,\\
        \partial_\nu u=0 &\mbox{ on }\Gamma_T,\\
        u(\cdot,0)=u_0 &\mbox{ in }\Omega.
    \end{cases}
\end{equation}
We rewrite the first equation in \eqref{FP} as
\[
    \partial_tu-D_u\Delta u+Cu=-\nabla\!\cdot F, \quad F:=f\,a_\varepsilon(\bar u)\nabla v,
    \quad C:=-r+\mu a_\varepsilon(\bar u).
\]
Since \(f,a_\varepsilon(\bar u)\in L^\infty(\Omega_T)\) and
\(\nabla v\in L^4(\Omega_T;\mathbb R^2)\), we have
\[
    F\in L^4(\Omega_T;\mathbb R^2)\hookrightarrow L^2(\Omega_T;\mathbb R^2),
    \quad
    C\in L^\infty(\Omega_T),
\]
and, by continuity of
\(\nabla\!\cdot:L^2(\Omega;\mathbb R^2)\to (H^{1}(\Omega))'\),
\[
    \|\nabla\!\cdot F\|_{L^2(0,T;(H^{1}(\Omega))')}
    \le \|F\|_{L^2(\Omega_T)}.
\]
Since \(u_0\in L^4(\Omega)\hookrightarrow L^2(\Omega)\), standard
variational theory for linear parabolic equations with homogeneous
Neumann boundary conditions yields a unique weak solution satisfying
\[
    u\in L^2(0,T;H^1(\Omega)),\quad \partial_tu\in L^2(0,T;(H^{1}(\Omega))').
\]
In addition, using the variation-of-constants formula associated with the Neumann heat semigroup and the standard $L^4$-smoothing estimates, we obtain that
\begin{equation}
    \label{eq:u_C_L4}
    u\in C([0,T];L^4(\Omega)).    
\end{equation}

Moreover, there is a constant \(C_{T,\varepsilon}>0\) depending on \(T,\varepsilon,\) but not on $\bar u$, such that
\begin{equation}\label{EE}
    \begin{aligned}
        &\|u\|_{L^\infty(0,T;L^4(\Omega))}
        +\|u\|_{L^2(0,T;H^1(\Omega))}
        +\|\partial_t u\|_{L^2(0,T;(H^1(\Omega))')}
        \\
        &\quad\leq C_{T,\varepsilon}\left(\|u_0\|_{L^4(\Omega)}
        +\|F\|_{L^4(\Omega_T)}\right)
        \\
        &\quad\leq C_{T,\varepsilon}
        \left[ \|u_0\|_{L^4(\Omega)} +\frac{\|f\|_{L^\infty(\Omega_T)}}{\varepsilon}\left( \|v_0\|_{W^{3/2,4}_{n}}+\frac{\beta}{\varepsilon}|\Omega_T|^{1/4}\right)\right]
        =:K_{T,\varepsilon}.
    \end{aligned}
\end{equation}

We also record the following local-in-time regularity. Let
\(\tau\in(0,T)\) and choose \(\chi_\tau\in C^\infty([0,T])\) such that
\(\chi_\tau=0\) on \([0,\tau/2]\) and
\(\chi_\tau=1\) on \([\tau,T]\).
Setting \(z_\tau:=\chi_\tau u\), we have that
\[
    \partial_tz_\tau-D_u\Delta z_\tau+Cz_\tau = -\nabla\!\cdot(\chi_\tau F)+\chi_\tau'u, \quad z_\tau(0)=0.
\]
Since $\nabla v\in L^\infty(0,T;L^4(\Omega;\mathbb R^2))$
and \(f,a_\varepsilon(\bar u)\in L^\infty(\Omega_T)\), we have
that $F\in L^\infty(0,T;L^4(\Omega;\mathbb R^2)).$
Moreover, \(u\in C([0,T];L^4(\Omega))\) implies
\(z_\tau=\chi_\tau u\in C([0,T];L^4(\Omega))\). Hence, since
\(C\in L^\infty(\Omega_T)\), for every \(1<q<\infty\),
\[
    G_\tau:= -\nabla\!\cdot(\chi_\tau F)+\chi_\tau'u-Cz_\tau \in L^q(0,T;W^{-1,4}(\Omega)).
\]
Therefore, \(z_\tau\) satisfies
\[
    \partial_tz_\tau-D_u\Delta z_\tau=G_\tau, \quad z_\tau(0)=0.
\]
Let \(A_4\) denote the weak Neumann realization of
\(-D_u\Delta\) in \(W^{-1,4}(\Omega)\). Since \(\Omega\) is smooth,
standard elliptic \(W^{1,4}\)-regularity for the shifted Neumann
problem gives
\[
    A_4+I:W^{1,4}(\Omega)\longrightarrow W^{-1,4}(\Omega)
\]
as a topological isomorphism. Consequently,
\[
    D(A_4)=W^{1,4}(\Omega)
\]
with equivalent norms. Therefore, maximal regularity
\cite[Theorem~5.4 and Remarks~5.2(i), 5.14]{HALLERDINT},
applied with spatial exponent \(4\), together with uniqueness of the
weak solution, yields
\[
    z_\tau\in W^{1,q}(0,T;W^{-1,4}(\Omega))
    \cap L^q(0,T;W^{1,4}(\Omega)),
    \qquad 2\le q<\infty.
\]
The same conclusion for \(1<q<2\) follows from the case \(q=2\),
since \((0,T)\) has finite measure.
Since \(z_\tau=u\) on \([\tau,T]\), it follows that
\begin{equation}\label{local-u-regularity}
    u\in L^q(\tau,T;W^{1,4}(\Omega)), \quad
    \partial_tu\in L^q(\tau,T;W^{-1,4}(\Omega)),
    \quad 1<q<\infty.
\end{equation}
The corresponding estimate may depend on \(\tau\), \(\varepsilon\),
and \(q\).

\subsection{Existence of solutions to the nonlinear $\varepsilon-$regularized system via Schauder fixed point}
\label{subsec:schauder}

Our next aim is to establish an existence result for the $\varepsilon-$regularized system.
Here, to simplify the notations we omit the dependence of the solutions on the parameter \(\varepsilon\).
For each fixed \(\varepsilon>0\), we prove the existence of solutions to the system \eqref{reg-eps}
by applying Schauder's fixed-point theorem.
We introduce a closed and convex subset of the Banach space \(L^4(\Omega_T)\),
which simplifies the notation, and we show that the obtained solution indeed belongs to \(L^4(\Omega_T)\).
Let
\begin{equation}\label{eq:def_S_M4}
    S_{M}^{(4)}:=\Bigl\{\,u\in L^4(\Omega_T):\ \|u\|_{L^4(\Omega_T)}\le M\Bigr\}.    
\end{equation}

This set is nonempty, convex, closed and bounded in the Banach space \(L^4(\Omega_T)\).
By \eqref{EE}, for every $\bar u\in L^4(\Omega_T)$, the corresponding
solution $u=\mathcal R(\bar u)$ satisfies
\[
    \|\mathcal R(\bar u)\|_{L^4(\Omega_T)}
    \leq T^{1/4}
    \|u\|_{L^\infty(0,T;L^4(\Omega))}
    \leq T^{1/4}K_{T,\varepsilon}.
\]
Since $K_{T,\varepsilon}$ is independent of the frozen function
$\bar u$, we may choose
$M:=T^{1/4}K_{T,\varepsilon}.$
It then follows that
$\mathcal R\bigl(S_M^{(4)}\bigr)\subset S_M^{(4)},$
where $\mathcal R$ is the solution map defined in \eqref{R}.

\subsubsection{\bf Compactness of \(\mathcal R\) in \(L^4(\Omega_T)\).}

Let $(u_n)_n$ be an arbitrary sequence in $\mathcal{R}(S_M^{(4)})$. From the existence and regularity results for the auxiliary system and the uniform estimate \eqref{EE}, we have
\begin{align*}
    \sup_{n\in\mathbb{N}}
    \Big(
    \|u_n\|_{L^\infty(0,T;L^4(\Omega))}
    +\|u_n\|_{L^2(0,T;H^1(\Omega))} \quad
    +\|\partial_tu_n\|_{L^2(0,T;(H^1(\Omega))')}
    \Big)
    \leq K_{T,\varepsilon},
\end{align*}
where $K_{T,\varepsilon}$ is independent of $n$.
Since $\Omega\subset\mathbb{R}^2$, we have
\[
    X_0:=H^1(\Omega)
    \embc
    X:=L^4(\Omega)
    \hookrightarrow
    X_1:=(H^1(\Omega))'.
\]
Therefore, Lemma~\ref{lem-21} yields
\[
    \left\{
    w\in L^2(0,T;X_0):
    \partial_tw\in L^2(0,T;X_1)
    \right\}
    \embc
    L^2(0,T;L^4(\Omega)).
\]
Since $(u_n)_n$ is uniformly bounded in the space on the left-hand side, there exist a subsequence, still denoted by $(u_n)_n$, and $u\in L^2(0,T;L^4(\Omega))$ such that
\[
    u_n\longrightarrow u
    \quad\text{strongly in }L^2(0,T;L^4(\Omega)).
\]

We next show that the limit $u$ also belongs to $L^\infty(0,T;L^4(\Omega))$. Therefore, by the Banach--Alaoglu theorem and the metrizability of the weak-star topology on bounded subsets of the dual of a separable Banach space, there exist a further subsequence, still not relabeled, and
$\widetilde{u}\in L^\infty(0,T;L^4(\Omega))$
such that $u_n\rightharpoonup^\ast\widetilde{u}
\quad\text{in }L^\infty(0,T;L^4(\Omega)).$
Moreover, the weak-star lower semicontinuity of the norm yields
\[
    \|\widetilde{u}\|_{L^\infty(0,T;L^4(\Omega))} \leq \liminf_{n\to\infty} \|u_n\|_{L^\infty(0,T;L^4(\Omega))} \leq K_{T,\varepsilon}.
\]
It remains to identify $\widetilde{u}$ with the strong limit $u$. For every $\varphi\in C_c^\infty(\Omega_T)$, the weak-star convergence gives
\[
    \int_{\Omega_T}u_n\varphi\,dx\,dt
    \longrightarrow
    \int_{\Omega_T}\widetilde{u}\varphi\,dx\,dt.
\]
On the other hand, 
\begin{align*}
    \left|
    \int_{\Omega_T}(u_n-u)\varphi\,dx\,dt
    \right|
    &\leq
    \|u_n-u\|_{L^2(0,T;L^4(\Omega))}
    \|\varphi\|_{L^2(0,T;L^{4/3}(\Omega))}\longrightarrow 0.
\end{align*}
Thus, both sequences converge to the same limit in
$\mathcal{D}'(\Omega_T)$. Hence,
$u=\widetilde{u}
\quad\text{a.e. in }\Omega_T.$
Consequently,
\[
    u\in L^\infty(0,T;L^4(\Omega)),
    \quad
    \|u\|_{L^\infty(0,T;L^4(\Omega))}
    \leq K_{T,\varepsilon}.
\]
The triangle inequality then gives
\[
    \|u_n-u\|_{L^\infty(0,T;L^4(\Omega))}
    \leq
    \|u_n\|_{L^\infty(0,T;L^4(\Omega))}
    +
    \|u\|_{L^\infty(0,T;L^4(\Omega))}
    \leq 2K_{T,\varepsilon}.
\]
Finally, by H\"older's inequality in time,
\begin{align*}
    \|u_n-u\|_{L^4(\Omega_T)}^4
    &=
    \int_0^T
    \|u_n(t)-u(t)\|_{L^4(\Omega)}^4\,dt \\
    &\leq
    \|u_n-u\|_{L^\infty(0,T;L^4(\Omega))}^2
    \|u_n-u\|_{L^2(0,T;L^4(\Omega))}^2 \\
    &\leq
    4K_{T,\varepsilon}^2
    \|u_n-u\|_{L^2(0,T;L^4(\Omega))}^2
    \longrightarrow 0.
\end{align*}
Therefore,
$u_n\longrightarrow u
\quad\text{strongly in }L^4(\Omega_T).$
Since every sequence in $\mathcal{R}(S_M^{(4)})$ admits a
subsequence converging strongly in $L^4(\Omega_T)$, the set $\mathcal{R}(S_M^{(4)})$ is relatively compact in
$L^4(\Omega_T)$.

\subsubsection{\bf Continuity of $\mathcal R$ in $L^4(\Omega_T)$}

Let $(\bar u_{n})$ be a sequence in \(S_{M}^{(4)}\)
and let $ \bar u\in S_{M}^{(4)} $ be such that $\bar u_{n}\to\bar u$ in $L^4(\Omega_T)$, as $n\to\infty$.  We define $(u_n,v_n)$ such that \(\mathcal R(\bar u_{n})=u_n\) and $v_n$ the corresponding solution of the second system. The goal is to show that \(u_n=\mathcal R(\bar u_{n})\to \mathcal R(\bar u)=u\) in \(L^4(\Omega_T)\), as $n\to\infty$. This is achieved by using energy estimates and Sobolev embeddings. We refer the reader to Appendix \ref{appendix_a} for the full proof of the continuity of $\mathcal R$.
We have shown that the map \(\mathcal R:S_{M}^{(4)}\to S_{M}^{(4)}\) is continuous with a relatively compact image.
By Schauder’s fixed-point theorem, there exists a fixed point \(u=\mathcal{R}(u)\in S_{M}^{(4)}\).

Although $\mathcal R$ is regarded as a self-map of $S_M^{(4)}$ for
the application of Schauder's fixed-point theorem, the regularity
results obtained for the auxiliary system yield the stronger range
property
\[
    \mathcal R:
    S_M^{(4)}
    \longrightarrow
    W_4\cap C([0,T];L^4(\Omega))\cap S_M^{(4)}
    \hookrightarrow S_M^{(4)}.
\]
Indeed, for every $\bar u\in S_M^{(4)}$, one has $\bar u\in L^4(\Omega_T)$. The regularity results established above
for the corresponding auxiliary problem give
\[
    v\in X_4
    \quad\text{and}\quad
    \mathcal R(\bar u)\in W_4.
\]
Moreover, applying \eqref{eq:u_C_L4} to the auxiliary solution
$\mathcal R(\bar u)$ yields
$\mathcal R(\bar u)\in C([0,T];L^4(\Omega)).$
For this fixed point, the identity
$u=\mathcal R(u)\in\mathcal R\bigl(S_M^{(4)}\bigr)$
and the above range property immediately yield
$u\in W_4\cap C([0,T];L^4(\Omega)).$
Let $v$ be the solution of the linear $v$-equation obtained by replacing the frozen function $\bar u$ by the fixed point $u$. The auxiliary regularity result gives $v\in X_4$.  Moreover, since $u=\mathcal R(u)$ is the auxiliary solution
corresponding to the frozen function $u$, the local regularity result
\eqref{local-u-regularity} gives, for every $\tau\in(0,T)$ and every
$1<q<\infty$,
$u\in L^q(\tau,T;W^{1,4}(\Omega)),
\quad
\partial_tu\in L^q(\tau,T;W^{-1,4}(\Omega)).$
Therefore, by the definition of $\mathcal R$, $(u,v)$ is a weak
solution of the $\varepsilon$-regularized system satisfying
\[
    (u,v)\in W_4\times X_4,
    \quad
    u\in C([0,T];L^4(\Omega)),
\]
and, for every $\tau\in(0,T)$ and $1<q<\infty$,
\[
    u\in L^q(\tau,T;W^{1,4}(\Omega)),
    \quad
    \partial_tu\in L^q(\tau,T;W^{-1,4}(\Omega)).
\]

\subsection{Positivity and $\varepsilon$-independent uniform bounds for $(u_\varepsilon, v_\varepsilon)$.}

The main concern of this section is to find $\varepsilon$-independent bounds for the regularized solution $(u_\varepsilon, v_\varepsilon)$ of \eqref{reg-eps}. This will allow us to pass to the limit as $\varepsilon\to 0$ to get a solution of our original system \eqref{1}.

We have the following result.
\begin{lemma}\label{lem:uniform_bounds}
    Assume the hypotheses stated above.
    Let $v_0 \in W_{\mathbf{n}}^{\frac{3}{2}, 4}(\Omega)$ and $u_0\in L^4(\Omega)$ with $u_0\ge0$ and $v_0\ge0$ a.e. in $\Omega$.
    Then, the corresponding weak solution $(u_\varepsilon,v_\varepsilon)$ of the system \eqref{reg-eps} satisfies the following.
    
    \begin{enumerate}
        \item[(a)] \textbf{Positivity}: we have that $u_\varepsilon\ge0$ and $v_\varepsilon\ge0$ a.e.\ in $\Omega_T$.
        
        \item[(b)] \textbf{Uniform bounds for $u_\varepsilon$}: there are constants
        $K_1,K_2,K_3>0$, independent of $\varepsilon$, such that
        \begin{equation}\label{eq:uestimate}
        \|u_\varepsilon\|_{L^\infty(0,T;L^4(\Omega))}\le K_1,\quad
        \|u_\varepsilon\|_{L^2(0,T;H^1\Omega)}\le K_2,\quad
        \|\partial_t u_\varepsilon\|_{L^{2}(0,T;(H^{1}(\Omega))')}\le K_3.
        \end{equation}
        
        \item[(c)] \textbf{Maximal parabolic regularity}: 
        there is a constant $K_4>0$, independent of $\varepsilon$, such that
        \begin{equation}\label{eq:vestimate}
            \|v_\varepsilon\|_{L^\infty(0,T;W^{\frac{3}{2},4}(\Omega))}+\|\partial_t v_\varepsilon\|_{L^4(\Omega_T)}
            +\|v_\varepsilon\|_{L^4(0,T;W^{2,4}(\Omega))}\ \le\ K_4.
        \end{equation}
    \end{enumerate}
    All the above constants depend only on the data $(D_u,D_v,\alpha,\beta,\mu,\|f\|_{L^\infty(\Omega_T)},r,\Omega,T)$.
\end{lemma}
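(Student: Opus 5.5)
The plan is to prove (a) by Stampacchia-type truncation, then obtain (b) through a bootstrap $L^1\to L^2\to L^4$ of energy estimates for $u_\varepsilon$, coupled at each stage with maximal regularity (Theorem~\ref{regular}) for $v_\varepsilon$; (c) then follows from one more application of Theorem~\ref{regular}. Throughout we use only the structural facts $|a_\varepsilon(s)|\le |s|$ and $a_\varepsilon(s)s\ge 0$, so every constant is $\varepsilon$-independent. Test functions such as $u_\varepsilon^3$ are justified on $[\tau,T]$ by the local regularity \eqref{local-u-regularity}, together with $u_\varepsilon\in C([0,T];L^4(\Omega))$ to let $\tau\to0$. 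If needed, one first works with the truncations $\min\{u_\varepsilon,k\}$ and lets $k\to\infty$.

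\textbf{Positivity.} Test the $u_\varepsilon$-equation with $u_\varepsilon^-=\max\{-u_\varepsilon,0\}$. Since $a_\varepsilon(u_\varepsilon)=0$ only where $u_\varepsilon=0$, we use instead $|a_\varepsilon(u_\varepsilon)|\le|u_\varepsilon|$ on $\{u_\varepsilon<0\}$. This gives
\[
\tfrac12\tfrac{d}{dt}\|u_\varepsilon^-\|_2^2+D_u\|\nabla u_\varepsilon^-\|_2^2\le \|f\|_\infty\int |u_\varepsilon^-|\,|\nabla v_\varepsilon|\,|\nabla u_\varepsilon^-| + r\|u_\varepsilon^-\|_2^2 ,
\]
where the logistic part $-\mu a_\varepsilon(u_\varepsilon)u_\varepsilon u_\varepsilon^-$ has a good sign. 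We bound the first term by $\|u^-\|_{4}\|\nabla v\|_4\|\nabla u^-\|_2$ and apply Ladyzhenskaya and Young to get
\[
\tfrac{d}{dt}\|u^-\|_2^2\le C(1+\|\nabla v_\varepsilon\|_{4}^4)\|u^-\|_2^2 .
\]
Since $\nabla v_\varepsilon\in L^\infty(0,T;L^4)$ (because $v_\varepsilon\in X_4$) and $u_0^-=0$, Gronwall gives $u_\varepsilon\ge0$. Then $a_\varepsilon(u_\varepsilon)\ge0$, and the weak maximum principle for the linear $v$-equation with $v_0\ge0$ gives $v_\varepsilon\ge0$.

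\textbf{Uniform bounds.} We proceed in three steps.
\begin{itemize}
\item[(i)] Integrating the $u$-equation and using $u\ge0$ gives $\frac{d}{dt}\int u_\varepsilon\le r\int u_\varepsilon$, hence an $L^\infty(0,T;L^1)$ bound.
\item[(ii)] Test with $u_\varepsilon$. The taxis term is bounded by $\frac{D_u}{2}\|\nabla u\|_2^2+C\|u\|_4^2\|\nabla v\|_4^2$. By Gagliardo--Nirenberg, $\|u\|_4^2\le C\|u\|_2\|u\|_{H^1}$, and Young absorbs the gradient again, leaving a term $C\|\nabla v(t)\|_4^4\|u\|_2^2$. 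Theorem~\ref{regular}, applied on $(0,t)$ with right-hand side $\beta a_\varepsilon(u_\varepsilon)$ and $|a_\varepsilon(u)|\le u$, gives $\|\nabla v(t)\|_{L^4}\le C\bigl(\|v_0\|_{W^{3/2,4}}+\|u\|_{L^4(0,t;L^2)}\bigr)$ in 2D. Here I would take $p=4$, $q=2$ and embed $W^{3/2,2}\hookrightarrow W^{1,4}$, though this embedding needs checking. With $y(t)=\|u(t)\|_2^2$, this produces an integral inequality $y'\le C\bigl(1+\int_0^t y^2\bigr)y$, which is superlinear.
\item[(iii)] Test with $u_\varepsilon^3$ and treat $w=u_\varepsilon^2$ in the same way to obtain the $L^\infty(0,T;L^4)$ bound. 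The $L^2(0,T;H^1)$ bound then follows from (ii), and the bound on $\partial_t u_\varepsilon$ in $L^2(0,T;(H^1)')$ follows by duality from the weak formulation \eqref{W1}, using $\|f a_\varepsilon(u)\nabla v\|_{L^2}\le\|f\|_\infty\|u\|_{L^\infty L^4}\|\nabla v\|_{L^4}$.
\end{itemize}

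\textbf{Main obstacle.} The hard step is closing (ii)--(iii), because the superlinear inequality in (ii) cannot be handled by Gronwall without the logistic damping. Here I would exploit the logistic term $-\mu\int a_\varepsilon(u)u^{k+1}$ more carefully. First, the taxis contribution can be written through $a_\varepsilon(u)^2\le a_\varepsilon(u)u$ so as to carry the same weight $a_\varepsilon(u)$ as the logistic term. Second, the $L^1$ step also yields $\mu\int_0^T\!\int a_\varepsilon(u)u\le C$, and the $L^2$ step analogously controls $\int_0^T\!\int a_\varepsilon(u)u^2$. With these, the source $a_\varepsilon(u)$ of the $v$-equation is bounded in a better Lebesgue space in time than a bare Gronwall argument provides, and this should make the coupled inequality linear.

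If this does not close directly, the fallback is a continuity (short-time, then iterate) argument in $t$. On short time intervals the smallness of $\int_0^t y^2$ makes the inequality effectively linear, and the length of each interval depends only on the data, not on $\varepsilon$.

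\textbf{Estimate (c).} With the $L^\infty(0,T;L^4)$ bound on $u_\varepsilon$, hence on $a_\varepsilon(u_\varepsilon)$, Theorem~\ref{regular} with $p=q=4$ and $v_0\in W^{3/2,4}_{\mathbf n}=X_{4,4}$ gives \eqref{eq:vestimate}.
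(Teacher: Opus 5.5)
Your parts (a) and (c), the $L^\infty(0,T;L^1(\Omega))$ bound, and the duality bound on $\partial_t u_\varepsilon$ are fine. The core of (b), however, is not established. In (ii) you control $\|\nabla v_\varepsilon(t)\|_{L^4}$ through $\|u_\varepsilon\|_{L^4(0,t;L^2)}$. This feeds $u_\varepsilon$ back into the Gronwall coefficient and produces the superlinear inequality $y'\le C(1+\int_0^t y^2)\,y$. You correctly see that this does not close, but neither remedy closes it.

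\begin{itemize}
\item The ``main obstacle'' paragraph notes that the $L^1$ step yields $\mu\int_0^T\!\int_\Omega a_\varepsilon(u_\varepsilon)u_\varepsilon\le C$. It then only asserts that this ``should make the coupled inequality linear''. The companion bound on $\int\!\int a_\varepsilon(u_\varepsilon)u_\varepsilon^2$ is circular, since it presupposes the $L^2$ estimate you are trying to close.
\item The fallback is also circular. For a superlinear inequality, the admissible length of each short interval depends on the size of $y$ at its left endpoint, which is exactly the unknown quantity. The intervals can shrink and sum to less than $T$, which is the blow-up scenario. So you get an $\varepsilon$-independent bound only on some $[0,T^*]$ determined by the data, not on all of $[0,T]$.
\end{itemize}

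The missing step is to estimate $\nabla v_\varepsilon$ through the source $a_\varepsilon(u_\varepsilon)$, which the logistic term already controls, rather than through $u_\varepsilon$. Integrate the mass identity over $(0,T)$ and use $0\le a_\varepsilon(s)^2\le a_\varepsilon(s)s$ for $s\ge 0$:
\[
\|a_\varepsilon(u_\varepsilon)\|_{L^2(\Omega_T)}^2\le\int_{\Omega_T}a_\varepsilon(u_\varepsilon)u_\varepsilon\,dx\,dt\le\mu^{-1}\bigl(1+rTe^{rT}\bigr)\|u_0\|_{L^1(\Omega)} .
\]
Testing the $v_\varepsilon$-equation with $v_\varepsilon$ and with $-\Delta v_\varepsilon$ (or Theorem~\ref{regular} with $p=q=2$) then bounds $v_\varepsilon$ in $L^\infty(0,T;H^1(\Omega))\cap L^2(0,T;H^2(\Omega))$, uniformly in $\varepsilon$. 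Ladyzhenskaya's inequality $\|\nabla v_\varepsilon\|_{L^4}^4\le C\|\nabla v_\varepsilon\|_{L^2}^2\|\nabla v_\varepsilon\|_{H^1}^2$ then gives $\int_0^T\|\nabla v_\varepsilon(t)\|_{L^4(\Omega)}^4\,dt\le C$. With this, your tests with $u_\varepsilon$ and with $u_\varepsilon^3$ both give $y'\le C\bigl(1+\|\nabla v_\varepsilon(t)\|_{L^4}^4\bigr)y$. This is a linear Gronwall inequality with a coefficient bounded in $L^1(0,T)$ independently of $\varepsilon$, and the rest of your plan goes through. This is exactly the paper's route.

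Separately, in your positivity step the logistic term does \emph{not} have a good sign on $\{u_\varepsilon<0\}$. It contributes $+\mu\int_\Omega|a_\varepsilon(u_\varepsilon)|\,|u_\varepsilon^-|^2\,dx$, which must be bounded, for instance by $\frac{\mu}{\varepsilon}\|u_\varepsilon^-\|_{L^2}^2$. This is harmless, since positivity is only needed for each fixed $\varepsilon$.
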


\begin{proof}
We proceed in two parts.

\textbf{Part 1.  Positivity}.
Assume that $u_0,v_0\ge0$ a.e. in $\Omega$.

Set $u_\varepsilon^-:=\min\{u_\varepsilon,0\}$  so that  $\nabla u_\varepsilon^-=\nabla u_\varepsilon|_{\{u_\varepsilon<0\}}$.

Testing the $u_\varepsilon$–equation with $u_\varepsilon^-$ and using the Neumann boundary conditions yields
\begin{align*}
    \frac12\frac{d}{dt}\|u_\varepsilon^-\|_{L^2(\Omega)}^2
    + D_u \|\nabla u_\varepsilon^-\|_{L^2(\Omega)}^2
    =& -\mu\int_\Omega a_\varepsilon(u^{-}_\varepsilon) |u_\varepsilon^-|^2\,dx+r\|u_\varepsilon^-\|_{L^2(\Omega)}^2\\
    &+\int_\Omega f\,a_\varepsilon(u_\varepsilon)\,\nabla v_\varepsilon\!\cdot\!\nabla u_\varepsilon^-\,dx.
\end{align*}
On $\{u_\varepsilon<0\}$ we have that $\displaystyle|a_\varepsilon(u_\varepsilon)|=\frac{|u_\varepsilon|}{1+\varepsilon|u_\varepsilon|}\le |u_\varepsilon|$.
Set
\[
    A_\varepsilon(t):=
    \|f(t)\|_{L^\infty(\Omega)}
    \|\nabla v_\varepsilon(t)\|_{L^4(\Omega)}.
\]
Using H\"older's inequality, the inhomogeneous
Gagliardo--Nirenberg inequality, and Young's inequality, we obtain
\[
    \left|
    \int_\Omega f\,a_\varepsilon(u_\varepsilon)
    \nabla v_\varepsilon\cdot\nabla u_\varepsilon^-\,dx
    \right|
    \le
    \frac{D_u}{2}\|\nabla u_\varepsilon^-\|_{L^2(\Omega)}^2
    + C_{\Omega,D_u}\bigl(1+A_\varepsilon(t)^4\bigr)
    \|u_\varepsilon^-\|_{L^2(\Omega)}^2.
\]
Moreover, since $|a_\varepsilon(s)|\le\varepsilon^{-1}$,
\[
    -\mu\int_\Omega a_\varepsilon(u_\varepsilon^-)
    |u_\varepsilon^-|^2\,dx
    \le
    \frac{\mu}{\varepsilon}
    \|u_\varepsilon^-\|_{L^2(\Omega)}^2.
\]
Therefore,
\[
    \frac12\frac{d}{dt}\|u_\varepsilon^-\|_{L^2(\Omega)}^2
    +\frac{D_u}{2}\|\nabla u_\varepsilon^-\|_{L^2(\Omega)}^2
    \le
    \left[
    r+\frac{\mu}{\varepsilon}
    +C_{\Omega,D_u}\bigl(1+A_\varepsilon(t)^4\bigr)
    \right]
    \|u_\varepsilon^-\|_{L^2(\Omega)}^2.
\]
Since $f\in L^\infty(\Omega_T)$ and
$\nabla v_\varepsilon\in L^4(0,T;L^4(\Omega))$, we have
$A_\varepsilon^4\in L^1(0,T).$ Moreover, since $u_\varepsilon^-(0)=0$, Gr\"onwall’s inequality \cite[Appendix B]{Evans-98}  yields $u_\varepsilon^-(t)\equiv0$ in $[0,T]$, i.e.\ $u_\varepsilon\ge0$ a.e. in $\Omega_T$.

Testing the $v_\varepsilon$–equation in \eqref{reg-eps} with $v_\varepsilon^-:=\min\{v_\varepsilon,0\}$ gives
\[
    \frac12\frac{d}{dt}\|v_\varepsilon^-\|_{L^2(\Omega)}^2
    + D_v\|\nabla v_\varepsilon^-\|_{L^2(\Omega)}^2
    + \alpha\|v_\varepsilon^-\|_{L^2(\Omega)}^2
    = \beta\!\int_\Omega a_\varepsilon(u_\varepsilon)\,v_\varepsilon^-\,dx \le 0,
\]
where we have used that $a_\varepsilon(u_\varepsilon)\ge0$ and $v_\varepsilon^-\le0$. With $v_\varepsilon^-(0)=0$, Gr\"onwall's inequality yields $v_\varepsilon^-\equiv0$. Hence, $v_\varepsilon\ge0$ a.e.\ in $\Omega_T$.\\

\subsection*{Part~2. Uniform bounds}
We establish a series of uniform estimates with respect to the
regularization parameter~$\varepsilon>0$. We proceed in three main steps.

\medskip
\noindent
\textbf{Step~1. Uniform $L^\infty(0,T;L^1(\Omega))$, $L^2(\Omega_T)$ and $L^2(0,T;H^1(\Omega))$ bounds for $u_\varepsilon$.}

\begin{itemize}
\item[(i)]
Integrating the $u_\varepsilon$–equation in \eqref{reg-eps} over~$\Omega$ and using the homogeneous Neumann boundary condition gives
\[
    \frac{d}{dt}\!\int_\Omega u_\varepsilon\;dx
    = r\!\int_\Omega u_\varepsilon \;dx- \mu\!\int_\Omega \frac{u_\varepsilon^2}{1+\varepsilon u_\varepsilon}\;dx.
\]
Since $\tfrac{u_\varepsilon^2}{1+\varepsilon u_\varepsilon}\ge0$ and $u_\varepsilon\ge 0$, we can deduce that
\[
    \frac{d}{dt}\!\int_\Omega u_\varepsilon\;dx
    \le r\!\int_\Omega u_\varepsilon\;dx.
\]
Setting $\displaystyle y_\varepsilon(t):=\int_\Omega u_\varepsilon(x,t)\,dx$ and applying Gr\"onwall’s lemma yields
\[
    0 \le y_\varepsilon(t) \le \bigg(\int_\Omega u_0\,dx\bigg)e^{rT}
    =:K_1(T), \quad \forall t\in[0,T],
\]
with $K_1(T)$ independent of~$\varepsilon$.
Consequently, 
\[
    u_\varepsilon \in L^\infty(0,T;L^1(\Omega))
    \quad\text{uniformly in }\varepsilon.
\]

\item[(ii)]
Integrating the mass identity in time over $(0,T)$ gives
\[
    \mu\!\int_0^T\!\!\int_\Omega \frac{u_\varepsilon^2}{1+\varepsilon u_\varepsilon}\;dtdx
    = \int_0^T\!\!\big(r\,y_\varepsilon(t)-y_\varepsilon'(t)\big)\,dt
    \le rTK_1(T) + \int_\Omega u_0\;dx.
\]
Hence, there is a constant $C_2(T)>0$ independent of $\varepsilon$ such that
\begin{equation}
    \int_{\Omega_T} \frac{u_\varepsilon^2}{1+\varepsilon u_\varepsilon}\;dx\;dt
    \le \frac{1}{\mu}\Big(\|u_0\|_{L^1(\Omega)} + rTK_1(T)\Big)
    =: C_2(T).
\label{eq:L2-aeps}
\end{equation}
Since $\displaystyle a_\varepsilon(s)^2
= \frac{s^2}{(1+\varepsilon s)^2}
\le \frac{s^2}{1+\varepsilon s}$,
the inequality \eqref{eq:L2-aeps} implies the uniform bound
\begin{equation}
    \|a_\varepsilon(u_\varepsilon)\|_{L^2(\Omega_T)}^2
    \le \int_{\Omega_T} \frac{u_\varepsilon^2}{1+\varepsilon u_\varepsilon}\;dtdx
    \le C_2(T),
    \quad\text{i.e.}\quad
    a_\varepsilon(u_\varepsilon)\in L^2(\Omega_T)
    \text{ uniformly in }\varepsilon.
\label{eq:aeps-L2}
\end{equation}

\item[(iii)] Here, we establish uniform $H^1$– and $H^2$–bounds for $v_\varepsilon$.
We consider the second equation in~\eqref{reg-eps}.
Testing with $v_\varepsilon$ and using the fact that $0\le |a_\varepsilon(u_\varepsilon)|\le |u_\varepsilon|$ gives
\[
    \frac12\frac{d}{dt}\|v_\varepsilon\|_{L^2(\Omega)}^2
    + D_v\|\nabla v_\varepsilon\|_{L^2(\Omega)}^2
    + \alpha\|v_\varepsilon\|_{L^2(\Omega)}^2
    \le \frac{\alpha}{2}\|v_\varepsilon\|_{L^2(\Omega)}^2
    + \frac{\beta^2}{2\alpha}\|a_\varepsilon(u_\varepsilon)\|_{L^2(\Omega)}^2.
\]
Integrating in time and using~\eqref{eq:aeps-L2} we get that there is a constant $C_1(T)>0$ independent of~$\varepsilon$ such that
\[
    \sup_{t\in[0,T]}\|v_\varepsilon(t)\|_{L^2(\Omega)}^2
    + \int_0^T\!\|\nabla v_\varepsilon\|_{L^2(\Omega)}^2\,dt
    \le C_1(T).
\]

Next, testing the $v_\varepsilon$–equation with $-\Delta v_\varepsilon$ we obtain that
\[
    \frac12\frac{d}{dt}\|\nabla v_\varepsilon\|_{L^2(\Omega)}^2
    + D_v\|\Delta v_\varepsilon\|_{L^2(\Omega)}^2
    + \alpha\|\nabla v_\varepsilon\|_{L^2(\Omega)}^2
    \le \frac{D_v}{2}\|\Delta v_\varepsilon\|_{L^2(\Omega)}^2
    + \frac{\beta^2}{2D_v}\|a_\varepsilon(u_\varepsilon)\|_{L^2(\Omega)}^2.
\]
Integrating in time and combining with the previous estimates we can deduce that
\begin{equation}\label{ee}
    \sup_{t\in[0,T]}\|\nabla v_\varepsilon(t)\|_{L^2(\Omega)}^2
    + \int_0^T\!\|\Delta v_\varepsilon(t)\|_{L^2(\Omega)}^2\,dt
    \le C_2(T),
\end{equation}
with the constant $C_2(T)$ independent of~$\varepsilon$.
Hence, by the elliptic regularity, we have that
\begin{equation}
    v_\varepsilon\in L^\infty(0,T;H^1(\Omega))
    \cap L^2(0,T;H^2(\Omega))
    \quad \text{uniformly in }\varepsilon.
\label{uniformv}
\end{equation}
In particular, in $2$D,
the Sobolev embedding yields
$\nabla v_\varepsilon\in L^4(\Omega_T;\mathbb R^2)$
with a norm bounded independently of $\varepsilon$.
In $2$D, the Ladyzhenskaya/Gagliardo-Nirenberg inequality 
gives 
\[
    \|\nabla v_\varepsilon\|_{L^4(\Omega)}^4
    \;\le\; C\,\|\nabla v_\varepsilon\|_{L^2(\Omega)}^2\,\|\nabla^2 v_\varepsilon\|_{L^2(\Omega)}^2
    \ \le\ C\,\|\nabla v_\varepsilon\|_{L^2(\Omega)}^2 \|\Delta v_\varepsilon\|_{L^2(\Omega)}^2.
\]
Integrating in time and using the uniform $H^1/H^2$ bounds given in \eqref{ee}, we get that there is a constant $C>0$ independent of~$\varepsilon$ such that
\[
    \int_0^T\!\|\nabla v_\varepsilon(t)\|_{L^4(\Omega)}^4\,dt \ \le C.
\]

\item[(iv)] 
Testing the $u_\varepsilon$-equation with $u_\varepsilon$ yields
\[
    \frac12\frac{d}{dt}\|u_\varepsilon\|_{L^2(\Omega)}^2 + D_u\|\nabla u_\varepsilon\|_{L^2(\Omega)}^2
    \leq \!\Bigg |\int_\Omega f\,a_\varepsilon(u_\varepsilon)\,\nabla v_\varepsilon\!\cdot\!\nabla u_\varepsilon\\;dx \Bigg |
    + r\|u_\varepsilon\|_{L^2(\Omega)}^2.
\]
Using the fact that $0\le a_\varepsilon(u_\varepsilon)\le u_\varepsilon$ and the $2$D-Ladyzhenskaya/Gagliardo-Nirenberg inequality, we obtain that
\begin{align*}
    \Big|\int_\Omega f\,a_\varepsilon(u_\varepsilon)\,\nabla v_\varepsilon\!\cdot\!\nabla u_\varepsilon\;dx \Big|
    \le& \|f\|_{L^\infty(\Omega)}\,\|u_\varepsilon\|_{L^4(\Omega)}\,\|\nabla v_\varepsilon\|_{L^4(\Omega)}\,\|\nabla u_\varepsilon\|_{L^2(\Omega)}\\
    \le& \frac{D_u}{2}\|\nabla u_\varepsilon\|_{L^2(\Omega)}^2
    + C\,\|f\|_{L^\infty(\Omega)}^4\,\|\nabla v_\varepsilon\|_{L^4(\Omega)}^4\,\|u_\varepsilon\|_{L^2(\Omega)}^2.
\end{align*}
Therefore,
\[
    \frac{d}{dt}\|u_\varepsilon\|_{L^2(\Omega)}^2 + D_u\|\nabla u_\varepsilon\|_{L^2(\Omega)}^2
    \le C\big(\|\nabla v_\varepsilon\|_{L^4(\Omega)}^4+1\big)\,\|u_\varepsilon\|_{L^2(\Omega)}^2.
\]
Since $\displaystyle\int_0^T\|\nabla v_\varepsilon\|_{L^4(\Omega)}^4\,dt\le C$ by (iii), Gr\"onwall’s lemma gives that
\begin{equation}
    \{ u_\varepsilon\} \text{ is bounded in } L^\infty(0,T;L^2(\Omega))\cap L^2(0,T;H^{1}(\Omega)) \ \text{uniformly in }\varepsilon.\label{reguH1}
\end{equation}
Moreover, the property \eqref{eq:u_C_L4} of the auxiliary system \eqref{FP}  is inherited by $u_\varepsilon$, i.e.,
$u_\varepsilon \in C([0,T];L^4(\Omega))$.

\end{itemize}

\noindent\textbf{Step 2. Uniform \(L^\infty(0,T;L^4(\Omega))\) bound for
\(u_\varepsilon\).} 
By \eqref{local-u-regularity}, for every $\tau>0$ and every
$1<q<\infty$, we have
$u_\varepsilon\in L^q(\tau,T;W^{1,4}(\Omega)).$
Thus, $u_\varepsilon(t)\in W^{1,4}(\Omega)$ for a.e.
$t\in(\tau,T)$. By the continuous embedding
$W^{1,4}(\Omega)\hookrightarrow L^\infty(\Omega)$ and the Sobolev
chain rule, we have
$u_\varepsilon^p(t)\in W^{1,4}(\Omega)$ for every
$1\leq p<\infty$ and for a.e. $t\in(\tau,T)$.
In particular,
$\|u_\varepsilon^3(t)\|_{W^{1,4}}
\leq C\|u_\varepsilon(t)\|_{W^{1,4}}^3.$
Taking $q=12$ in \eqref{local-u-regularity}, we obtain
\[
    \|u_\varepsilon^3\|_{L^4(\tau,T;W^{1,4})}^4
    \leq
    C\|u_\varepsilon\|_{L^{12}(\tau,T;W^{1,4})}^{12}<\infty.
\]
Therefore, for every fixed $\tau>0$, a standard time-regularization argument shows that $u_\varepsilon^3$ is an
admissible test function in the $u_\varepsilon$-equation on
$(\tau,T)$. 
Set
$w_\varepsilon:=u_\varepsilon^2.$
Using $u_\varepsilon^3$ as a test function, the chain rule, and
integration by parts, we obtain, for a.e. $t\in(\tau,T)$,
\begin{align*}
    \frac{1}{4}\frac{d}{dt}\int_\Omega u_\varepsilon^{4}\;dx
    + \frac{3D_u}{4}\int_\Omega |\nabla w_\varepsilon|^2\;dx
    &= 3\!\int_\Omega f\,a_\varepsilon(u_\varepsilon)\,u_\varepsilon^{2}\,
       \nabla v_\varepsilon\!\cdot\!\nabla u_\varepsilon \;dx\\
    &\quad + r\!\int_\Omega u_\varepsilon^{4}\;dx
     - \mu\!\int_\Omega a_\varepsilon(u_\varepsilon) u_\varepsilon^{4}\;dx.
\end{align*}
Since $0\le a_\varepsilon(u_\varepsilon)\le u_\varepsilon$ and $f\in L^\infty(\Omega)$, we have that
\begin{align}
    \nonumber
    \Big|\int_\Omega f\,a_\varepsilon(u_\varepsilon)\,u_\varepsilon^{2}\,\nabla v_\varepsilon\!\cdot\!\nabla u_\varepsilon\;dx\Big|
    \le &\|f\|_{L^\infty(\Omega)}\int_\Omega u_\varepsilon^{3}|\nabla v_\varepsilon||\nabla u_\varepsilon|\;dx\\
    \label{eq:esti_u3}
    = &\frac{1}{2}\|f\|_{L^\infty(\Omega)}\!\int_\Omega |\nabla v_\varepsilon|\,|w_\varepsilon|\,|\nabla w_\varepsilon|\;dx.
\end{align}
Using H\"older's inequality and the 
two-dimensional Gagliardo--Nirenberg inequality, we obtain that
\begin{align*}
    \int_\Omega
    |\nabla v_\varepsilon|\,|w_\varepsilon|\,
    |\nabla w_\varepsilon|\,dx
    &\le
    \|\nabla v_\varepsilon\|_{L^4(\Omega)}
    \|w_\varepsilon\|_{L^4(\Omega)}
    \|\nabla w_\varepsilon\|_{L^2(\Omega)}
    \\
    &\le
    C\|\nabla v_\varepsilon\|_{L^4(\Omega)}
    \Bigl(
    \|w_\varepsilon\|_{L^2(\Omega)}^{1/2}
    \|\nabla w_\varepsilon\|_{L^2(\Omega)}^{3/2}
    +\|w_\varepsilon\|_{L^2(\Omega)}
    \|\nabla w_\varepsilon\|_{L^2(\Omega)}
    \Bigr).
\end{align*}
Consequently, for every $\eta>0$ there is a constant $C_\eta>0$ such that
\begin{align*}
    &\frac{3}{2}\|f\|_{L^\infty(\Omega_T)}
    \int_\Omega
    |\nabla v_\varepsilon|\,|w_\varepsilon|\,
    |\nabla w_\varepsilon|\,dx
    \le
    \eta\|\nabla w_\varepsilon\|_{L^2(\Omega)}^2\\
    &+C_\eta
    \Bigl(
    \|f\|_{L^\infty(\Omega_T)}^4
    \|\nabla v_\varepsilon\|_{L^4(\Omega)}^4
    +
    \|f\|_{L^\infty(\Omega_T)}^2
    \|\nabla v_\varepsilon\|_{L^4(\Omega)}^2
    \Bigr)
    \|w_\varepsilon\|_{L^2(\Omega)}^2
    \\
    &\quad\le
    \eta\|\nabla w_\varepsilon\|_{L^2(\Omega)}^2
    +
    C_\eta
    \Bigl(
    1+
    \|f\|_{L^\infty(\Omega_T)}^4
    \|\nabla v_\varepsilon\|_{L^4(\Omega)}^4
    \Bigr)
    \|w_\varepsilon\|_{L^2(\Omega)}^2.
\end{align*}
Choosing $\eta:=3D_u/8$, recalling that
$\|w_\varepsilon\|_{L^2(\Omega)}^2
=\int_\Omega u_\varepsilon^4\,dx,$
absorbing the reaction term into the constant, and dropping the
nonpositive logistic term, we obtain, for a.e. $t\in(\tau,T)$,
\begin{equation}
\label{eq:est_u_eps_L4}
    \frac14\frac{d}{dt}
    \|u_\varepsilon(t)\|_{L^4(\Omega)}^4
    +\frac{3D_u}{8}
    \|\nabla(u_\varepsilon^2)(t)\|_{L^2(\Omega)}^2
    \leq
    C_0\left(
    1+\|\nabla v_\varepsilon(t)\|_{L^4(\Omega)}^4
    \right)
    \|u_\varepsilon(t)\|_{L^4(\Omega)}^4,    
\end{equation}
where $C_0>0$ is independent of $\tau$ and $\varepsilon$.
Applying Gr\"onwall's inequality on $(\tau,t)$ gives
\[
    \|u_\varepsilon(t)\|_{L^4(\Omega)}^4
    \leq
    \|u_\varepsilon(\tau)\|_{L^4(\Omega)}^4
    \exp\left(
    4C_0\int_\tau^t
    \left(
    1+\|\nabla v_\varepsilon(s)\|_{L^4(\Omega)}^4
    \right)\,ds
    \right).
\]
Since $u_\varepsilon\in C([0,T];L^4(\Omega))$ and
$u_\varepsilon(0)=u_0$, we have
\[
    \|u_\varepsilon(\tau)\|_{L^4(\Omega)}^4
    \longrightarrow
    \|u_0\|_{L^4(\Omega)}^4
    \qquad\text{as }\tau\downarrow0.
\]
Letting $\tau\downarrow0$, we obtain
\[
    \|u_\varepsilon(t)\|_{L^4(\Omega)}^4
    \leq
    \|u_0\|_{L^4(\Omega)}^4
    \exp\left(
    4C_0\int_0^t
    \left(
    1+\|\nabla v_\varepsilon(s)\|_{L^4(\Omega)}^4
    \right)\,ds
    \right).
\]
Since
\[
    \int_0^T
    \|\nabla v_\varepsilon(s)\|_{L^4(\Omega)}^4\,ds
    \leq C
\]
uniformly in $\varepsilon$, it follows that there is a constant $C>0$ independent of $\varepsilon$ such that
\begin{equation}
    \|u_\varepsilon\|_{L^\infty(0,T;L^4(\Omega))}
    \leq C.
\label{uniformbound}
\end{equation}

\noindent\textbf{Step 3. Uniform bound for $v_\varepsilon$}.
Since $u_\varepsilon$ is bounded in $L^4(\Omega_T)$ independently of $\varepsilon$, it follows from the  maximal parabolic regularity that there is a constant $K_4>0$, independent of $\varepsilon$, such that
\begin{equation}\label{eqvestimate}
    \|v_\varepsilon\|_{L^\infty(0,T;W^{\frac{3}{2},4}(\Omega))}+\|\partial_t v_\varepsilon\|_{L^4(\Omega_T)}
    +\|v_\varepsilon\|_{L^4(0,T;W^{2,4}(\Omega))}\ \le\ K_4,
\end{equation}
and the proof is finished.
\end{proof}

Now we are ready to give the proof of the first main theorem.

\subsection{Proof of Theorem \ref{mainresult1}}
\label{subsec:proof_main_result}

We proceed in two steps.

{\bf Step 1. Existence of solutions.} Recall the system \eqref{reg-eps}.
By \eqref{reguH1},  we have that
$D_u\Delta u_\varepsilon,\;ru_\varepsilon
\in L^2(0,T;(H^{1}(\Omega))')$.
Using \eqref{uniformbound} and \eqref{eqvestimate}, we have that
\[
    a_\varepsilon(u_\varepsilon)\in L^4(0,T;L^4(\Omega)),
    \qquad
    \nabla v_\varepsilon\in L^\infty(0,T;L^q(\Omega;\mathbb R^2))
    \quad \text{for every } 1\le q<\infty,
\]
where the latter follows from the two-dimensional embedding
\(W^{1/2,4}(\Omega)\hookrightarrow L^q(\Omega)\), \(1\le q<\infty\).
Hence, for every \(1\le p<4\), we have that
\[
    a_\varepsilon(u_\varepsilon)\nabla v_\varepsilon
    \in L^4(0,T;L^p(\Omega)^2)\;\mbox{ and }\;
    \nabla\!\cdot\!\bigl(f\,a_\varepsilon(u_\varepsilon)\nabla v_\varepsilon\bigr)\in L^{4}(0,T;W^{-1,p}(\Omega)).
\]  
Also there is a constant $C>0$ independent of $\varepsilon$ such that
\begin{equation}\label{eq:dtueps}
    \|\partial_t u_\varepsilon\|_{L^2(0,T;(H^{1}(\Omega))')}\ \le\ C.
\end{equation}

The compact embeddings $H^1(\Omega)\embc L^2(\Omega)\embc (H^{1}(\Omega))'$ together with \eqref{reguH1}, and 
the Aubin–Lions Lemma 
yields, as $\varepsilon\to 0$, 
\begin{equation}\label{eq:AL-strong}
    u_\varepsilon \to u \ \ \text{in } L^2(0,T;L^2(\Omega)),\quad
    u_\varepsilon \rightharpoonup u \ \ \text{in } L^2(0,T;H^1(\Omega)).
\end{equation}
Moreover, by \eqref{eq:dtueps}, after a subsequence if necessary, as $\varepsilon\to 0$, we have that 
\begin{equation}\label{eq:dt-weak}
    \partial_t u_\varepsilon \rightharpoonup \partial_t u
    \ \ \text{in } L^2(0,T;(H^{1}(\Omega))').
\end{equation}
Using in addition \eqref{reguH1}–\eqref{eq:dtueps}, we obtain that, as $\varepsilon\to 0$,
\begin{equation}\label{eq:u_compact}
    u_\varepsilon \to u \quad\text{in } L^{2}\bigl(0,T;L^4(\Omega)\bigr).
\end{equation}
Interpolating \eqref{eq:u_compact} with the uniform $L^\infty-L^4$ bound yields, as $\varepsilon\to 0$,
\begin{equation}\label{eq:u_L4plus_strong}
    u_\varepsilon \to u \quad\text{ in } L^4(\Omega_T).
\end{equation}
Indeed,  we have that there is a constant $C>0$, independent of $\varepsilon$, such that
\[
    \|u_\varepsilon\|_{ L^2(0,T;H^1(\Omega))}+\|u_\varepsilon\|_{ L^\infty(0,T;L^2(\Omega))}+\|\partial_t u_\varepsilon\|_{ L^2(0,T;(H^{1}(\Omega))')}\le C.
\] 
 Thus, $u_\varepsilon \to u$ in $L^2(0,T;L^4(\Omega))\), as $\varepsilon\to 0$. Moreover, since there is a constant $M>0$, independent of $\varepsilon$, such that $\displaystyle\|u_\varepsilon\|_{L^\infty(0,T;L^4(\Omega))} \le M$,  we have that, as $\varepsilon\to 0$,
\[
    u_\varepsilon \overset{\ast}{\rightharpoonup} u \in L^\infty(0,T; L^4(\Omega))\text{ and }
    \|u\|_{L^\infty(0,T;L^4(\Omega))}\le \lim\inf_{\varepsilon\to 0}\|u_\varepsilon\|_{L^\infty(0,T;L^4(\Omega))} \le M,
\]
and the already established strong convergence $u_\varepsilon\to u$ in $L^{2}(0,T;L^4(\Omega))$, as $\varepsilon\to 0$,  can be upgraded  
to $L^4(\Omega_T)$. In fact, setting 
$b_\varepsilon(t):=\|u_\varepsilon(t)-u(t)\|_{L^4(\Omega)}$, we have that
\[
    \int_0^T b_\varepsilon(t)^{4}\,dt
    =\int_0^T b_\varepsilon(t)^2\,b_\varepsilon(t)^2\,dt
    \le \Big(\sup_{t\in(0,T)} b_\varepsilon(t)\Big)^{2}\int_0^T b_\varepsilon(t)^2\,dt.
\]
Since $\displaystyle\sup_{t\in [0,T]} b_\varepsilon(t)\le 2M$ by the uniform $L^\infty- L^4$ bound, it follows that, as $\varepsilon\to 0$,
\[
    \|u_\varepsilon-u\|_{L^4(\Omega_T)}^{4}
    \le (2M)^{2} \|u_\varepsilon-u\|_{L^{2}(0,T;L^4(\Omega))}^{2}\;\longrightarrow\;0.
\]
Hence, we can deduce that \eqref{eq:u_L4plus_strong} holds.
Moreover, since $u_\varepsilon \in C([0,T];L^4(\Omega))$ we have, 
for any $\varepsilon>0$, 
\begin{equation}
    \label{eq:u_eps_u0}
    u_\varepsilon(0)=u_0 \text{ in } L^4(\Omega).
\end{equation}
From the maximal regularity of the $v_\varepsilon$-equation, we have that
\begin{equation}\label{eq:v_maxreg}
    v_\varepsilon \in L^4\bigl(0,T;W^{2,4}(\Omega)\bigr) \cap C([0,T]; W^{\frac{3}{2},4}(\Omega)),\quad
    \partial_t v_\varepsilon \in L^{4}\bigl(\Omega_T\bigr).
\end{equation}
Then $v_\varepsilon(0)=v_0 \in W_{\mathbf{n}}^{\frac{3}{2},4}(\Omega)$ for any $\varepsilon>0$. 
Moreover, up to a subsequence, as $\varepsilon\to 0$, we have that
\begin{equation}\label{eq:v_compact}
    v_\varepsilon \rightharpoonup v \ \text{ in } L^4(0,T;W^{2,4}(\Omega)) ,\quad
    \partial_t v_\varepsilon \rightharpoonup \partial_t v \ \text{ in } L^{4}(\Omega_T).
\end{equation}
and, using the compact embeddings (Aubin–Lions Lemma), we obtain that, as $\varepsilon\to 0$,
\begin{equation}\label{eq:nablav_strong}
    \nabla v_\varepsilon \to \nabla v \quad\text{in } L^{4}(\Omega_T;\mathbb R^2).
\end{equation}
\medskip
We observe that the regularizer $a_\varepsilon(s)=\frac{s}{1+\varepsilon|s|}$ is Lipschitz continuous and
$|a_\varepsilon(s)-s|\le \varepsilon |s|^2$. Hence, from \eqref{eq:u_compact}, we get the  estimate
\[
    \|a_\varepsilon(u_\varepsilon)-u\|_{L^4(\Omega_T)}
    \;\le\;
    \|a_\varepsilon(u_\varepsilon)-a_\varepsilon(u)\|_{L^4(\Omega_T)}
    \;+\;
    \|a_\varepsilon(u)-u\|_{L^4(\Omega_T)}.
\]
Since $a_\varepsilon$ is Lipschitz continuous, we have that the first term is bounded by $\|u_\varepsilon - u\|_{L^4(\Omega_T)}$ which converges to $0$, as $\varepsilon\to 0$.
Together with \eqref{eq:nablav_strong}, we obtain that, as $\varepsilon\to 0$,
\begin{equation}\label{eq:drift_strong}
    f\,a_\varepsilon(u_\varepsilon)\,\nabla v_\varepsilon \ \to\ f\,u\,\nabla v
    \quad\text{in }L^{2}(\Omega_T;\mathbb R^2).
\end{equation}
Moreover, using \eqref{eq:u_L4plus_strong} we have that, as $\varepsilon\to 0$, 
\begin{equation}
    a_\varepsilon(u_\varepsilon)
    =\frac{u_\varepsilon}{1+\varepsilon |u_\varepsilon|}
    \ \to\ u \quad\text{in }L^4(\Omega_T),
\end{equation}
and
\begin{equation}
    a_\varepsilon(u_\varepsilon)u_\varepsilon
    =\frac{u^2_\varepsilon}{1+\varepsilon |u_\varepsilon|}
    \ \to\ u^2 \quad\text{in }L^{2}(\Omega_T).
\end{equation}
\medskip
For any $\varphi\in L^{2}(0,T;H^{1}(\Omega)),$ the weak formulation for $u_\varepsilon$ and \eqref{eq:u_L4plus_strong}–\eqref{eq:drift_strong} allow passing to the limit, as $\varepsilon\to 0$, to get
\begin{align*}
    \!\int_{0}^{T}\!\!\langle \partial_t u,\varphi\rangle_{(H^{1}(\Omega))',H^1(\Omega)}\,dt
    &+ D_u\!\int_{\Omega_T}\nabla u\!\cdot\!\nabla \varphi\;dtdx
    - \int_{\Omega_T} f\,u\,\nabla v\!\cdot\!\nabla\varphi\;dtdx\\
    =& \int_{\Omega_T} r\,u\,\varphi\;dtdx - \mu\!\int_{\Omega_T} u^2\,\varphi\;dtdx.
\end{align*}
This is the \emph{weak form} of the limit $u$–equation with the Neumann boundary conditions.

Since $v_\varepsilon$ solves the equation in the strong form and
$a_\varepsilon(u_\varepsilon)\to u$ in $L^4(\Omega_T)$, as $\varepsilon\to 0$, passing to the limit as $\varepsilon\to 0$, we get from the regularity in \eqref{eq:v_compact} that $v$ indeed satisfies the equation in the strong form, that is,
\[
    \partial_t v - D_v\Delta v + \alpha v = \beta u \quad\text{ in }\Omega_T,\quad
    \partial_\nu v=0\ \text{on }\Gamma_T,\quad v(0)=v_0 \mbox{ in } \Omega.
\]

In fact, from \eqref{reguH1}–\eqref{eq:dtueps}, using \cite[Corollary 4]{simon1986compact}, with $X=L^2(\Omega)$ and $B=Y=(H^{1}(\Omega))'$,
we have that
$u_\varepsilon\to u$ in $C([0,T];(H^{1}(\Omega))')$, as $\varepsilon\to 0$.
Hence, from \eqref{eq:u_eps_u0}, and the fact that $u\in C([0,T];L^2(\Omega))$ (Lions-Magenes Lemma) we have $u(0)=u_0$ a.e. in $\Omega$.
By the strong formulation and the maximal regularity, $v_\varepsilon\to v$ in $C([0,T];W^{1,4}(\Omega))$, as $\varepsilon\to 0$. 
By \eqref{eq:v_maxreg} we have $v_\varepsilon(0) = v_0 
$ for any $\varepsilon>0$, then $v(0)=v_0$. The non-negativity follows by testing the $u_\varepsilon$–equation with the negative part (standard truncation)
and applying the parabolic maximum principle to the strong $v_\varepsilon$–equation. Then, passing to the limit, as $\varepsilon\to 0$, we can conclude that $u,v\ge0$ a.e. in $\Omega_T$. 

{\bf Step 2. Uniqueness of solutions.} 
Let $(u_1,v_1), (u_2,v_2) \in W_4 \times X_4$ be two solutions of our system. Denoting $(u,v)=(u_1-u_2,v_1-v_2)$ we obtain the system
\begin{equation}\label{1-1}
    \begin{cases}
        \partial_t u - D_u \Delta u + \nabla \cdot (f u_1 \nabla v + f u \nabla v_2) = r u - \mu u (u_1 + u_2), &  \mbox{ in  }\Omega_T, \\
        \partial_t v - D_v \Delta v + \alpha v = \beta u, &  \mbox{ in } \Omega_T, \\
        \partial_\nu u = \partial_\nu v = 0, & \mbox{ on  }\Gamma_T, \\
        u(\cdot,0) = 0, \; v(\cdot,0) = 0, &  \mbox{ in } \Omega.
    \end{cases}
\end{equation}
Testing the first equation in \eqref{1-1} with $u \in L^2(0,T; H^1(\Omega))$ and the second equation with $v - D_v \Delta v \in L^4(\Omega_T)$, we get
\begin{align*}
    &\frac{1}{2} \frac{d}{dt} \left( \|u\|_{L^2(\Omega)}^2 + \|v\|_{L^2(\Omega)}^2 + D_v \|\nabla v\|_{L^2(\Omega)}^2 \right) + D_u \|\nabla u\|_{L^2(\Omega)}^2 + D_v \|\nabla v\|_{L^2(\Omega)}^2 + D_v^2 \|\Delta v\|_{L^2(\Omega)}^2 \\
    &+ \mu \int_\Omega u^2(u_1 + u_2)\;dx + \alpha \|v\|_{L^2(\Omega)}^2 + \alpha D_v \|\nabla v\|_{L^2(\Omega)}^2\\
    =& r\|u\|_{L^2(\Omega)}^2 + \left( f u_1 \nabla v + f u \nabla v_2 , \nabla u \right)_{L^2(\Omega)} + \left(\beta u,v-D_u \Delta v \right)_{L^2(\Omega)}.
\end{align*}
Thus, there is a constant $C>0$ such that
\begin{align*}
    &\frac{1}{2} \frac{d}{dt} \left( \|u\|_{L^2(\Omega)}^2 + \|v\|^2_{H^1(\Omega)} \right) + \|\nabla u\|^2_{H^1(\Omega)} + \|\nabla v\|^2_{H^2(\Omega)} + \mu \int_\Omega u^2(u_1 + u_2)\;dx \\
    \leq &C \left(\|u\|_{L^2(\Omega)}^2 + \left( f u_1 \nabla v + f u \nabla v_2 , \nabla u \right)_{L^2(\Omega)} + \left(u,v-\Delta v \right)_{L^2(\Omega)} \right).
\end{align*}
Since $f \in L^\infty(\Omega)$, we have that
$$
    |(f u_1 \nabla v,\nabla u)_{L^2(\Omega)}| \leq \|f\|_{L^\infty(\Omega)} \|u_1\|_{L^{4}(\Omega)} \|\nabla v\|_{L^4(\Omega)} \|\nabla u\|_{L^2(\Omega)}.
$$
Using the Ladyzhenskaya and Young's inequalities we get, for every $\delta>0$,  
\begin{align*}
    \left|(u_1 \nabla v,\nabla u)_{L^2(\Omega)} \right|\leq& \|u_1\|_{L^4(\Omega)} \|\nabla v\|_{L^2(\Omega)}^{1/2} \|\nabla v\|_{H_1(\Omega)}^{1/2} \|\nabla u\|_{L^2(\Omega)} \\
    \leq &\delta \left(\|\nabla v\|_{H^1(\Omega)}^2 + \|\nabla u\|_{L_2(\Omega)}^2 \right) + C_\delta \|u_1\|_{L^4(\Omega)}^4 \|\nabla v\|_{L^2(\Omega)}^2.
\end{align*}
Analogously, 
\begin{align*}
    |(f u \nabla v_2,\nabla u)_{L^2(\Omega)}| \leq &\|f\|_{L^\infty(\Omega)} \|u\|_{L^{4}(\Omega)} \|\nabla v_2\|_{L^4(\Omega)} \|\nabla u\|_{L^2(\Omega)} \\
    \leq & C \|u\|_{L^2(\Omega)}^{1/2} \|\nabla v_2\|_{L^4(\Omega)} \|u\|_{H^1(\Omega)}^{3/2}.
\end{align*}
Thus, for every $\alpha>0$, we have that
$$
    |(f u \nabla v_2,\nabla u)_{L^2(\Omega)}| \leq \alpha \|u\|_{H^1}^2 + C_\alpha \|\nabla v_2\|_{L^4(\Omega)}^4 \|u\|^2_{L^2(\Omega)}.
$$
On the other hand, 
$$
    |\left(u,v-\Delta v \right)_{L^2(\Omega)}| \leq \alpha (\|v\|^2 + \|\Delta v\|^2 ) + C_\alpha \|u\|^2, 
$$
and taking $\alpha$ small enough, after rearranging the terms, we get
\begin{align}\label{eq:gronwald_unicity}
    &\frac{1}{2} \frac{d}{dt} \left( \|u\|_{L^2(\Omega)}^2 + \|v\|^2_{H^1(\Omega)} \right) + \|u\|^2_{H^1(\Omega)} + \|v\|^2_{H^2(\Omega)} \notag\\
    &\leq C_2 \left(\|u\|_{L^2(\Omega)}^2 + \|u_1\|^4_{L^{4}(\Omega)}\|\nabla v\|^2_{L^2(\Omega)} + \|\nabla v_2\|_{L^{4}(\Omega)}^4 \|u\|_{L^2(\Omega)}^2 \right).
\end{align}
Letting $U(t) := \|u(\cdot,t)\|_{L^2(\Omega)}^2 + \|v(\cdot,t)\|^2_{H^1(\Omega)}$, the  inequality \eqref{eq:gronwald_unicity} implies that
$$
    \frac{1}{2} \frac{d}{dt} U(t) \leq C_2\big( 1 + \|u_1\|^4_{L^{4}(\Omega)} + \|\nabla v_2\|_{L^{4}(\Omega)}^4\big) U(t).
$$
Thus, by Gr\"onwall's inequality, since $U(0)=0$ we get $U(t)=0$ for a.e. $t \in [0,T]$, which implies that $u(t)=0$ and $v(t)=0$ for a.e. $t \in [0,T]$, giving the uniqueness. The proof is finished.

\section{Proof of the second main Theorem}\label{sec:existence_OC}

In this section we give the proof of the existence of optimal solutions.

\begin{proof}[\bf of Theorem \ref{thm:existence}]
Recall that the admissible set $\mathcal S_{ad}$ for the optimal control problem \eqref{1}-\eqref{cont-prob} is given in \eqref{Sad}.
From Theorem \ref{mainresult1}, one has that $\mathcal S_{ad} \neq \emptyset$. Let $\{s_m\}_{m \in \N} := \{(u_m, v_m, f_m)\}_{m \in \N} \subset \mathcal S_{ad}$ be a minimizing sequence of the functional $J$. That is,
\[
    \lim_{m \to +\infty} J(s_m) = \inf_{s:=(u,v,f) \in \mathcal S_{ad}} J(s).
\]
The existence of a minimizing sequence follows from the fact that the functional $J$ is bounded from below by $0$.
By the definition of $\mathcal S_{ad}$, for each $m \in \mathbb{N}$, we have that $(u_m,v_m)$ satisfies the system \eqref{1} with control function $f_m$. Then, for all $\varphi\in L^4(0,T,W^{1,4}(\Omega))$, we have that
\begin{align}\label{eq:system_m}
    \displaystyle\int_0^T\!\!\langle \partial_t u_m,\varphi\rangle_{(H^{1}(\Omega))',H^1(\Omega)}\,dt
    &+ D_u\!\int_{\Omega_T} \nabla u_m\!\cdot\!\nabla\varphi\;dx dt
    - \int_{\Omega_T} f_m u_m\,\nabla v_m\!\cdot\!\nabla\varphi\;dxdt\notag\\
    =& \int_{\Omega_T} (r-\mu u_m)\,u_m\,\varphi\;dxdt,
\end{align}
and 
\begin{equation}\label{eq:system_m-1}
    \displaystyle\D\partial_t v_m - D_v \Delta v_m + \alpha v_m = \beta u_m \; \mbox{ in } \Omega_T.
\end{equation}

In addition we have the boundary and initial conditions
\begin{equation}\label{eq:system_m-2}
    \begin{cases}
        \displaystyle\partial_\nu u_m = \partial_\nu v_m = 0, & \mbox{ on  }\Gamma_T, \\
        \displaystyle\D u_m(\cdot,0) = u_0, \; v_m(\cdot,0) = v_0, & \mbox{ in  }\Omega.
    \end{cases}
\end{equation}

From the definition of $J$ and the assumptions, we can deduce that
\begin{equation}\label{22}
    \{f_m\}_{m \in \N} \text{ is bounded in } L^{4}(\Omega_T). 
\end{equation}
From Theorem \ref{mainresult1}, we have that there is a constant $C > 0$, independent of $m$, such that
\begin{equation}\label{23}
    \| (u_m, v_m)\|_{W_4 \times X_{4}} \leq C. 
\end{equation}
Therefore, using \eqref{22}-\eqref{23}, and taking into account that
$\mathcal{F} \subset L^4(\Omega_T)$ is a closed and convex subset,
we can deduce that for all $f \in \mathcal{F}$, we have that $|f(x,t)| \leq M(x,t)$ for a.e.  $(x,t)$ in $\Omega_T$,
where \(M \in L^\infty(\Omega_T)\). Therefore, \(\mathcal{F}\) is bounded in \(L^4(\Omega_T)\), and since the space \(L^4(\Omega_T)\) is reflexive, it follows that \(\mathcal{F}\) is weakly compact. Hence, there exists $\tilde{s} = (\tilde{u}, \tilde{v}, \tilde{f}) \in W_4 \times X_{4} \times \mathcal{F}$ such that, for some subsequence of $\{s_m\}_{m \in \N}$ (still denoted by $\{s_m\}_{m \in \N}$), the following convergences hold, as $m \to +\infty$:
\begin{align}
    u_m \rightharpoonup \tilde{u} 
    & \quad \text{in } L^4(\Omega_T), \label{24} \\
    u_m \rightharpoonup \tilde{u} 
    & \quad \text{in } L^2(0,T; H^{1}(\Omega)), \label{24b} \\
    v_m \rightharpoonup \tilde{v} 
    & \quad \text{in } L^4(0,T; W^{2,4}(\Omega)), \label{25} \\
    \partial_t u_m \rightharpoonup \partial_t \tilde{u} 
    & \quad \text{in } L^2(0,T;(H^{1}(\Omega))'), \label{26} \\
    \partial_t v_m \rightharpoonup \partial_t \tilde{v} 
    & \quad \text{in } L^{4}(\Omega_T), \label{27} \\
    f_m \rightharpoonup \tilde{f} 
    & \quad \text{in } L^{4}(\Omega_T), \quad \tilde{f} \in \mathcal{F}. \label{28}
\end{align}
From the convergences \eqref{24}-\eqref{27}, using the Sobolev embeddings and the Aubin-Lions compactness results, 
we obtain that the pair \((\tilde{u}, \tilde{v})\) satisfies the weak formulation described in \eqref{W1} and \eqref{W2}. Then, one has that, as $m\to\infty$,
\begin{align}
    u_m &\to \tilde{u} \quad \text{in } L^4(\Omega_T), \label{29} \\
    v_m &\to \tilde{v} \quad \text{in } L^{4}(\Omega_T), \label{30} \\
    \nabla v_m &\to \nabla \tilde{v} \quad \text{in } L^{4}(\Omega_T;\mathbb R^2), \label{31} \\
    u_m \nabla v_m &\to \tilde{u} \nabla \tilde{v} \quad \text{in } L^2(\Omega_T;\mathbb R^2). \label{32}
\end{align}
Using the Aubin-Lions Lemma for $u_m$ with $X_0=H^1(\Omega)$, $X=L^4(\Omega)$, $X_1=(H^{1}(\Omega))'$, we obtain that $u_m \to \tilde{u}$ in $L^2(0,T;L^4(\Omega))$, as $m\to\infty$,  and as above it implies that $u_m \to \tilde{u}$ in $L^4(\Omega_T)$, as $m\to\infty$. 
It follows that $u_m \to \tilde{u}$ in $L^2(\Omega_T)$, as $m\to\infty$. In addition, we have that
\begin{align}\label{conv-4}
    \int_{\Omega_T}|u_m^2-\tilde u^2|^2\;dtdx=&\int_{\Omega_T}|u_m-\tilde u|^2|u_m+\tilde u|^2\;dtdx\notag\\
    \le &\|u_m-\tilde u\|_{L^4(\Omega_T)}^2\|u_m+\tilde u\|_{L^4(\Omega_T)}^2\notag\\
    \le & C(\tilde u)\|u_m-\tilde u\|_{L^4(\Omega_T)}^2,
\end{align}
where $C(\tilde u)$ is a constant depending only on $\tilde u$ but is independent of $m$ (here we have used the fact that $(u_m)$ is bounded in $L^4(\Omega_T))$. Taking the limit of \eqref{conv-4} as $m\to\infty$, we can deduce that $u_m^2\to \tilde u^2$ in $L^2(\Omega_T)$, as $m\to\infty$, hence weakly. That is,
\begin{equation*}
    \lim_{n\to\infty}\int_{\Omega_T}u_m^2\varphi\;dtdx=\int_{\Omega_T}\tilde u^2\varphi\;dtdx .
\end{equation*}

Let $\hat v$ be the solution of the second equation in \eqref{1} with source $\tilde u$ and initial datum $v_0$.  
Set $w_m:=v_m-\hat v$. Then, $w_m$ satisfies
\[
    \partial_t w_m - D_v \Delta w_m + \alpha w_m = \beta(u_m-\tilde u),\quad
    w_m(\cdot,0)=0,\;\;\partial_n w_m=0.
\]
By the maximal ($L^4-L^2$)–regularity, we have that, there is a constant $C>0$ independent of $m$ such that
\[
    \|w_m\|_{L^4(0,T;W^{2,4}(\Omega))}+\|\partial_t w_m\|_{L^4(\Omega_T)}
    \;\le C\,\|u_m-\tilde u\|_{L^4(\Omega_T)}.
\]
Since $u_m\to\tilde u$ in $L^4(\Omega_T)$, as $m\to\infty$, it follows that $w_m\to 0$ in $L^4(0,T;W^{2,4}(\Omega))$ and hence, in $L^4(\Omega_T)$, as $m\to\infty$. In addition, we have that $\Delta w_m\to 0$ and $\partial_tw_n\to 0$, in $L^4(\Omega_T)$, as $m\to\infty$. 
Hence, as $m\to\infty$,
\[
    v_m\to \hat v \;\text{ in } L^4(0,T;W^{2,4}(\Omega)) \mbox{ and in }L^4(\Omega_T) \quad\text{and}\quad \nabla v_m\to \nabla\hat v
    \quad\text{in } L^4(\Omega_T;\mathbb R^2)
\]
and
\[
    \Delta v_m\to\Delta \hat v\;\mbox{ and } \partial_t v_m\to \partial_t\hat v\;\mbox{ in } L^4(\Omega_T).
\]
The uniqueness of the limit shows that $\hat v=\tilde v$ a.e. in $\Omega_T$.

In particular, using \eqref{28}, \eqref{29}, \eqref{31} and \eqref{32}, the limit of the nonlinear terms in \eqref{eq:system_m} can be controlled as follows:
\begin{align}\label{33} 
    f_m u_m \nabla v_m &\rightharpoonup \tilde{f} \tilde{u} \nabla \tilde{v} \quad \text{in } L^{\frac{4}{3}}(\Omega_T;\mathbb R^2), \mbox{ as } m\to\infty.
\end{align}
We conclude that, as $m\to\infty$,
\[
    \int_{\Omega_T} f_m u_m \nabla v_m. \nabla\varphi \;dtdx  \to \int_{\Omega_T}\tilde{f} \tilde{u} \nabla \tilde{v}. \nabla\varphi\;dt  dx, \quad \forall \varphi \in L^4(0,T;W^{1,4}(\Omega)).
\] 

Finally, combining all the above convergences, we have that, for all  $\varphi \in L^4(0,T;W^{1,4}(\Omega))$,
\begin{align*}
    \int_0^T \langle \partial_t \tilde{u}, \varphi \rangle_{(H^{1}(\Omega))',H^1(\Omega)}\, dt 
    &+ \int_{\Omega_T}  D_{u} \nabla \tilde{u} \cdot \nabla \varphi  \, dtdx \\
    =& \int_{\Omega_T} (r \tilde{u}-\mu\tilde u^2) \varphi\, dtdx 
    + \int_{\Omega_T} f \tilde{u} \nabla \tilde{v} \cdot \nabla \varphi\, dtdx,
\end{align*}
and
\[
    \partial_t \tilde{v}-D_v \Delta\tilde{v}+\alpha \tilde{v}=\tilde{u}.
\]
Moreover, using also the Aubin-Lions and Simon \cite{simon1986compact} results mentioned above, 
we obtain that \( (u_m(0),  v_m(0)) \to (\tilde{u}(0), \tilde{v}(0)) \) in \( (H^{1}(\Omega))' \times L^2(\Omega) \), as $m\to\infty$. 
Since \( u_m(0) = u_0 \text{ in } L^4(\Omega) \), \( v_m(0) = v_0 \in W_{\textbf{n}}^{\frac{3}{2},4}(\Omega) \), for any $m \geq 1$, it follows that \( \tilde{u}(0) = u_0 \text{ a.e. in } \Omega\)  and \( \tilde{v}(0) = v_0 \text{ in } W_{\textbf{n}}^{\frac{3}{2},4}(\Omega)\). 
Thus, \( (\tilde u,\tilde v) \) satisfies the initial conditions given in \eqref{1}.
Passing to the limit in the weak formulation of the state system,
we conclude that $(\widetilde u,\widetilde v)$ is a weak solution
of \eqref{1} associated with the control $\widetilde f$. Hence,
$(\widetilde u,\widetilde v,\widetilde f)\in\mathcal S_{ad}.$
Moreover, by continuity of the trace operator,
$u_m(T)\rightharpoonup\widetilde u(T)
\quad\text{in }L^2(\Omega),$
and therefore
\[
    \|\widetilde u(T)-u_T\|_{L^2(\Omega)}^2
    \le
    \liminf_{m\to\infty}
    \|u_m(T)-u_T\|_{L^2(\Omega)}^2.
\]
Combining this inequality with the convergence and weak lower semicontinuity properties established above for the other terms of $J$, we obtain
\begin{equation}\label{34}
    \inf_{(u,v,f)\in\mathcal S_{ad}}J(u,v,f)
    \le J(\widetilde u,\widetilde v,\widetilde f)
    \le \liminf_{m\to\infty}J(s_m)
    =\inf_{(u,v,f)\in\mathcal S_{ad}}J(u,v,f).
\end{equation}
Consequently,
\[
    J(\widetilde u,\widetilde v,\widetilde f)=\inf_{(u,v,f)\in\mathcal S_{ad}}J(u,v,f),
\]
and $(\widetilde u,\widetilde v,\widetilde f)$ is a global optimal
state-control triplet.
\end{proof}

\section{The Lagrange Multiplier Method: General Theorem and Applications}
\label{sec:lagrange_multipliers}

In the previous section we proved the existence of at least one solution of the optimal control problem. In this section we derive the optimality system using the Lagrange multiplier method in Banach spaces. This method has the advantage of being a systematic and rigorous way to include state constraints to the optimization problem. Moreover, it allows the study of the regularity of the dual variables (which correspond to the Lagrange multipliers). We refer to Zowe \& Kurcyusz \cite{Zowe_multipliers} and Tr\"oltzsch \cite[Chapter 6]{Troltzsch_book} for more details on this topic.

\subsection{Deriving the optimality system}
To obtain the first-order necessary optimality conditions we use the theory of Lagrange multipliers on Banach spaces. 
First we recall some general definitions.

Consider the following optimization problem:
\begin{equation}\label{eq:probl_lagrange_optim}
    \min_{s \in \mathbb{M}} \mathbb J(s) \quad \text{subject to } G(s)=0,    
\end{equation}
where $\mathbb J : \mathbb{M} \to \R$ is a functional, $G:\mathbb{X} \to \mathbb{Y}$ is an operator, $\mathbb{X}$ and $\mathbb{Y}$ are Banach spaces and $\mathbb{M}$ is a non-empty closed and convex subset of $\mathbb{X}$. 

The \emph{admissible set} $\mathcal{S}$ of the problem \eqref{eq:probl_lagrange_optim} is defined by
$$
    \mathcal{S} = \{ s \in \mathbb{M} \ : \ G(s)=0 \}.
$$

The associated \emph{Lagrangian functional}  $\mathcal{L}:\mathbb{X} \times \mathbb{Y}' \to \R$ related to \eqref{eq:probl_lagrange_optim} is given by
$$
    \mathcal{L}(s,\xi) = \mathbb J(s) - \langle \xi, G(s) \rangle_{\mathbb{Y}',\mathbb{Y}},
$$
where $\mathbb{Y}'$ denotes the dual of $\mathbb{Y}$. In this section, we use $\langle \cdot, \cdot \rangle_{\mathbb{Y}',\mathbb{Y}}$ to denote the duality and $(\cdot,\cdot)_{L^2(\Omega_T)}$ to denote the $L^2$-scalar product.

\begin{definition}
    We introduce the following well-known notions.
    \begin{enumerate}
        \item[(a)]  We say that \(\tilde s\in\mathcal S\) is a local minimizer of \eqref{eq:probl_lagrange_optim} if there exists a neighbourhood \(U\) of \(\tilde s\) in \(\mathbb X\) such that
        \[
            \mathbb J(\tilde s)\leq \mathbb J(s)
            \quad
            \text{for all } s\in \mathcal S\cap U .
        \]
        \item[(b)] Let $\tilde s \in \mathcal{S}$ be a local optimal solution of  \eqref{eq:probl_lagrange_optim}. Suppose that $\mathbb{J}$ and $G$ are Fréchet differentiable at $\tilde s$. Then, $\xi \in \mathbb{Y}'$ is called a \emph{Lagrange multiplier at} $\tilde s$ if
        $$
            \mathbb J'(\tilde s)[z] - \langle \xi, G'(\tilde s)[z] \rangle_{\mathbb{Y}',\mathbb{Y}} \geq 0,  \quad \forall z \in \mathcal{C}(\tilde s),
        $$
        where $\mathcal{C}(\tilde s):= \{ \theta(s-\tilde s): \ s \in \mathbb{M}, \theta \geq 0 \}$ denotes the conical hull of $\tilde s$ in $\mathbb{M}$.     
        \item[(c)] A point $\tilde s \in \mathcal{S}$ is said to be a \emph{regular point} if
        $$
            G'(\tilde s)[\mathcal{C}(\tilde s)] = \mathbb{Y}.
        $$
    \end{enumerate}
\end{definition}

The notion of regular points is important because, under the Fréchet differentiability of $\mathbb J$ and $G$, the general theory implies the existence of Lagrange multipliers at that point (see e.g. \cite[Theorem 3.1]{Zowe_multipliers} and \cite[Chapter 6]{Troltzsch_book}).

We will apply the Lagrange multiplier method to our problem. 
We recall that the spaces $W_4$ and $X_4$ were given in Definition \ref{def:weak-solution} and $\mathcal{F} \subset L^{4}(\Omega_T)$ was defined in \eqref{eq:def_F}. Our optimal control problem is the following.
\begin{equation}\label{eq:optimal_problem}
    \begin{cases}
        \text{Find } (u,v,f) \in W_4 \times X_4 \times \mathcal{F} \text{ minimizing the functional } J \text{ defined in \eqref{cout}} \\
        \text{subject to the condition that } (u,v)  \text{ is a weak solution of   \eqref{1}} \text{ with control } f.    
    \end{cases}
\end{equation}

Throughout the rest of this section, to simplify the notations we shall simply denote the duality pairing $ \langle \cdot,\cdot \rangle_{L^2(0,T;(H^{1}(\Omega))'),L^2(0,T;H^{1}(\Omega))}$ by $\langle \cdot,\cdot\rangle$.

We consider the Banach spaces
$$
    \mathbb{X} := W_4 \times \widetilde X_{4} \times L^4(\Omega_T), \quad \mathbb{Y}:= L^2(0,T; (H^{1}(\Omega))') \times L^4(\Omega_T),
$$
where
$$
    \widetilde X_{4} = \{ v \in X_4 \ : \ \partial_\nu v |_{\partial \Omega} =0 \},
$$
and the operator $G=(G_1,G_2) : \mathbb{X} \to \mathbb{Y}$, i.e. $G_1 : \mathbb{X} \to L^2(0,T; (H^{1}(\Omega))')$ and $G_2 : \mathbb{X} \to L^4(\Omega_T)$, are given for every $s=(u,v,f)\in\mathbb X$ and $\varphi \in L^2(0,T;H^{1}(\Omega))$
by
\begin{equation}\label{eq:def_G1_G2}
    \begin{cases}
        \langle G_1(s),\varphi \rangle = \langle \partial_t u, \varphi \rangle  + (D_u \nabla u - f u \nabla v, \nabla \varphi)_{L^2(\Omega_T)} + (-ru +\mu u^2,\varphi)_{L^2(\Omega_T)}, \\
        G_2(s) = \partial_t v - D_v \Delta v + \alpha v - \beta u.
    \end{cases}  
\end{equation}

The set $\mathbb{M} := (\hat u,\hat v,0) + \hat W_4 \times \hat X_{4} \times \mathcal{F}$ is a closed, convex subset of $\mathbb{X}$,
where $(\hat u,\hat v)$ is the global weak solution of \eqref{1} without control, that is, taking $f=0$, and
$$
    \hat W_4 = \{ u \in W_4 \ : \ u(0)=0
    \}, \quad \hat X_{4} = \{ v \in X_{4} \ : \ v(0)=0, \ \partial_\nu v |_{\partial \Omega} = 0 \}.
$$

Thus, for our optimal control problem \eqref{eq:optimal_problem} the admissible set $\mathcal{S}_{ad}$ is given by
\begin{equation}\label{eq:admissible_set_lagran}
    \mathcal{S}_{ad} = \{ s=(u,v,f) \in \mathbb{M} \ : \ G(s)=0 \}.    
\end{equation}

First we need to show the Fréchet differentiability of the functional $J$ and the operator $G$. This is proved in Appendices \ref{appendix_b_J} and \ref{appendix_c_G}. Then, Lemma \ref{lem:regular_point} in Appendix \ref{appendix_reg_point} 
shows that a local optimal solution $\tilde s=(\tilde u,\tilde v, \tilde f) \in \mathcal{S}_{ad}$ is a regular point. 
The proof consists in showing the existence of solutions of a PDE in the space $\hat W_4 \times \hat X_4$ as we did  in Sections \ref{sec:introduction} and \ref{sec:proof_theorem_wellp} for the nonlinear system \eqref{1}.

All these ingredients are used in the following theorem that allows us to prove the existence of a Lagrange multiplier associated to a local  optimal solution $\tilde s=(\tilde u,\tilde v, \tilde f) \in \mathcal{S}_{ad}$ of the control problem \eqref{eq:optimal_problem}.

\begin{theorem}\label{teo:exist_lagr}
    Let $\tilde s=(\tilde u, \tilde v,\tilde f) \in \mathcal{S}_{ad}$ be a local optimal solution of  \eqref{eq:optimal_problem}. Then, there exists a Lagrange multiplier $(\lambda,\eta) \in \mathbb{Y}' = L^2(0,T; H^1(\Omega))\times L^{4/3}(\Omega_T)$ such that, for all $(U,V,F) \in \hat W_4 \times \hat X_4 \times \mathcal{C}(\tilde f)$, we have that
    \begin{equation}\label{eq:full_condition}
        \begin{aligned}
            &2\gamma_u \int_{\Omega_T} (\tilde u - u_d)\, U \ dx dt
            \;+\; 4\gamma_f \int_{\Omega_T} \tilde f^{\,3} F \ dx dt
             \\
             &+\, 2\gamma_{\text{taxis}} 
            \int_{\Omega_T} (\tilde f \tilde u \nabla \tilde v)\cdot
            \big( \tilde f \tilde u \nabla V 
            + \tilde f U \nabla \tilde v
            + F \tilde u \nabla \tilde v \big) \ dx dt \\
            &+\, 2\gamma_T \int_\Omega (\tilde u(T) - u_d(T))\, U(T) \ dx \\
            &- \int_0^T \langle \partial_t U , \lambda \rangle \ dt - \int_{\Omega_T}\big( D_u \nabla U 
            - \tilde f U \nabla \tilde v
            - \tilde f \tilde u \nabla V
            - F \tilde u \nabla \tilde v \big)\cdot
            \nabla \lambda\;dxdt \\
            & + \int_{\Omega_T}(-r U + 2\mu \tilde u U) \lambda\;dxdt - \int_{\Omega_T} 
            \big( \partial_t V - D_v \Delta V + \alpha V - \beta U \big) \eta \ dx dt \;\ge 0 .
        \end{aligned}
    \end{equation}
\end{theorem}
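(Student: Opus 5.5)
The plan is to apply the abstract Lagrange multiplier theorem of Zowe--Kurcyusz \cite[Theorem 3.1]{Zowe_multipliers} (see also \cite[Chapter 6]{Troltzsch_book}) to the problem \eqref{eq:probl_lagrange_optim}. We take $\mathbb J=J$, the operator $G=(G_1,G_2)$ of \eqref{eq:def_G1_G2}, and the closed convex set $\mathbb M$. This requires three ingredients at the local minimizer $\tilde s$.

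\emph{(i) Fréchet differentiability of $J$ and $G$.} This is proved in Appendices \ref{appendix_b_J} and \ref{appendix_c_G}. We check that it is compatible with the chosen spaces. The quartic term $\int\|f\|^4_{L^4}$ is differentiable on $L^4(\Omega_T)$. The taxis term is a composition of the trilinear map $(u,v,f)\mapsto fu\nabla v$ with the squared $L^2$ norm. The trilinear map is bounded from $W_4\times X_4\times L^4(\Omega_T)$ into $L^2(\Omega_T)$ by Hölder, using $u\in L^\infty(0,T;L^4)$ and $\nabla v\in L^\infty(0,T;L^q)$ for all $q<\infty$; I note that $f\in L^4$ forces $u\nabla v$ into $L^4(\Omega_T)$, which is exactly what these embeddings give. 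The terminal term uses $W_4\hookrightarrow C([0,T];L^2(\Omega))$. For $G_1$ we similarly need $fu\nabla v\in L^2(\Omega_T)$ and $u^2\in L^2(0,T;(H^1)')$, both continuous bilinear or trilinear forms. Hence $G$ is a polynomial map with bounded multilinear pieces and is $C^1$.

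\emph{(ii) Regularity of $\tilde s$.} The tangent directions are $(U,V,F)\in\hat W_4\times\hat X_4\times\mathcal C(\tilde f)$. It suffices to take $F=0$ and show that for every $(g_1,g_2)\in\mathbb Y$ the linearized system
\[
\partial_tU-D_u\Delta U+\nabla\cdot(\tilde fU\nabla\tilde v+\tilde f\tilde u\nabla V)-rU+2\mu\tilde uU=g_1,\qquad \partial_tV-D_v\Delta V+\alpha V-\beta U=g_2,
\]
with zero data, has a solution in $\hat W_4\times\hat X_4$ (Lemma \ref{lem:regular_point}). I would mimic Section \ref{sec:proof_theorem_wellp}, but here the problem is linear, so a Galerkin or fixed-point argument with the energy estimates of the uniqueness proof suffices for the $L^2(0,T;H^1)$ level. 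Maximal $L^4$ regularity (Theorem \ref{regular}) then handles $V$.

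This step is where I expect the main obstacle. The $L^\infty(0,T;L^4)$ bound for $U$ with only $g_1\in L^2(0,T;(H^1)')$ forcing is not available in general. If it fails, I would enlarge the multiplier framework: replace $W_4$ in the regularity statement by the space of weak solutions, or use a weaker notion of regular point relative to $\mathbb M-\tilde s$. The Zowe--Kurcyusz condition only requires $G'(\tilde s)(\mathcal C(\tilde s))=\mathbb Y$, so the surjectivity onto $L^2(0,T;(H^1)')$ is the crux and needs the $L^4$ estimate from testing with $U^3$ in a regularized way.

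\emph{(iii) Conclusion.} Granting (i) and (ii), the theorem yields $\xi=(\lambda,\eta)\in\mathbb Y'=L^2(0,T;H^1(\Omega))\times L^{4/3}(\Omega_T)$ such that $J'(\tilde s)[z]-\langle\xi,G'(\tilde s)[z]\rangle\ge0$ for all $z\in\mathcal C(\tilde s)$. The set $\mathcal C(\tilde s)$ contains $\hat W_4\times\hat X_4\times\mathcal C(\tilde f)$, since $\mathbb M$ is an affine translate of this product. It remains to write out the two derivatives explicitly. Differentiating $J$ gives the tracking, quartic, taxis (product rule in $(U,V,F)$) and terminal terms. Differentiating $G_1$ gives the $\lambda$-terms, including $2\mu\tilde uU$ and the three products of the trilinear drift. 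Differentiating $G_2$ gives the $\eta$-term. Collecting these yields exactly \eqref{eq:full_condition}.
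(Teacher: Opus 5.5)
Your architecture matches the paper's. Its proof of Theorem~\ref{teo:exist_lagr} is \cite[Theorem 6.3]{Troltzsch_book} applied with Lemmas~\ref{lem:der_J}, \ref{lem:der_G} and the regular-point Lemma~\ref{lem:regular_point}, followed by substitution of the derivatives. Your identification $\mathcal C(\tilde s)=\hat W_4\times\hat X_4\times\mathcal C(\tilde f)$ is also correct.

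\textbf{The gap.} It is step (ii), which you flag but leave open, and it cannot be closed in these spaces: $G'(\tilde s)$ does \emph{not} map $\hat W_4\times\hat X_4\times\mathcal C(\tilde f)$ onto $\mathbb Y$. Since $\hat W_4\subset C([0,T];L^2(\Omega))\cap L^\infty(0,T;L^4(\Omega))$ and $L^4$-balls are closed in $L^2(\Omega)$, every $U\in\hat W_4$ satisfies $U(t)\in L^4(\Omega)$ for \emph{every} $t$.

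Take $W\in L^2(0,T;H^1(\Omega))$ with $\partial_tW\in L^2(0,T;(H^1(\Omega))')$, $W(0)=0$ and $W(t_0)\in L^2(\Omega)\setminus L^4(\Omega)$ for some $t_0$. This is possible because the time-trace space of this class is $L^2(\Omega)$. Put $g_1:=\partial_tW-D_u\Delta W$ in the weak Neumann sense, so $g_1\in L^2(0,T;(H^1(\Omega))')$.

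Suppose $G'(\tilde s)[U,V,F]=(g_1,0)$. Then $U-W$ solves the Neumann heat equation with zero datum and forcing $-\nabla\cdot(\tilde fU\nabla\tilde v+\tilde f\tilde u\nabla V+F\tilde u\nabla\tilde v)+rU-2\mu\tilde uU$. This forcing lies in $L^\infty(0,T;W^{-1,p}(\Omega))+L^\infty(0,T;L^2(\Omega))$ for all $p<4$. Taking $p$ close to $4$, the $L^p$--$L^4$ smoothing of the heat semigroup (singularities $\tau^{-1/2-(1/p-1/4)}$ and $\tau^{-1/4}$, both integrable) gives $U-W\in C([0,T];L^4(\Omega))$. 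Hence $W(t_0)\in L^4(\Omega)$, a contradiction.

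This shows why your proposed tool fails. The $U^3$ test requires bounding $\langle g_1,U^3\rangle$, hence $\|U^2\nabla U\|_{L^2(\Omega)}$, which the dissipation $\|\nabla(U^2)\|_{L^2(\Omega)}^2$ does not control without an $L^\infty$ bound on $U$. The maximal-regularity step in Lemma~\ref{lem:regular_point}, on which the paper relies, has the same problem: it would need $g_u$ in something like $L^p(0,T;W^{-1,4}(\Omega))$ with $p>2$, not merely in $L^2(0,T;(H^1(\Omega))')$.

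\textbf{A fix.} Your first fallback works, but it must be carried out in $\mathbb X$ itself:
\begin{itemize}
\item Replace $W_4$ by the energy space $\mathcal W:=\{u\in L^2(0,T;H^1(\Omega)):\ \partial_tu\in L^2(0,T;(H^1(\Omega))')\}$.
\item Put the control component in $L^\infty(\Omega_T)$ instead of $L^4(\Omega_T)$. This is harmless, since the box constraints give $\mathcal F\subset L^\infty(\Omega_T)$, and it keeps $fu\nabla v\in L^2(\Omega_T)$ with $u$ only in $\mathcal W\hookrightarrow L^4(\Omega_T)$ and $\nabla v\in L^\infty(0,T;L^4(\Omega))$.
\end{itemize}
Then $J$ and $G$ remain bounded polynomial maps into the same $\mathbb Y$, with the same dual $\mathbb Y'$.

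Surjectivity with $F=0$ now follows from the energy estimates of the uniqueness proof (test with $U$ and with $V-\Delta V$). Add maximal $L^4$-regularity for $V$, whose source $\beta U+g_2$ lies in $L^4(\Omega_T)$; this is exactly the level you said suffices.

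Local optimality transfers to the new topology via uniqueness and an $L^\infty(0,T;L^4(\Omega))$ stability estimate for $f\mapsto(u,v)$. That estimate holds because differences of states carry no rough forcing. The resulting inequality then holds in particular for $U\in\hat W_4$.

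\textbf{A bookkeeping slip.} Collecting terms produces $-\int_{\Omega_T}(-rU+2\mu\tilde uU)\lambda\,dx\,dt$ (from $-\langle\lambda,G_1'(\tilde s)[z]\rangle$) and a terminal term with $u_T$. So you recover \eqref{eq:full_condition} only up to these slips in its statement; Corollary~\ref{cor:opt_system1} uses the correct sign.
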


\begin{proof}
    From Lemma \ref{lem:regular_point} we have that $\tilde s \in \mathcal{S}_{ad}$ is a regular point. By \cite[Theorem 6.3, p. 330]{Troltzsch_book}, if $J$ is Fréchet differentiable and $G$ is continuously-Fréchet differentiable, and if $\tilde s$ is a regular point, 
    then there exists a Lagrange multiplier $(\lambda,\eta) \in \mathbb{Y}'$ at $\tilde s$ such that
    $$
        J'(\tilde s)[z] - \langle \lambda, G_1'(\tilde s)[z] \rangle_{L^2(0,T;H^{1}(\Omega)),L^2(0,T;(H^{1}(\Omega))')} - \langle \eta, G_2'(\tilde s)[z] \rangle_{L^{4/3}(\Omega_T),L^4(\Omega_T)} \geq 0,
    $$
    for all $z=(U,V,F) \in \hat W_4 \times \hat X_4 \times \mathcal{C}(\tilde f)$. Substituting the derivatives of $J$ and $G$ obtained in  Lemmas \ref{lem:der_J} and \ref{lem:der_G}, we get the result.
\end{proof}

From Theorem \ref{teo:exist_lagr}, we derive an optimality system.

\begin{corollary}\label{cor:opt_system1}
    Let $(\tilde u,\tilde v, \tilde f) \in \mathcal{S}_{ad}$ be a local optimal solution of \eqref{eq:optimal_problem}. Then, any Lagrange multiplier $(\lambda,\eta) \in \mathbb{Y}' = L^2(0,T;H^1(\Omega))\times L^{4/3}(\Omega_T)$ provided by Theorem \ref{teo:exist_lagr} satisfies the following system, for all $(U,V)\in \hat W_4 \times \hat X_4$:
    \begin{equation}\label{eq:optimality}
        \begin{cases}
            \displaystyle    \int_0^T \langle \partial_t U,\lambda \rangle dt + \int_{\Omega_T} (D_u \nabla U - \tilde f U \nabla \tilde v)\cdot \nabla \lambda \ dx dt \\
            \displaystyle    \qquad + \int_{\Omega_T}(-rU+2\mu\tilde u U)\lambda\;dxdt
            - \beta\int_{\Omega_T} U\eta \ dx dt
            = 2\gamma_u \int_{\Omega_T} (\tilde u-u_d)U \ dx dt \\
            \displaystyle   \qquad + 2 \gamma_{\text{taxis}} \int_{\Omega_T} |\tilde f|^2 \tilde u U |\nabla \tilde v|^2 dx dt + 2\gamma_T \int_\Omega (\tilde u(T) - u_d(T))U(T) \ dx,  \vspace{2mm}\\
            \displaystyle    \int_{\Omega_T} (\partial_t V - D_v \Delta V + \alpha V)\eta \ dx dt - \int_{\Omega_T} \tilde f \tilde u \nabla V \cdot \nabla \lambda \ dx dt \\
            \displaystyle   \qquad = 2 \gamma_{\text{taxis}} \int_{\Omega_T} |\tilde f|^2 |\tilde u|^2 \nabla \tilde v \cdot \nabla V \ dx dt, 
        \end{cases}
    \end{equation}
    and the optimality condition
    \begin{equation}\label{eq:optimality_cond}
        \int_{\Omega_T} \big(4 \gamma_f \tilde f^3 + \tilde u \nabla \tilde v \cdot \nabla \lambda + 2\gamma_{\text{taxis}} \tilde f |\tilde u|^2 |\nabla \tilde v|^2\big)(f-\tilde f) \ dx dt \geq 0, \quad f \in \mathcal{F}.    
    \end{equation}
\end{corollary}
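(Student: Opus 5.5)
The plan is to exploit the fact that the variational inequality \eqref{eq:full_condition} of Theorem \ref{teo:exist_lagr} is linear in the direction $z=(U,V,F)$, and that the admissible directions decouple. The set $\hat W_4\times\hat X_4$ is a linear space, while $\mathcal C(\tilde f)$ is a cone that always contains $F=0$. First fix $F=0$ and take $(U,0,0)$ with $U\in\hat W_4$ arbitrary. Since $-U$ is also admissible, the inequality $\ge 0$ holds for both $\pm U$. Hence the $U$-part of \eqref{eq:full_condition} must vanish identically.

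Collecting the $U$-terms gives the following pieces:
\begin{itemize}
\item the tracking term $2\gamma_u\int(\tilde u-u_d)U$;
\item the taxis term $2\gamma_{\rm taxis}\int \tilde f\tilde u\nabla\tilde v\cdot\tilde fU\nabla\tilde v=2\gamma_{\rm taxis}\int|\tilde f|^2\tilde uU|\nabla\tilde v|^2$;
\item the terminal term $2\gamma_T\int_\Omega(\tilde u(T)-u_d(T))U(T)$;
\item the multiplier terms $-\int\langle\partial_tU,\lambda\rangle-\int(D_u\nabla U-\tilde fU\nabla\tilde v)\cdot\nabla\lambda+\int(-rU+2\mu\tilde uU)\lambda$;
\item the coupling term $+\beta\int U\eta$, coming from $-\int(-\beta U)\eta$.
\end{itemize}
Setting this sum equal to zero and moving the multiplier terms to one side yields the first equation of \eqref{eq:optimality}, modulo sign bookkeeping. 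I will check the sign of the reaction term $(-rU+2\mu\tilde uU)\lambda$ against $G_1'$, since in \eqref{eq:full_condition} it enters with the opposite sign to $\langle\partial_tU,\lambda\rangle$, and adjust to the paper's convention. The terminal term is well defined because $U\in W_4\hookrightarrow C([0,T];L^2(\Omega))$ by Lions--Magenes.

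Next, take $(0,V,0)$ with $V\in\hat X_4$ arbitrary. Again $\pm V$ are admissible, so the $V$-part vanishes. This $V$-part consists of $2\gamma_{\rm taxis}\int|\tilde f|^2|\tilde u|^2\nabla\tilde v\cdot\nabla V$, the term $+\int\tilde f\tilde u\nabla V\cdot\nabla\lambda$, and $-\int(\partial_tV-D_v\Delta V+\alpha V)\eta$. Rearranging gives the second equation of \eqref{eq:optimality}.

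It remains to check that every integral is finite, so that the splitting is legitimate. By Theorem \ref{mainresult1} we have $\tilde u\in L^\infty(0,T;L^4(\Omega))$, and in 2D $\nabla\tilde v\in L^\infty(0,T;L^q(\Omega))$ for all $q<\infty$. Also $\tilde f\in L^\infty(\Omega_T)$, $\lambda\in L^2(0,T;H^1(\Omega))$, and $\eta\in L^{4/3}(\Omega_T)$ pairs against the $L^4$ quantities. Hölder's inequality then makes each term well defined.

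Finally, take $(0,0,F)$ with $F=f-\tilde f$, $f\in\mathcal F$, which lies in $\mathcal C(\tilde f)$. Using linearity, or equivalently taking the full direction and subtracting the two vanishing parts, the remaining $F$-terms are:
\begin{itemize}
\item $4\gamma_f\int\tilde f^3F$;
\item $2\gamma_{\rm taxis}\int\tilde f\tilde u\nabla\tilde v\cdot F\tilde u\nabla\tilde v=2\gamma_{\rm taxis}\int\tilde f|\tilde u|^2|\nabla\tilde v|^2F$;
\item $+\int F\tilde u\nabla\tilde v\cdot\nabla\lambda$, from $-\int(-F\tilde u\nabla\tilde v)\cdot\nabla\lambda$.
\end{itemize}
Their sum is $\ge0$, which is exactly \eqref{eq:optimality_cond}.

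The main obstacle is not conceptual but technical. It lies in verifying integrability of the products: $\tilde u\nabla\tilde v\cdot\nabla\lambda\,F$ needs $F\in L^4$, $\tilde u\nabla\tilde v\in L^4(0,T;L^{4-})$ and $\nabla\lambda\in L^2$, a borderline Hölder count. I would handle it by using $\tilde f,F\in L^\infty$ from the box constraints in $\mathcal F$. Beyond that, the work is consistent sign bookkeeping from $G'$ as computed in Lemma \ref{lem:der_G}.
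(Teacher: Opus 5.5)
Your proposal is correct and follows the paper's own argument: you specialize the direction in \eqref{eq:full_condition} to $(U,0,0)$, $(0,V,0)$ and $(0,0,\theta(f-\tilde f))$ and rearrange. Your extra steps are points the paper leaves implicit, and you handle them correctly. These are: using $\pm U$, $\pm V$ (since $\hat W_4,\hat X_4$ are linear spaces) to upgrade the inequalities to equalities; taking the sign of the reaction term from $G_1'$ in Lemma \ref{lem:der_G} rather than from \eqref{eq:full_condition} as printed, where it appears with the opposite sign; and using $F\in L^\infty(\Omega_T)$ for integrability.
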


\begin{proof}
    Taking $(V,F)=(0,0)$ in \eqref{eq:full_condition} we obtain the first equation in \eqref{eq:optimality}. Similarly, taking $(U,F)=(0,0)$ in \eqref{eq:full_condition} we get the second equation of \eqref{eq:optimality}. Finally, taking $(U,V)=(0,0)$ in \eqref{eq:full_condition}  we get
    \begin{equation}\label{optimalitycond}
        4\gamma_f \int_{\Omega_T} F \tilde f^3 \ dx dt + \int_{\Omega_T} F \tilde u \nabla \tilde v \cdot \nabla \lambda \ dx dt + 2\gamma_{\text{taxis}} \int_{\Omega_T} F \tilde f |\tilde u|^2 |\nabla \tilde v|^2 \ dx dt \geq 0, \quad \forall F \in C(\tilde f).
    \end{equation}
    Taking $F=\theta(f-\tilde f) \in \mathcal{C}(\tilde f)$, for $f \in \mathcal{F}$ and $\theta \geq 0$ in \eqref{optimalitycond}, we get  \eqref{eq:optimality_cond}.
\end{proof}

Now we show that the Lagrange-multiplier $(\lambda,\eta) \in \mathbb{Y}'$ has more regularity. In fact, \eqref{eq:optimality} corresponds to the notion of a very weak solution of the system \eqref{eq:optimality3} bellow.

\begin{proposition} \label{prop:adjoint_regularity}
    Let $(\tilde u,\tilde v, \tilde f) \in \mathcal{S}_{ad}$ be a local optimal solution of  \eqref{eq:optimal_problem}. Then, the system
    \begin{equation}\label{eq:optimality3}
    \begin{cases}
        - \partial_t \lambda - D_u \Delta \lambda  - \tilde f \nabla \tilde v \cdot \nabla \lambda -r\lambda +2\mu\tilde u \lambda 
        - \beta \eta = 2\gamma_u (\tilde u-u_d) +
        2 \gamma_{\text{taxis}} |\tilde f|^2 \tilde u  |\nabla \tilde v|^2 & \text{in } \Omega_T,\\ 
        -\partial_t \eta - D_v \Delta \eta + \alpha \eta + \nabla \cdot (\tilde f \tilde u \nabla \lambda) = - 2 \gamma_{\text{taxis}} \nabla \cdot ( |\tilde f|^2 |\tilde u|^2 \nabla \tilde v) & \text{in } \Omega_T,\\
        \partial_\nu \lambda = 0, \quad \partial_\nu \eta = 0 & \text{on } \Gamma_T,\\
        \lambda(T) = 2\gamma_T(\tilde u(T)-u_d),\quad \eta(T) = 0 & \text{in } \Omega,
    \end{cases}
    \end{equation}
    has a unique weak solution $(\lambda,\eta)$ satisfying
    \begin{equation}\label{eta-lam}
        \lambda \in L^\infty(0,T;H^1(\Omega)) \cap L^2(0,T;H^2(\Omega)), \quad  \eta \in L^\infty(0,T;L^{2}(\Omega)) \cap L^{2}(0,T;H^{1}(\Omega)).
    \end{equation}
\end{proposition}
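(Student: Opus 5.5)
Our plan is to treat \eqref{eq:optimality3} as a linear backward parabolic system with coefficients determined by the optimal triple $(\tilde u,\tilde v,\tilde f)$. We first construct a weak solution with the regularity \eqref{eta-lam} by a Galerkin scheme, using the eigenfunctions of the Neumann Laplacian. We then identify it with the Lagrange multiplier of Corollary~\ref{cor:opt_system1} through a uniqueness argument for very weak solutions. After the change of variable $t\mapsto T-t$ the system becomes forward in time with initial data $\lambda(0)=2\gamma_T(\tilde u(T)-u_d)$ and $\eta(0)=0$. The coefficients have the following regularity:
\[
\tilde f\in L^\infty(\Omega_T),\quad \tilde u\in L^\infty(0,T;L^4(\Omega))\cap L^2(0,T;H^1(\Omega)),\quad \nabla\tilde v\in L^\infty(0,T;L^q(\Omega))\ \text{for all } q<\infty .
\]
The last property comes from $\tilde v\in C([0,T];W^{3/2,4}_{\mathbf n}(\Omega))$ and the embedding $W^{1/2,4}\hookrightarrow L^q$. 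The right-hand sides are $2\gamma_u(\tilde u-u_d)+2\gamma_{\rm taxis}\tilde f^2\tilde u|\nabla\tilde v|^2\in L^2(\Omega_T)$ and $\nabla\cdot G$ with $G:=\tilde f^2\tilde u^2\nabla\tilde v\in L^2(\Omega_T)$. To get $\lambda(T)\in H^1(\Omega)$ we need $\tilde u(T)\in H^1(\Omega)$. This does not follow from $W_4$ alone, so we would invoke the local regularity \eqref{local-u-regularity} passed to the limit, or else accept $\lambda(T)\in L^2$ and obtain the $H^1$ bound only away from $T$. We flag this as a point to check.

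The a priori estimates proceed as follows. We test the $\lambda$-equation with $-\Delta\lambda$ (and with $\lambda$), and the $\eta$-equation with $\eta$. In the $\eta$-estimate, the coupling term $\int\tilde f\tilde u\nabla\lambda\cdot\nabla\eta$ is bounded by
\[
\|\tilde f\|_\infty\|\tilde u\|_{L^4}\|\nabla\lambda\|_{L^4}\|\nabla\eta\|_{L^2}.
\]
By Gagliardo--Nirenberg, $\|\nabla\lambda\|_{L^4}\le C\|\nabla\lambda\|_{L^2}^{1/2}\|\lambda\|_{H^2}^{1/2}$, and Young's inequality then gives
\[
\delta\|\nabla\eta\|_2^2+\delta\|\lambda\|_{H^2}^2+C_\delta\|\tilde u\|_{L^4}^4\|\nabla\lambda\|_2^2 .
\]
The source term $\int G\cdot\nabla\eta$ is absorbed in the same way. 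In the $\lambda$-estimate, the drift term $\int\tilde f\nabla\tilde v\cdot\nabla\lambda\,\Delta\lambda$ is bounded by
\[
\|\nabla\tilde v\|_{L^4}\|\nabla\lambda\|_{L^4}\|\Delta\lambda\|_{L^2}\le \delta\|\lambda\|_{H^2}^2+C_\delta\|\nabla\tilde v\|_{L^4}^4\|\nabla\lambda\|_2^2 .
\]
The term $\int\tilde u\lambda\Delta\lambda$ is handled with $\|\lambda\|_{L^4}$ and the $H^1$ norm. The coupling term $\beta\eta$ is bounded by $\|\eta\|_2\|\Delta\lambda\|_2$. Summing, we get for
\[
Y(t)=\|\lambda\|_{H^1}^2+\|\eta\|_{L^2}^2
\]
a differential inequality
\[
Y'+c\bigl(\|\lambda\|_{H^2}^2+\|\nabla\eta\|_2^2\bigr)\le C\bigl(1+\|\tilde u\|_{L^4}^4+\|\nabla\tilde v\|_{L^4}^4\bigr)Y+C\|\text{data}\|^2 ,
\]
whose coefficient lies in $L^1(0,T)$ (in fact in $L^\infty$). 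Gronwall's inequality yields uniform Galerkin bounds in the spaces \eqref{eta-lam}. It also gives bounds on $\partial_t\lambda\in L^2(\Omega_T)$ and $\partial_t\eta\in L^{4/3}(0,T;(H^1)')$, which is sufficient for compactness by Lemma~\ref{lem-21}. Since the system is linear, the limit passage needs only weak convergence.

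Uniqueness follows from the same energy inequality applied to the difference of two solutions with zero data, using Gronwall again. The identification step then goes as follows. For arbitrary $(g_1,g_2)$ we solve the forward linearized state system with zero initial data in $\hat W_4\times\hat X_4$; this solvability is exactly what Lemma~\ref{lem:regular_point} in Appendix~\ref{appendix_reg_point} provides. Integrating by parts in time and space shows that the multiplier $(\lambda,\eta)$ from \eqref{eq:optimality} and the weak solution of \eqref{eq:optimality3} coincide, since both satisfy the same duality identity for all $(U,V)$. This shows the multiplier has the regularity \eqref{eta-lam}.

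We expect the main obstacle to be the cross-coupling. The first-order term $\nabla\cdot(\tilde f\tilde u\nabla\lambda)$ in the $\eta$-equation forces us to control $\lambda$ at the $H^2$ level while $\eta$ is only controlled at the $H^1$ level. The estimates close only because of the 2D Gagliardo--Nirenberg interpolation together with $\tilde u\in L^\infty(L^4)$. If this interpolation were insufficient, we would first try testing the $\eta$-equation with a weighted combination, choosing the weight on the $\lambda$-estimate large enough to absorb the $\|\lambda\|_{H^2}^2$ terms.
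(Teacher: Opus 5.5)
Your plan follows the paper's proof: reverse time, build $(\lambda,\eta)$ by Faedo--Galerkin, test the $\lambda$-equation with $-\Delta\lambda$ and the $\eta$-equation with $\eta$, and close with 2D Gagliardo--Nirenberg and Gr\"onwall. In places you are more careful than the paper. Your claim $G=\tilde f^2\tilde u^2\nabla\tilde v\in L^2(\Omega_T)$ is correct, by interpolating $L^\infty(0,T;L^4)\cap L^2(0,T;H^1)\hookrightarrow L^4(0,T;L^s)$ for $s<8$; the paper instead bounds this source by $\|\tilde u\|_{H^1}^4$, which is not integrable in time. Your uniqueness argument and your identification with the multiplier of Corollary~\ref{cor:opt_system1}, via the surjectivity in Lemma~\ref{lem:regular_point}, are steps the paper leaves implicit.

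The point you flag, however, is a genuine gap, and neither of your remedies closes it. The condition $\lambda(T)\in H^1(\Omega)$ is necessary, not merely convenient. Indeed, \eqref{eta-lam} together with $\partial_t\lambda\in L^2(\Omega_T)$ (which you read off from the equation) gives $\lambda\in C([0,T];H^1(\Omega))$. So \eqref{eta-lam} forces $2\gamma_T(\tilde u(T)-u_T)\in H^1(\Omega)$; the target is written $u_d$ in \eqref{eq:optimality3}.

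Remedy (1) fails for two reasons. It overlooks the target, which is only assumed to lie in $L^2(\Omega)$. It also relies on \eqref{local-u-regularity}, which is proved only for the $\varepsilon$-frozen problem with $\varepsilon$-dependent constants. Even if that regularity held for $\tilde u$, it would only place $\tilde u(T)$ in the trace space $(W^{-1,4}(\Omega),W^{1,4}(\Omega))_{1-1/q,q}$, whose smoothness is $1-2/q<1$, not $H^1$.

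Remedy (2) proves a statement weaker than \eqref{eta-lam}, and it is not obtained by simply localizing your estimates. Your $\eta$-estimate uses $\|\nabla\lambda\|_{L^4}$, that is, $H^2$-level control of $\lambda$ all the way up to $t=T$, and that is exactly what is lost.

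The paper's proof makes the same tacit assumption: its Gr\"onwall step starts from $\|\lambda(0)\|_{H^1}$ after time reversal. What your argument actually establishes is the proposition under the extra hypothesis $2\gamma_T(\tilde u(T)-u_T)\in H^1(\Omega)$ (for instance $\gamma_T=0$). Without that hypothesis, \eqref{eta-lam} does not follow from the stated assumptions.
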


\begin{proof}
    Let $\tau=T-t$. Then the system \eqref{eq:optimality3} is equivalent to
    \begin{equation}\label{eq:optimality_s}
        \begin{cases}
            \partial_\tau \lambda - D_u \Delta \lambda  - \tilde f \nabla \tilde v \cdot \nabla \lambda -r\lambda +2\mu\tilde u \lambda 
            - \beta \eta = 2\gamma_u (\tilde u-u_d) +
            2 \gamma_{\text{taxis}} |\tilde f|^2 \tilde u  |\nabla \tilde v|^2, & \text{in } \Omega_T,\\ 
            \partial_\tau \eta - D_v \Delta \eta + \alpha \eta + \nabla \cdot (\tilde f \tilde u \nabla \lambda) = - 2 \gamma_{\text{taxis}} \nabla \cdot ( |\tilde f|^2 |\tilde u|^2 \nabla \tilde v), & \text{in } \Omega_T,\\
            \partial_\nu \lambda = 0, \quad \partial_\nu \eta = 0, & \text{on } \Gamma_T,\\
            \lambda(0) = 2\gamma_T(\tilde u(T)-u_d),\quad \eta(0) = 0, & \text{in } \Omega, 
        \end{cases}
    \end{equation}
    where, for simplicity, we used the same notations for the unknowns $(\lambda,\eta)$.
    The existence of solution to \eqref{eq:optimality_s} is obtained by the method of Faedo-Galerkin, using the following energy estimate.
    Multiplying the first equation in \eqref{eq:optimality_s} by $-\Delta \lambda$ and the second one by $\eta$, we get 
    \begin{equation*}
        \begin{split}
            &\frac{d}{d\tau}(\|\lambda\|_{H^1(\Omega)}^2 + \|\eta\|_{L^2(\Omega)}^2)+C(\|\lambda\|_{H^2(\Omega)}^2 + \|\eta\|_{H^1(\Omega)}^2)\\
            &\qquad \leq C(\|\eta\|_{L^2(\Omega)}^2 +  
            \|\tilde u\|_{L^4(\Omega)}\|\lambda\|_{L^4(\Omega)}\|\Delta \lambda\|_{L^2(\Omega)} + 
            \|\tilde f\|_{L^\infty(\Omega)}\|\lambda\|_{L^4(\Omega)} \|\Delta \tilde v\|_{L^4(\Omega)} \|\Delta \lambda\|_{L^2(\Omega)}) \\
            &\qquad \quad+ C\|\tilde u - u_d\|_{L^2(\Omega)}^2 + C\|\tilde f\|_{L^\infty(\Omega)}^2\|\tilde u\|_{L^4(\Omega)} \|\nabla \tilde v\|_{L^8(\Omega)}^2 \|\lambda \|_{H^2(\Omega)}\\
            &\qquad \quad + C\|\tilde f\|_{L^\infty(\Omega)} \|\tilde u\|_{L^4(\Omega)} \|\nabla \lambda\|_{L^4(\Omega)} \|\eta\|_{L^2(\Omega)} + C\|\tilde f\|_{L^\infty(\Omega)}^2\|\tilde u\|_{L^6(\Omega)}^2 \|\nabla \tilde v\|_{L^6(\Omega)} \|\nabla \eta \|_{L^2(\Omega)}. 
        \end{split}    
    \end{equation*}
    Thus, using Young's inequality and the embedding $H^1(\Omega) \hookrightarrow L^q(\Omega)$, for $1 \leq q \leq 6$, we obtain
    \begin{equation*}
        \begin{split}
            &\frac{d}{d\tau}(\|\lambda\|_{H^1(\Omega)}^2 + \|\eta\|_{L^2(\Omega)}^2)+C(\|\lambda\|_{H^2(\Omega)}^2 + \|\eta\|_{H^1(\Omega)}^2)\\
            &\qquad \leq C(\|\eta\|_{L^2(\Omega)}^2 + 
            \|\tilde u\|_{L^4(\Omega)}^2\|\lambda\|_{H^1(\Omega)}^2 + 
            \|\tilde f\|_{L^\infty(\Omega)}^2\|\lambda\|_{H^1(\Omega)}^2 \|\Delta \tilde v\|_{L^4(\Omega)}^2  \\
            &\qquad \quad+ C\|\tilde u - u_d\|_{L^2(\Omega)}^2 + C\|\tilde f\|_{L^\infty(\Omega)}^4\|\tilde u\|_{L^4(\Omega)}^2 \|\nabla \tilde v\|_{L^8(\Omega)}^4 \\
            &\qquad \quad + C\|\tilde f\|_{L^\infty(\Omega)}^2 \|\tilde u\|_{L^4(\Omega)}^2  \|\eta\|_{L^2(\Omega)}^2 + C\|\tilde f\|_{L^\infty(\Omega)}^4\|\tilde u\|_{L^6(\Omega)}^4 \|\nabla \tilde v\|_{L^6(\Omega)}^2 \\
            &\qquad\leq C(1+\|\tilde f\|_{L^\infty(\Omega)}^2 \|\tilde u\|_{L^4(\Omega)}^2)\|\eta\|_{L^2(\Omega)}^2 + C(\|\tilde u\|_{L^4(\Omega)}^2 + \| f\|_{L^\infty(\Omega)}^2 \|\Delta \tilde v\|_{L^4(\Omega)}^2 )\|\lambda\|_{H^1(\Omega)}^2 \\
            &\qquad \quad+ C\|\tilde u - u_d\|_{L^2(\Omega)}^2 + C\|\tilde f\|_{L^\infty(\Omega)}^4\|\tilde u\|_{L^4(\Omega)}^2 \|\nabla \tilde v\|_{L^8(\Omega)}^4 + C\|\tilde f\|_{L^\infty(\Omega)}^4\|\tilde u\|_{H^1(\Omega)}^4 \|\nabla \tilde v\|_{L^6(\Omega)}^2.
        \end{split}    
    \end{equation*}
    Recalling that $\nabla \tilde v \in L^q(\Omega_T;\mathbb R^2)$ for any $q\geq 1$, from Grönwall's inequality we can deduce that the pair $(\lambda,\eta)$ satisfies \eqref{eta-lam}.
\end{proof}

\subsection{Control characterization}
\label{subsec:control_characterization}

Let $(\widetilde u,\widetilde v,\widetilde f)
\in\mathcal S_{\rm ad}$
be a local optimal state-control triple and let $(\lambda,\eta)$ be the
associated adjoint pair. Corollary~\ref{cor:opt_system1} shows that the control
component of the first-order optimality condition is represented by
\begin{equation}\label{eq:reduced_gradient}
    G_f
    =
    \widetilde u\nabla\widetilde v\cdot\nabla\lambda
    +
    2\gamma_{\rm taxis}\widetilde f\,\widetilde u^2
    |\nabla\widetilde v|^2
    +
    4\gamma_f\widetilde f^3 .
\end{equation}
More precisely, $G_f\in L^{4/3}(\Omega_T)$ is paired with admissible control
variations in $L^4(\Omega_T)$.

Since $\mathcal F$ is the box-constrained control set defined in
\eqref{eq:def_F}, the optimal control satisfies
\begin{equation}
\label{eq:box_optimality_condition}
    \int_{\Omega_T}
    G_f\,(g-\widetilde f)\,dx\,dt
    \geq0,
    \qquad
    \forall g\in\mathcal F .
\end{equation}
Equivalently, for every $\tau>0$,
\begin{equation}
\label{eq:projected_gradient_condition}
    \widetilde f
    =
    \operatorname{Proj}_{[\xi_1,\xi_2]}
    \bigl(\widetilde f-\tau G_f\bigr)
    \qquad\text{a.e. in }\Omega_T,
\end{equation}
where the projection is understood pointwise. In the numerical experiments,
the bounds $\xi_1$ and $\xi_2$ are taken to be constants.

On the inactive set
\[
    \{(x,t)\in\Omega_T:
    \xi_1(x,t)<\widetilde f(x,t)<\xi_2(x,t)\},
\]
one has $G_f=0$, and hence
\begin{equation}
\label{eq:cubic_control}
    4\gamma_f\widetilde f^3
    +
    2\gamma_{\rm taxis}
    \widetilde u^2|\nabla\widetilde v|^2\widetilde f
    +
    \widetilde u\nabla\widetilde v\cdot\nabla\lambda
    =0.
\end{equation}
Assume that $\gamma_f>0$ and set
\[
    A=\widetilde u^2|\nabla\widetilde v|^2,
    \qquad
    B=\widetilde u\nabla\widetilde v\cdot\nabla\lambda.
\]
Then,
\[
    \widetilde f^3+p_c\widetilde f+q_c=0,
    \;\; \mbox{ where } \;
    p_c=\frac{\gamma_{\rm taxis}}{2\gamma_f}A,
    \qquad
    q_c=\frac{B}{4\gamma_f}.
\]
Since $A\geq0$, the cubic polynomial is monotone and has a unique real zero
given by
\begin{equation}
\label{eq:cardano_explicit}
    \begin{aligned}
        f_{\rm un}
        &=
        \operatorname{cbrt}\left(
        -\frac{B}{8\gamma_f}
        +
        \sqrt{
        \left(\frac{B}{8\gamma_f}\right)^2
        +
        \left(\frac{\gamma_{\rm taxis}}{6\gamma_f}A\right)^3
        }
        \right)
        \\
        &\quad+
        \operatorname{cbrt}\left(
        -\frac{B}{8\gamma_f}
        -
        \sqrt{
        \left(\frac{B}{8\gamma_f}\right)^2
        +
        \left(\frac{\gamma_{\rm taxis}}{6\gamma_f}A\right)^3
        }
        \right),
    \end{aligned}
\end{equation}
where
\[
    \operatorname{cbrt}(z)
    =
    \operatorname{sign}(z)|z|^{1/3}.
\]
The corresponding box-constrained pointwise candidate is
\begin{equation}
    \label{eq:update_control}
    f_{\rm car}
    =
    \operatorname{Proj}_{[\xi_1,\xi_2]}(f_{\rm un}).
\end{equation}
There is no additional global minus sign in
\eqref{eq:cardano_explicit}. In the numerical algorithm,
$f_{\rm car}$ is used as a candidate control and is accepted only after
projection and line search.

\section{Numerical implementation}
\label{sec:numerical_implementation}

This section explains how the continuous problem is implemented in practice and records the discretization choices used in the simulations of Section~\ref{sec:numerical_examples}.

The analytical state space remains $W_4\times X_4$, while the admissible
controls belong to $\mathcal F\subset L^4(\Omega_T)$. The finite-dimensional
approximation uses tensor-product B-spline collocation in space and either a
semi-implicit IMEX Euler method or an explicit Runge--Kutta method in time.

Let
\[
    t_k=k\Delta t,\qquad k=0,\ldots,N,\qquad T=N\Delta t.
\]
The discrete control $f^{j,k}$ is kept constant on each interval
$[t_k,t_{k+1})$, where $j$ denotes the optimization iteration.

\subsection{Tensor-product B-spline collocation}
\label{subsec:bspline_collocation}

Let $d\geq1$ be the B-spline degree and let
$\{B_m^x\}_{m=1}^{N_x}$ and
$\{B_\ell^y\}_{\ell=1}^{N_y}$ be the corresponding univariate bases.
For $w\in\{u,v,\lambda,\eta\}$, we use the tensor-product approximation
\begin{equation}
\label{eq:bspline_expansion}
    w_B(x,y,t)
    =
    \sum_{m=1}^{N_x}
    \sum_{\ell=1}^{N_y}
    a_{m,\ell}^{w}(t)
    B_m^x(x)B_\ell^y(y).
\end{equation}
The collocation nodes are the Greville points associated with the knot
vectors. The one-dimensional differentiation matrices induce the discrete
operators
\[
    \nabla_B,\qquad
    \operatorname{div}_B,\qquad
    \Delta_B,
\]
with the same sign convention for $\Delta_B$ as for the continuous Laplacian.

Homogeneous Neumann boundary conditions are incorporated through constrained
maps between nodal values and B-spline coefficients. The taxis term is
evaluated in divergence form:
\begin{equation}
\label{eq:bspline_taxis_flux}
    \operatorname{div}_B(fu\nabla_Bv).
\end{equation}

All spatial integrals are approximated by a tensor-product quadrature on the
Greville grid. For a nodal array $W=(W_{\ell,m})$, we set
\begin{equation}
\label{eq:bspline_quadrature}
    \mathcal Q_B(W)
    =
    \sum_{m=1}^{N_x}
    \sum_{\ell=1}^{N_y}
    \omega_m^x\omega_\ell^yW_{\ell,m}.
\end{equation}

After a forward time step, the state may be projected onto the Neumann-compatible B-spline space and onto the nonnegative cone at the collocation points. These projections are numerical safeguards and are not imposed on the adjoint variables.

In the conservative case, the implementation also allows the optional
rescaling
\begin{equation}
\label{eq:optional_mass_correction}
    u_B^{k+1}
    \leftarrow
    \frac{\mathcal Q_B(u_B^0)}
     {\mathcal Q_B(u_B^{k+1})}
    u_B^{k+1}.
\end{equation}
No mass rescaling is applied in the logistic case.

\subsection{Time discretization}
\label{subsec:time_discretization}

For a fixed control value $f$, define
\begin{align}
    \mathcal F_{u,B}(u,v,f)
    &=
    D_u\Delta_Bu
    -
    \operatorname{div}_B(fu\nabla_Bv)
    +
    ru-\mu u^2,
    \label{eq:Fu_bspline}
    \\
    \mathcal F_{v,B}(u,v)
    &=
    D_v\Delta_Bv-\alpha v+\beta u.
\label{eq:Fv_bspline}
\end{align}

For the IMEX Euler option, diffusion is treated implicitly and the taxis and
reaction terms explicitly:
\begin{equation}
\label{eq:forward_u_discrete}
    \left(I-\Delta t\,D_u\Delta_B\right)u^{j,k+1}
    =
    u^{j,k}
    +
    \Delta t
    \left[
    -\operatorname{div}_B
    \left(
    f^{j,k}u^{j,k}\nabla_Bv^{j,k}
    \right)
    +
    ru^{j,k}-\mu(u^{j,k})^2
    \right],
\end{equation}
and
\begin{equation}
\label{eq:forward_v_discrete}
    \left(I-\Delta t\,D_v\Delta_B\right)v^{j,k+1}
    =
    v^{j,k}
    +
    \Delta t
    \left[
    -\alpha v^{j,k}+\beta u^{j,k+1}
    \right].
\end{equation}

For the explicit option, write
\[
    Z=
    \begin{pmatrix}u\\ v\end{pmatrix},
    \qquad
    \mathcal F_B(Z,f)
    =
    \begin{pmatrix}
        \mathcal F_{u,B}(u,v,f)\\
        \mathcal F_{v,B}(u,v)
    \end{pmatrix}.
\]
Keeping $f^{j,k}$ fixed on $[t_k,t_{k+1})$, the classical RK4 update is
\begin{equation}
\label{eq:rk4_state_update}
    Z^{j,k+1}
    =
    Z^{j,k}
    +
    \frac{\Delta t}{6}
    \left(
    K_1+2K_2+2K_3+K_4
    \right),
\end{equation}
where
\[
    K_1=\mathcal F_B(Z^{j,k},f^{j,k}),
    \qquad
    K_2=\mathcal F_B
    \left(
    Z^{j,k}+\frac{\Delta t}{2}K_1,f^{j,k}
    \right),
\]
\[
    K_3=\mathcal F_B
    \left(
    Z^{j,k}+\frac{\Delta t}{2}K_2,f^{j,k}
    \right),
    \qquad
    K_4=\mathcal F_B
    \left(
    Z^{j,k}+\Delta tK_3,f^{j,k}
    \right).
\]
Thus, $u$ and $v$ are advanced simultaneously at every Runge--Kutta stage.
Internal substeps may be used for stability, while the control remains
constant over the full control interval.

By Proposition~\ref{prop:adjoint_regularity}, the adjoint system admits the additional regularity
\[
    \lambda
    \in
    L^\infty(0,T;H^1(\Omega))
    \cap
    L^2(0,T;H^2(\Omega)),
    \qquad
    \eta
    \in
    L^\infty(0,T;L^2(\Omega))
    \cap
    L^2(0,T;H^1(\Omega)).
\]
This justifies the strong-form B-spline discretization of the adjoint system.
Its terminal conditions are
\begin{equation}
\label{eq:terminal_adjoints_algorithm}
    \lambda^{j,N}
    =
    2\gamma_T(u^{j,N}-u_T),
    \qquad
    \eta^{j,N}=0.
\end{equation}
For the IMEX option, the adjoint diffusion terms are treated with the same implicit matrices as in the state equations. For the explicit option, the adjoint is integrated in reverse time with the same RK4 formula, using an intervalwise reconstruction of the stored forward state.

\subsection{Projected-gradient algorithm}
\label{sec:algorithm}

The characterization in
Subsection~\ref{subsec:control_characterization} provides the continuous
first-order condition and the Cardano candidate. We now describe their
discrete use.

For a given control $f^j\in\mathcal F$, the state system is solved forward,
the adjoint system is solved backward, and the reduced cost
\[
    \mathcal J(f^j)=J(u_{f^j},v_{f^j},f^j)
\]
is evaluated. The discrete control derivative is
\begin{equation}
\label{eq:discrete_reduced_gradient}
    G_B^{j,k}
    =
    u^{j,k}
    \nabla_Bv^{j,k}\cdot\nabla_B\lambda^{j,k}
    +
    2\gamma_{\rm taxis}
    f^{j,k}(u^{j,k})^2|\nabla_Bv^{j,k}|^2
    +
    4\gamma_f(f^{j,k})^3.
\end{equation}

The implementation tests the following candidate directions:
\begin{itemize}
    \item the normalized gradient direction
    \[
        d_{\rm grad}^j=-\sigma_jG_B^j,
        \qquad
        \sigma_j
        =
        \frac{\kappa\Delta_\xi}
        {\max\{\|G_B^j\|_\infty,10^{-14}\}},
    \]
    where $\Delta_\xi=\xi_2-\xi_1$ for constant bounds;
    
    \item the bang-bang direction
    \[
        f_{\rm bb}^{j,k}
        =
        \begin{cases}
            \xi_1, & G_B^{j,k}>0,\\
            \xi_2, & G_B^{j,k}<0,\\
            f^{j,k}, & G_B^{j,k}=0,
        \end{cases}
        \qquad
        d_{\rm bb}^j=f_{\rm bb}^j-f^j;
    \]
    
    \item when $\gamma_f>0$, the Cardano direction
    \[
        d_{\rm car}^j=f_{\rm car}^j-f^j,
    \]
    where $f_{\rm car}^j$ is obtained from
    \eqref{eq:cardano_explicit}--\eqref{eq:update_control}.
\end{itemize}

An additional damped projected-gradient candidate may be used in the logistic computation.

For each candidate direction $d^j$ and
\[
    \alpha_\ell=q_{\rm ls}^{\,\ell},
    \qquad
    \ell=0,\ldots,L_{\max}-1,
    \qquad
    0<q_{\rm ls}<1,
\]
the projected trial control is
\begin{equation}
\label{eq:line_search_control}
    f_{\ell,d}^{j+1}
    =
    \operatorname{Proj}_{[\xi_1,\xi_2]}
    \left(
    f^j+\alpha_\ell d^j
    \right).
\end{equation}
The state equations are solved again for every trial control. Among all tested
candidates, the algorithm accepts the one with the smallest reduced cost,
provided that
\begin{equation}
\label{eq:line_search_decrease}
    \mathcal J(f_{\ell,d}^{j+1})
    <
    \mathcal J(f^j)
    -
    \varepsilon_{\rm dec}
\   max\{1,|\mathcal J(f^j)|\}.
\end{equation}

The discrete projected-gradient mapping is
\begin{equation}
\label{eq:projected_gradient_norm}
    \mathcal G_{\tau_j}(f^j)
    =
    \frac{
    f^j-
    \operatorname{Proj}_{[\xi_1,\xi_2]}
    \left(
    f^j-\tau_jG_B^j
    \right)
    }{
    \tau_j
    },
    \qquad
    \tau_j
    =
    \frac{1}{
    \max\{\|G_B^j\|_\infty,10^{-14}\}
    }.
\end{equation}
For the numerical stopping criterion, we use the discrete $L^2$-type norm
\begin{equation}
\label{eq:discrete_L2_space_time_norm}
    \|W\|_{B,2}^2
    =
    \Delta t
    \sum_{k=0}^{N-1}
    \mathcal Q_B(|W^k|^2).
\end{equation}
This is a numerical stationarity norm; the analytical control space remains $L^4(\Omega_T)$.

After an accepted update, define
\[
    r_J^j
    =
    \frac{
    |\mathcal J(f^{j+1})-\mathcal J(f^j)|
    }{
    \max\{1,|\mathcal J(f^j)|\}
    },
    \qquad
    r_f^j
    =
    \frac{
    \|f^{j+1}-f^j\|_{B,2}
    }{
    \max\{1,\|f^j\|_{B,2}\}
    }.
\]
The algorithm stops when
\begin{equation}
\label{eq:actual_stopping_criterion}
    \left(
    r_J^j<\varepsilon_J
    \quad\text{and}\quad
    r_f^j<\varepsilon_f
    \right)
    \qquad\text{or}\qquad
    \|\mathcal G_{\tau_j}(f^j)\|_{B,2}
    <
    \varepsilon_{\rm PG}.
\end{equation}
A failed line search is recorded separately and is not interpreted as satisfaction of the first-order optimality condition.

\subsection{Diagnostics}
\label{sec:diagnostics}

The main terminal diagnostic is the relative final error
\begin{equation}
\label{eq:relative_final_error}
    E_T
    =
    \frac{
    \|u(T)-u_T\|_{L^2(\Omega)}
    }{
    \|u_T\|_{L^2(\Omega)}
    }.
\end{equation}
This quantity measures the accuracy with which the controlled trajectory reaches the prescribed terminal target.

We also monitor the terminal mass discrepancy
\begin{equation}
\label{eq:relative_mass_error}
    E_M
    =
    \frac{
    \left|
    \displaystyle\int_\Omega u(T)\,dx
    -
    \displaystyle\int_\Omega u_T\,dx
    \right|
    }{
    \left|
    \displaystyle\int_\Omega u_T\,dx
    \right|
}.
\end{equation}
In the numerical implementation, the denominators in \eqref{eq:relative_final_error} and \eqref{eq:relative_mass_error} are replaced
by their maximum with a small positive threshold in order to avoid division by
zero.

The lower and upper active-set rates of the control are defined by
\begin{equation}
\label{eq:saturation_rates}
    S_-
    =
    \frac{
    \#\{(m,\ell,k): f_{m,\ell}^k \simeq \xi_1\}
    }{
    N_xN_yN
    },
    \qquad
    S_+
    =
    \frac{
    \#\{(m,\ell,k): f_{m,\ell}^k \simeq \xi_2\}
    }{
    N_xN_yN
    }.
\end{equation}
Here \(f_{m,\ell}^k \simeq \xi_i\) means that the value of the control is equal
to the bound \(\xi_i\) up to the numerical tolerance used in the implementation.
These quantities indicate whether the optimal control is mostly interior or
whether it saturates one of the constraints.

We also record the taxis contribution
\begin{equation}
\label{eq:taxis_gradient_diagnostic}
    Q_{\rm taxis}
    =
    \int_0^T
    \int_\Omega
    |f u\nabla v|^2\,dx\,dt.
\end{equation}
In the discrete computation, this quantity is approximated using the B-spline
gradient \(\nabla_B\) and the quadrature \(\mathcal Q_B\) associated with the
Greville collocation points.

In the conservative case \(r=\mu=0\), the continuous model satisfies the mass
balance
\begin{equation}
\label{eq:mass_balance_conservative}
    \frac{d}{dt}
    \int_\Omega u(x,t)\,dx
    =
    0.
\end{equation}
We therefore monitor the discrete total mass
\begin{equation}
\label{eq:discrete_mass}
    M^k
    =
    \mathcal Q_B(u^k),
\end{equation}
together with the absolute and relative mass drifts
\begin{equation}
\label{eq:conservative_mass_diagnostics}
    M^k-M^0,
    \qquad
    \frac{|M^k-M^0|}{|M^0|}.
\end{equation}
The discrete derivative of \(M^k\) is also plotted in order to check the numerical counterpart of \eqref{eq:mass_balance_conservative}.

When the optional mass rescaling is activated, these diagnostics describe the
post-correction mass. The intrinsic mass drift of the uncorrected B-spline scheme should therefore be assessed either before the rescaling or in a separate computation without mass correction.

In the logistic case, the mass is not conserved. Instead, it satisfies
\begin{equation}
\label{eq:mass_balance_logistic}
    \frac{d}{dt}
    \int_\Omega u(x,t)\,dx
    =
    \int_\Omega
    \left(
    ru-\mu u^2
    \right)\,dx .
\end{equation}
Accordingly, we compute the discrete reaction contribution
\begin{equation}
\label{eq:discrete_logistic_reaction_integral}
    R^k
    =
    \mathcal Q_B
    \left(
    ru^k-\mu(u^k)^2
    \right).
\end{equation}
The numerical diagnostic compares the discrete derivative of the computed mass
with \(R^k\). For the Runge--Kutta implementation, the predicted mass increment
is evaluated with the same stage weights as the state update. For the IMEX
Euler implementation, the reaction contribution is evaluated at the explicit
time level.

Overall, the numerical diagnostics report the terminal error, the terminal mass
discrepancy, the active-set rates of the control, the taxis contribution, the
mass-balance behavior and the evolution of the optimization quantities such as
the reduced cost and the projected-gradient norm.

\section{Numerical examples}
\label{sec:numerical_examples}

We present two numerical experiments illustrating the action of the optimized
taxis sensitivity. The first one concerns the conservative regime
$r=\mu=0$, whereas the second one includes logistic growth and saturation,
$r>0$ and $\mu>0$. The state system, the cost functional and the numerical
optimization procedure have already been introduced and are not repeated here.

In both experiments, the spatial discretization uses tensor-product cubic
B-splines on a $35\times35$ Greville collocation grid. The desired trajectory
is the smooth interpolation
\begin{equation}
\label{eq:desired_path_numerical_examples}
    u_d(t)
    =
    \bigl(1-q(t/T)\bigr)u_0
    +
    q(t/T)u_T,
    \qquad
    q(s)=3s^2-2s^3.
\end{equation}
Thus, $u_d(0)=u_0$ and $u_d(T)=u_T$, while its time derivative vanishes at both endpoints.

The principal numerical parameters are summarized in
Table~\ref{tab:numerical_parameters}. To limit redundancy, the main text
retains the state evolution, the terminal comparison, the control structure,
the mechanism and optimization diagnostics, and the mass-balance verification.

\begin{table}[H]
    \centering
    \small
    \begin{tabular}{lcc}
        \hline
        Parameter & Conservative case & Logistic case\\
        \hline
        B-spline degree & $3$ & $3$\\
        Collocation grid & $35\times35$ & $35\times35$\\
        Final time $T$ & $7$ & $10$\\
        Number of time intervals & $300$ & $220$\\
        Time integrator & IMEX Euler & explicit RK4\\
        Control bounds & $[0,0.90]$ & $[0,1.10]$\\
        $(D_u,D_v)$ & $(1.8\times10^{-2},3.5\times10^{-2})$
                     & $(1.45\times10^{-2},4.0\times10^{-2})$\\
        $(\alpha,\beta)$ & $(1.2\times10^{-1},1.0\times10^{-1})$
                         & $(8.0\times10^{-2},1.45\times10^{-1})$\\
        $(r,\mu)$ & $(0,0)$ & $(8.0\times10^{-2},2.0\times10^{-1})$\\
        $\gamma_T$ & $8.0\times10^2$ & $4.0\times10^3$\\
        $\gamma_u$ & $1.0\times10^{-3}$ & $2.0\times10^{-4}$\\
        $\gamma_f$ & $8.0\times10^{-7}$ & $2.0\times10^{-8}$\\
        $\gamma_{\rm taxis}$ & $2.0\times10^{-5}$ & $6.0\times10^{-6}$\\
        \hline
    \end{tabular}
    \caption{Main parameters used in the conservative and logistic experiments.
    The choice of time discretization is practical rather than comparative:
    IMEX Euler is used in the conservative case, whereas RK4 is used in the
    logistic case with internal substeps to respect the explicit diffusion
    stability restriction.}
    \label{tab:numerical_parameters}
\end{table}

\subsection{Conservative experiment}
\label{subsec:example1_conservative}

In the conservative regime, the total mass of the population is invariant. The independently prescribed target profile is therefore normalized according to
\begin{equation}
\label{eq:target_rescaling_example1}
    u_T
    =
    \frac{\displaystyle\int_\Omega u_0\,dx}
         {\displaystyle\int_\Omega u_T^{\rm raw}\,dx}
    u_T^{\rm raw}.
\end{equation}
The optional mass correction described in
\eqref{eq:optional_mass_correction} is activated in this computation.

Figure~\ref{fig:ex1_state_evolution} shows the controlled, uncontrolled and
desired evolutions at selected times. The uncontrolled density becomes
progressively flatter because of diffusion, whereas the controlled solution
remains more localized and is guided toward the target region.

\begin{figure}[H]
    \centering
    \begin{subfigure}[t]{0.92\textwidth}
        \centering
        \includegraphics[width=\linewidth]
        {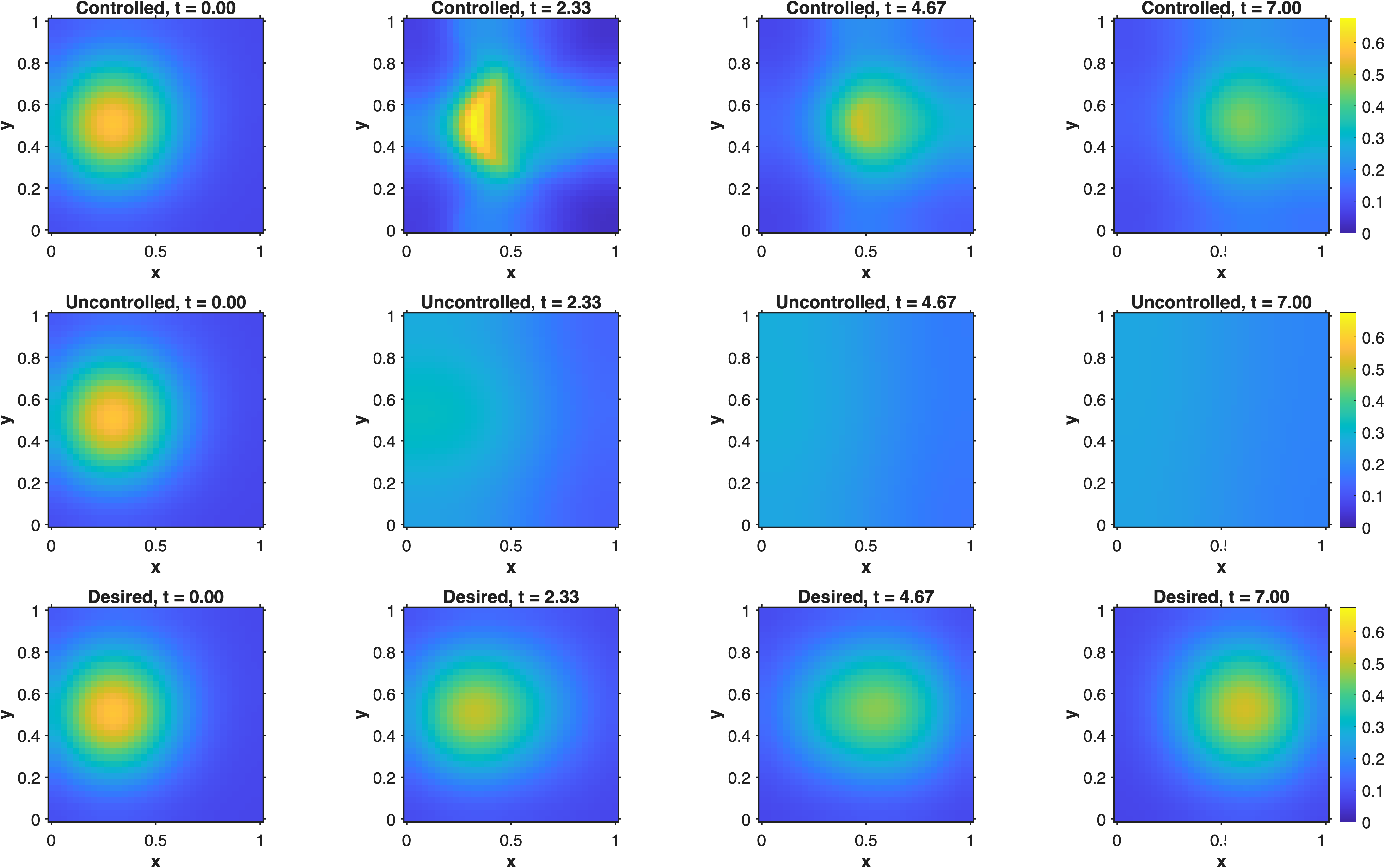}
        \caption{Controlled, uncontrolled and desired density profiles at selected
        times.}
        \label{fig:ex1_state_evolution_sub}
    \end{subfigure}
    \caption{Conservative experiment: state evolution.}
    \label{fig:ex1_state_evolution}
\end{figure}

Figure~\ref{fig:ex1_state_control} summarizes the principal effect of the control. Without control, diffusion produces an almost spatially uniform final density and the relative terminal error is
\[
    E_T^{\rm free}=5.10\times10^{-1}.
\]
The optimized taxis sensitivity preserves a localized density and moves its
maximum toward the target region, reducing the error to
\[
    E_T^{\rm ctrl}=1.66\times10^{-1}.
\]
This represents a reduction of approximately $67.5\%$.

The control is strongly shaped by the box constraints. Its support and intensity vary substantially in time, while the time-averaged map identifies the regions where chemotactic sensitivity is repeatedly activated.

\begin{figure}[H]
    \centering
    
    \begin{subfigure}[t]{0.82\textwidth}
        \centering
        \includegraphics[width=\linewidth]
        {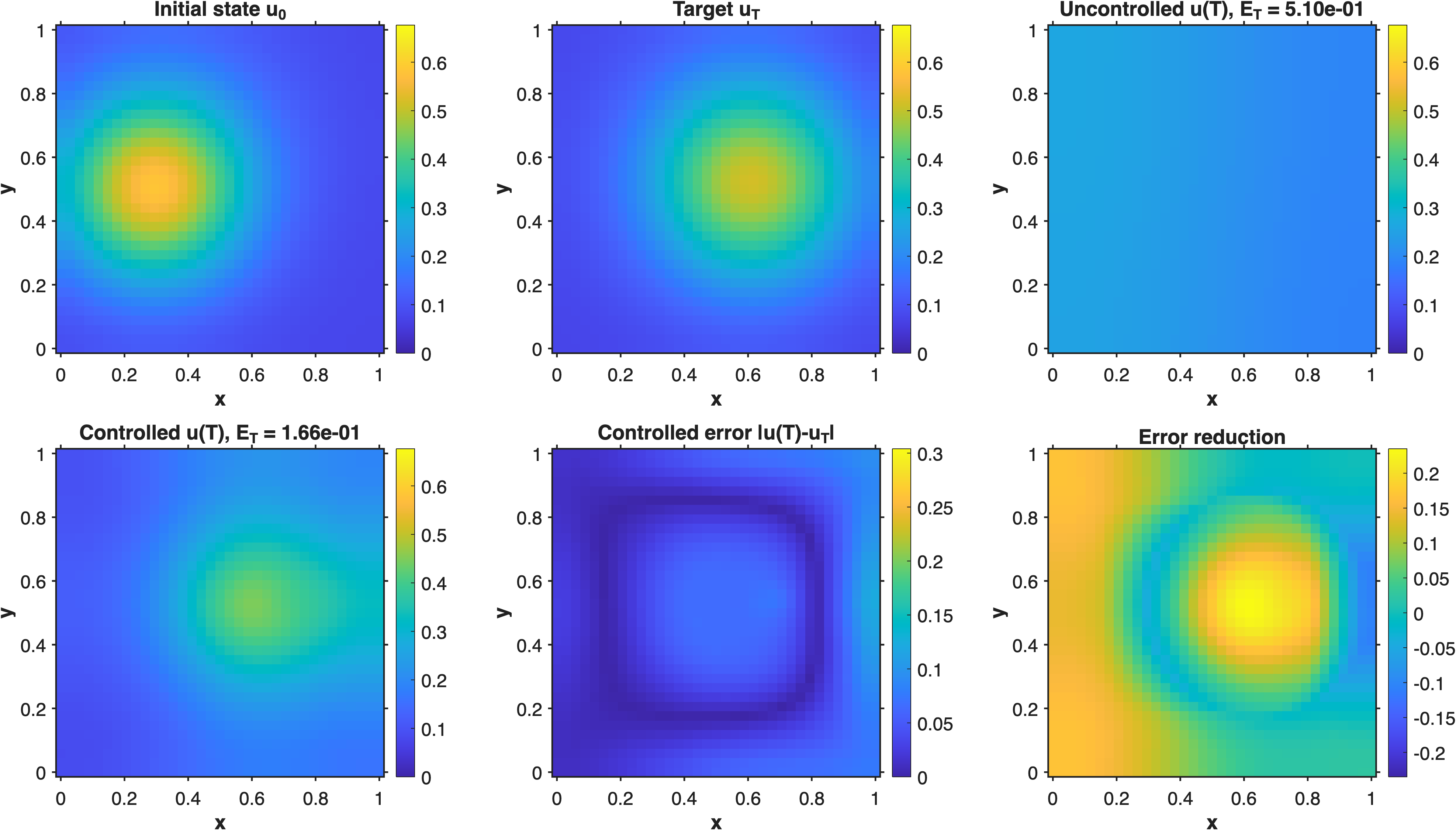}
        \caption{Initial state, target, free and controlled terminal states, terminal
        error and error reduction.}
        \label{fig:ex1_final_summary}
    \end{subfigure}
    
    \vspace{0.5em}
    
    \begin{subfigure}[t]{0.82\textwidth}
        \centering
        \includegraphics[width=\linewidth]
        {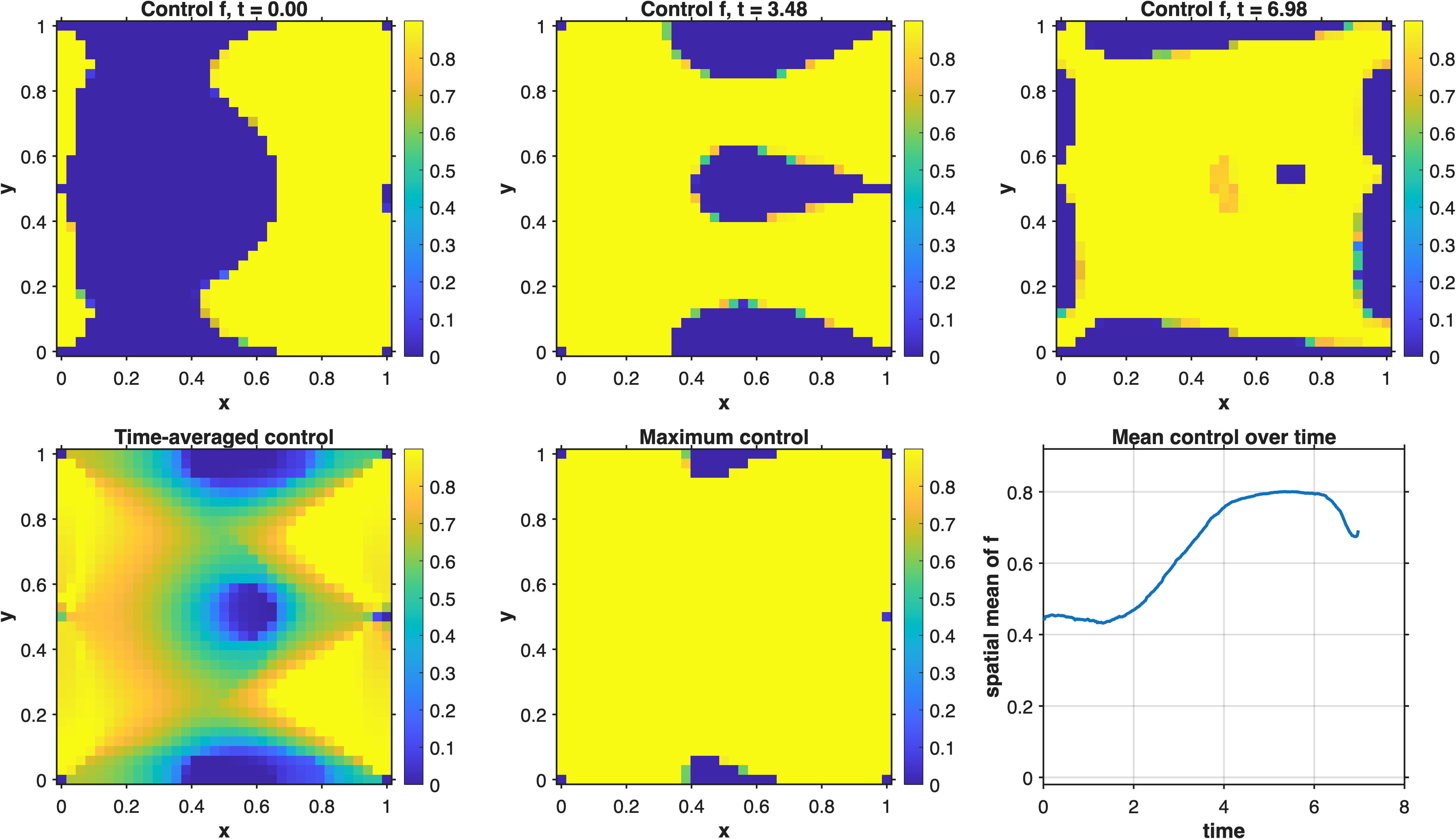}
        \caption{Control snapshots, time-averaged control, pointwise maximum and
        spatial mean over time.}
        \label{fig:ex1_control_structure}
    \end{subfigure}
    
    \caption{Conservative experiment: terminal action of the optimized taxis
    control and structure of the resulting admissible control.}
    \label{fig:ex1_state_control}
\end{figure}

The mechanism and numerical diagnostics are collected in Figure~\ref{fig:ex1_diagnostics}. The taxis contribution is concentrated where the chemical gradient and the controlled population overlap. The final midline profile confirms that the controlled state is substantially closer to the target than the free state.

The reduced cost and terminal error decrease throughout the optimization iterations. The projected-gradient residual, however, does not reach the
prescribed stationarity tolerance in the displayed run. The result should therefore be interpreted as a substantially improved admissible control rather than as a numerically certified stationary point.

The total mass is constant up to machine precision. Since the optional mass rescaling is active, this figure verifies the post-correction conservation implemented in the algorithm; it does not independently measure the intrinsic mass conservation error of the uncorrected collocation scheme.

\begin{figure}[H]
    \centering
    
    \begin{subfigure}[t]{0.90\textwidth}
        \centering
        \includegraphics[width=\linewidth]
        {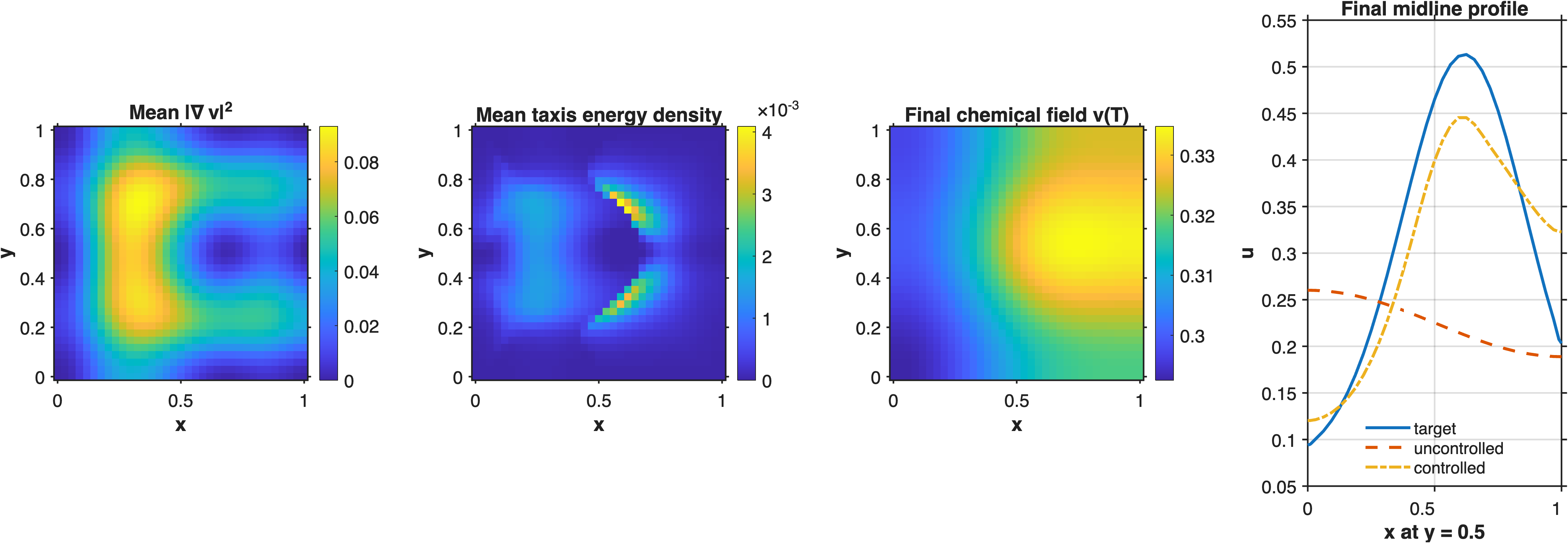}
        \caption{Mean chemical-gradient intensity, mean taxis energy density, final
        chemical field and final midline profile.}
        \label{fig:ex1_mechanism}
    \end{subfigure}
    
    \vspace{0.5em}
    
    \begin{subfigure}[t]{0.49\textwidth}
        \centering
        \includegraphics[width=\linewidth]
        {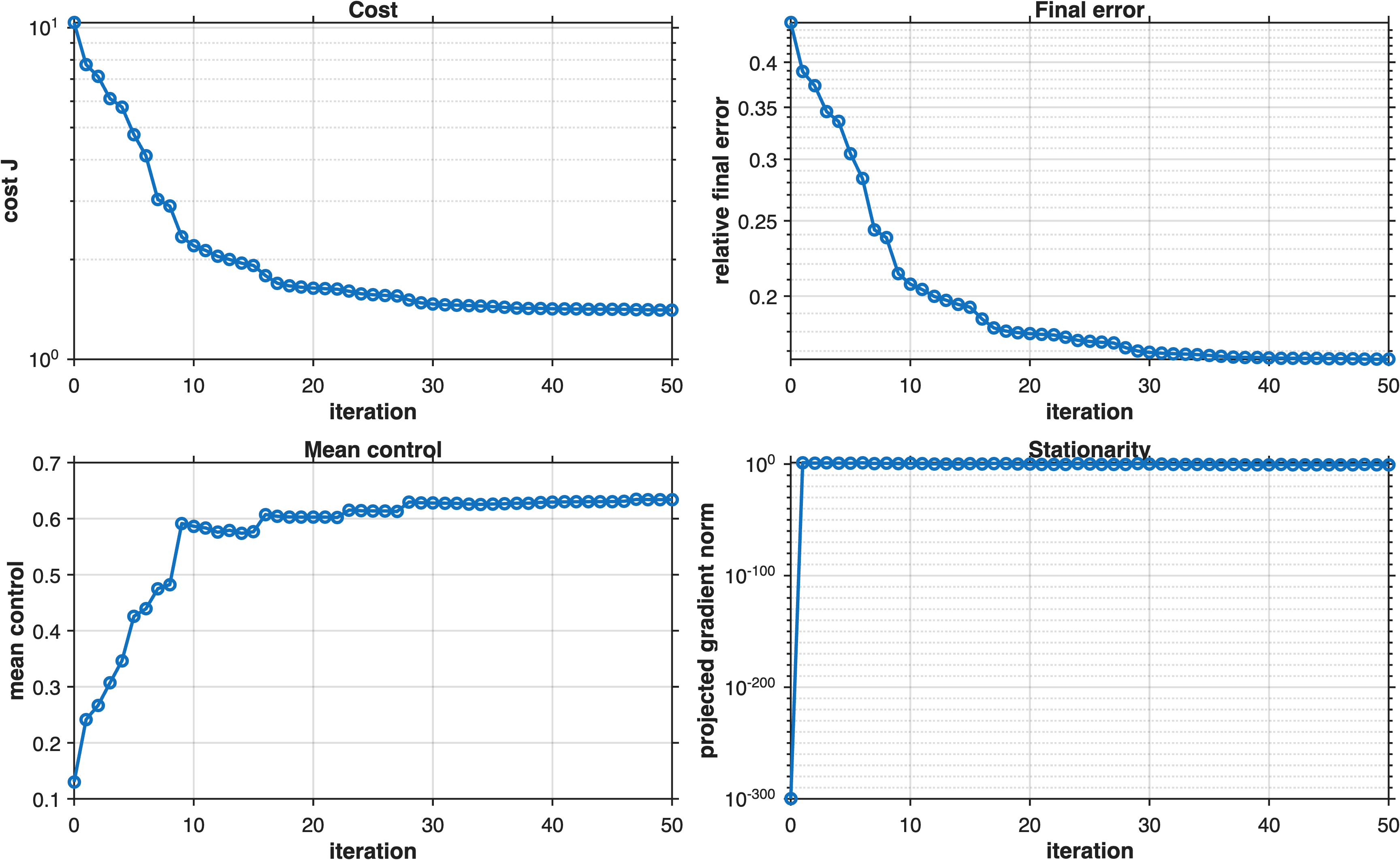}
        \caption{Reduced cost, final error, mean control and projected-gradient
        residual.}
        \label{fig:ex1_optimization}
    \end{subfigure}
    \hfill
    \begin{subfigure}[t]{0.49\textwidth}
        \centering
        \includegraphics[width=\linewidth]
        {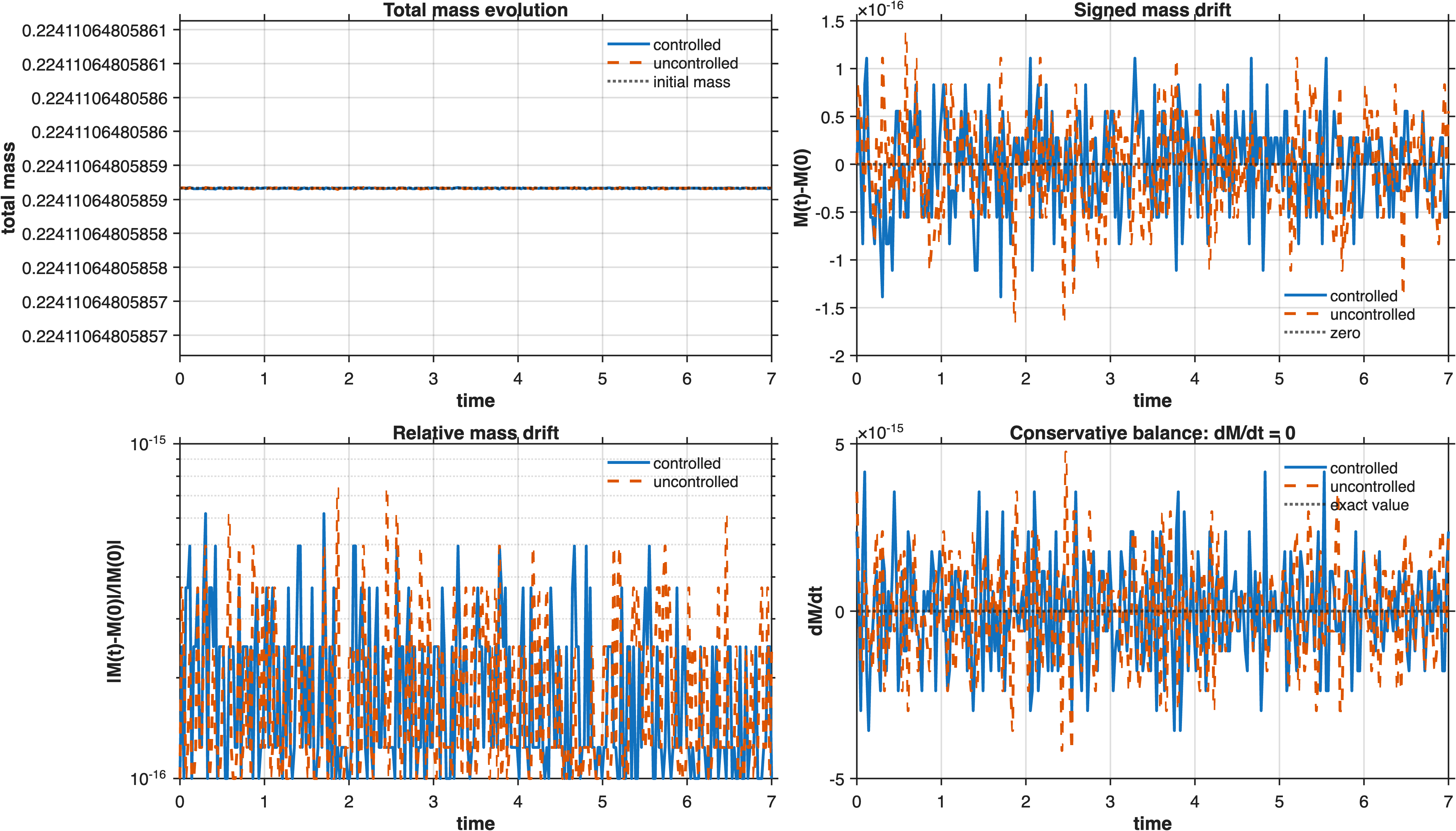}
        \caption{Total mass, absolute and relative mass drift, and discrete mass
        derivative.}
        \label{fig:ex1_mass}
    \end{subfigure}
    
    \caption{Conservative experiment: taxis mechanism, optimization history and
    mass-conservation diagnostics.}
    \label{fig:ex1_diagnostics}
\end{figure}

\FloatBarrier

\subsection{Logistic experiment}
\label{subsec:example2_logistic}

In the logistic regime, the total population mass evolves according to
\[
    \frac{d}{dt}\int_\Omega u(x,t)\,dx
    =
    \int_\Omega\bigl(ru-\mu u^2\bigr)\,dx.
\]
The target shape remains independently prescribed, but its mass is rescaled to the terminal mass of the uncontrolled logistic trajectory. This avoids an artificial mass mismatch without constructing the target from a reference control.

Figure~\ref{fig:ex2_state_evolution} shows the controlled, uncontrolled and desired evolutions in the logistic regime. Here the density is affected by the combined action of diffusion, taxis transport and logistic growth-saturation.
The optimized control creates a localized terminal distribution, whereas the uncontrolled solution becomes comparatively flatter.

\begin{figure}[H]
    \centering
    \begin{subfigure}[t]{0.92\textwidth}
        \centering
        \includegraphics[width=\linewidth]
        {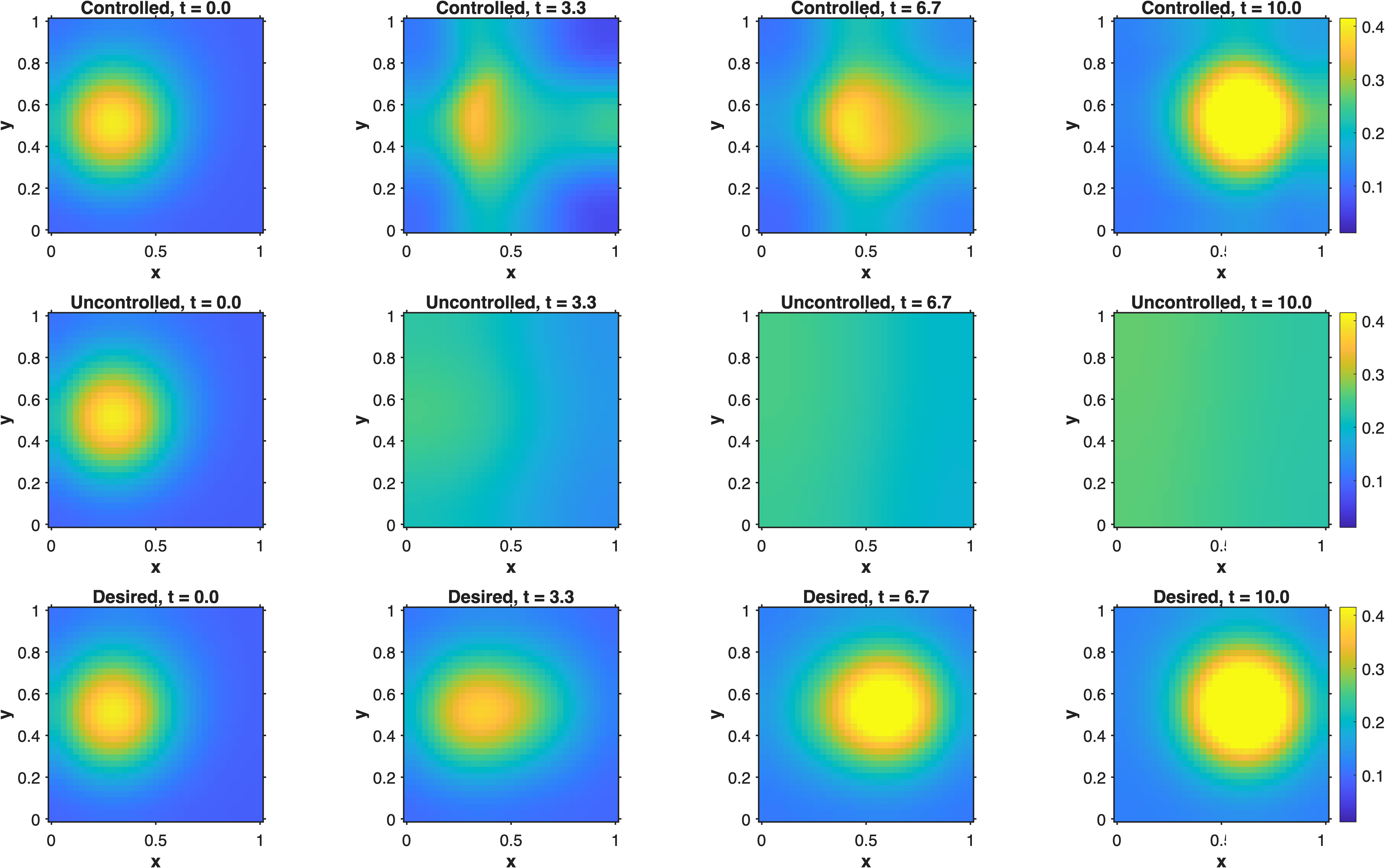}
        \caption{Controlled, uncontrolled and desired density profiles at selected
        times.}
        \label{fig:ex2_state_evolution_sub}
    \end{subfigure}
    \caption{Logistic experiment: state evolution.}
    \label{fig:ex2_state_evolution}
\end{figure}

Figure~\ref{fig:ex2_state_control} shows a stronger terminal improvement than in the conservative experiment. The uncontrolled solution becomes nearly spatially homogeneous and has relative terminal error
\[
    E_T^{\rm free}=4.51\times10^{-1}.
\]
The optimized control produces a localized final density close to the target and reduces the error to
\[
    E_T^{\rm ctrl}=6.10\times10^{-2},
\]
corresponding to a reduction of approximately $86.5\%$.

As in the conservative case, the computed control has a pronounced active-set structure. Its spatial mean first decreases, then increases during the main steering phase, and finally relaxes near the terminal time.

\begin{figure}[H]
    \centering
    
    \begin{subfigure}[t]{0.82\textwidth}
        \centering
        \includegraphics[width=\linewidth]
        {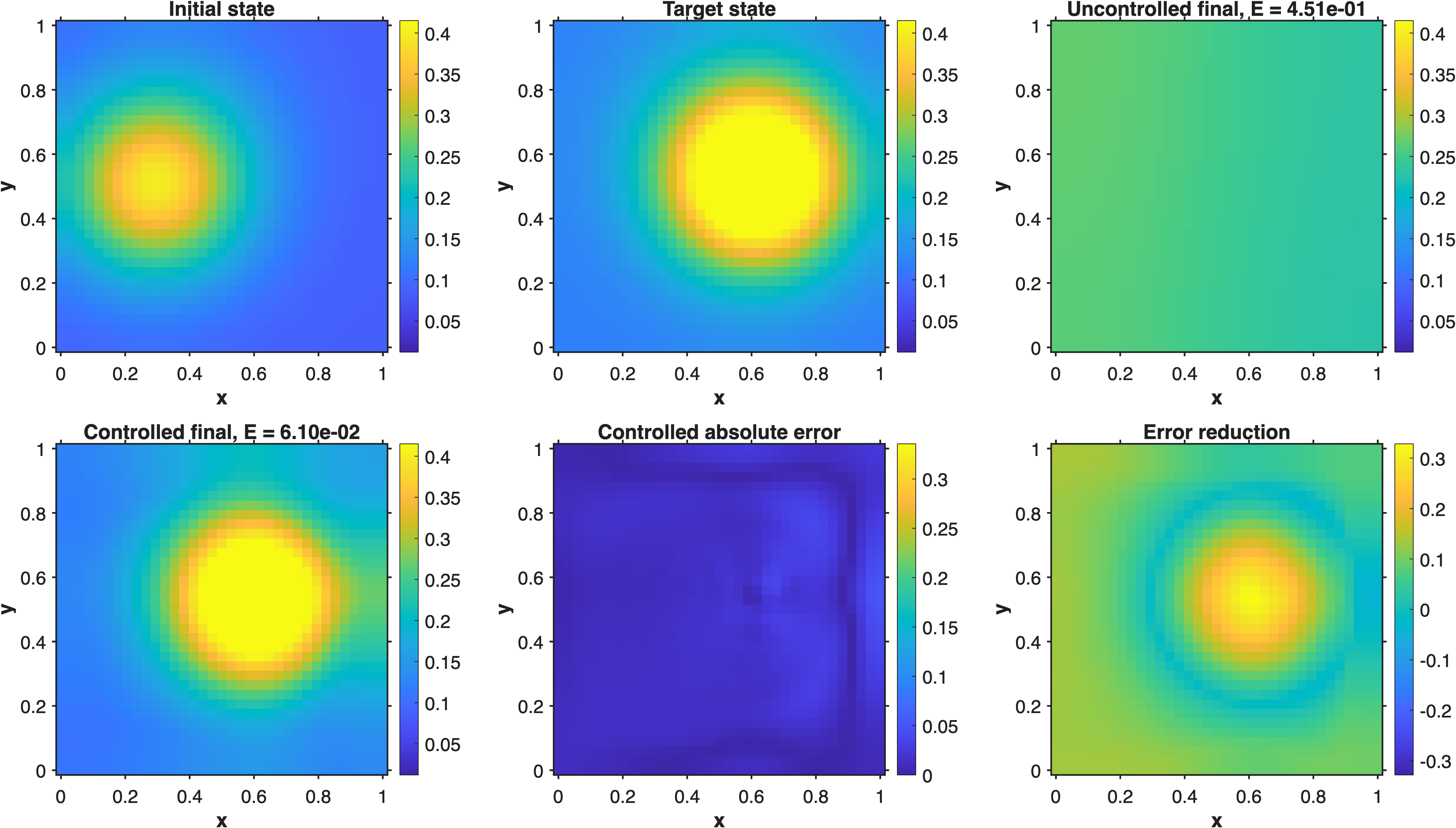}
        \caption{Initial state, target, free and controlled terminal states, terminal
        error and error reduction.}
        \label{fig:ex2_final_summary}
    \end{subfigure}
    
    \vspace{0.5em}
    
    \begin{subfigure}[t]{0.82\textwidth}
        \centering
        \includegraphics[width=\linewidth]
        {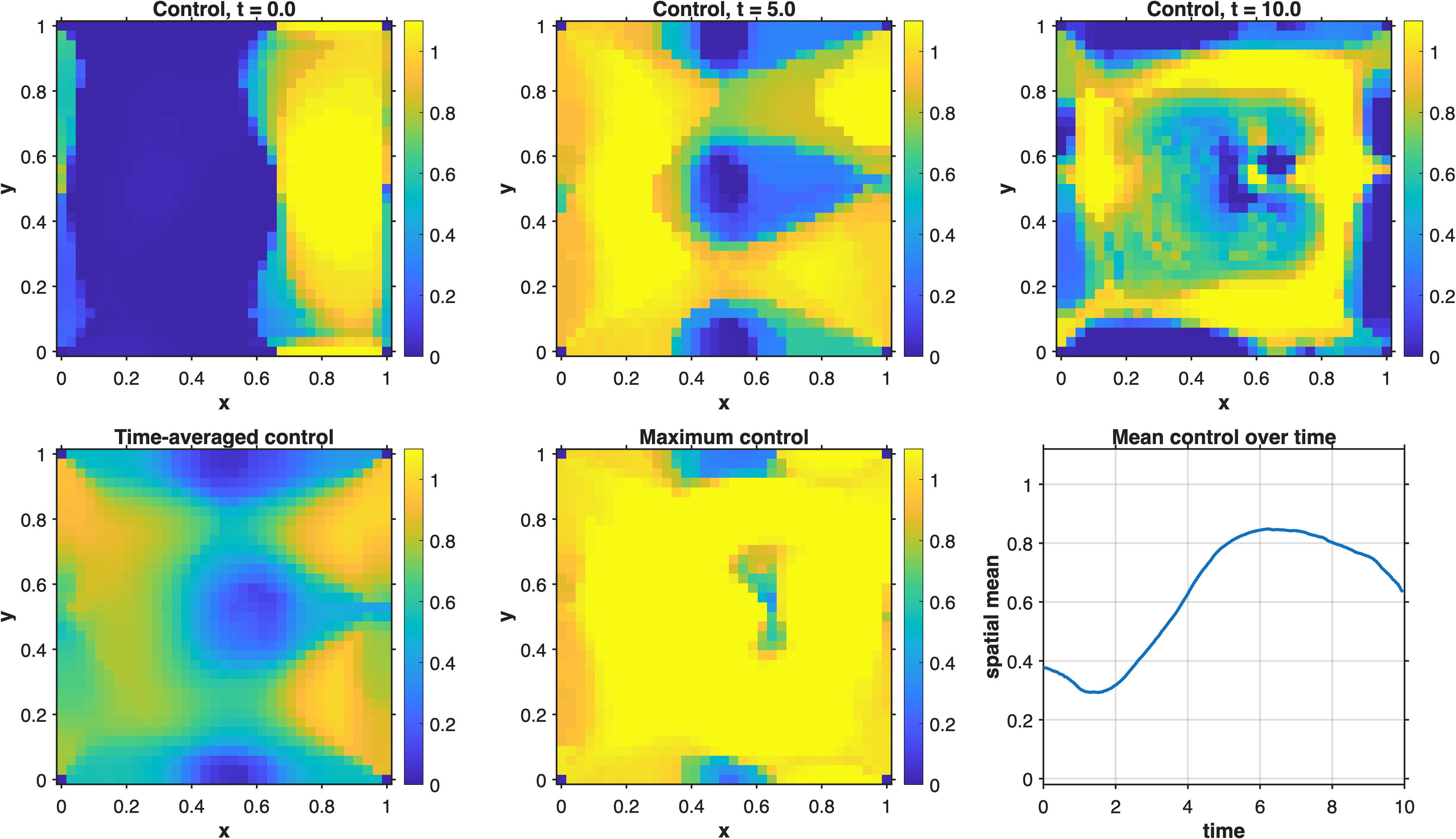}
        \caption{Control snapshots, time-averaged control, pointwise maximum and
        spatial mean over time.}
        \label{fig:ex2_control_structure}
    \end{subfigure}
    
    \caption{Logistic experiment: terminal action of the optimized taxis control
    and structure of the resulting admissible control.}
    \label{fig:ex2_state_control}
\end{figure}

Figure~\ref{fig:ex2_diagnostics} combines the optimization, taxis and mass-balance information. The mean taxis density is concentrated near the right-central region where the controlled density is being formed. The final midline profile is very close to the target, consistently with the small terminal error.

The reduced cost and terminal error decrease over the optimization iterations.
As in the conservative computation, the displayed projected-gradient residual does not provide a strict stationarity certificate; the numerical result is therefore reported as a locally improved admissible control.

The controlled and uncontrolled mass curves differ because the control changes the spatial distribution of $u$ and therefore modifies \(\int_\Omega u^2\,dx\). In particular, matching the target mass with the uncontrolled terminal mass does not force the controlled terminal mass to be
the same. The computed and reaction-predicted mass curves remain close, and the comparison between the numerical mass derivative and the reaction integral confirms the expected logistic balance. The larger residual in the controlled computation reflects the stronger spatial dynamics and remains small relative to the overall mass scale.

\begin{figure}[H]
    \centering
    
    \begin{subfigure}[t]{0.82\textwidth}
        \centering
        \includegraphics[width=\linewidth]
        {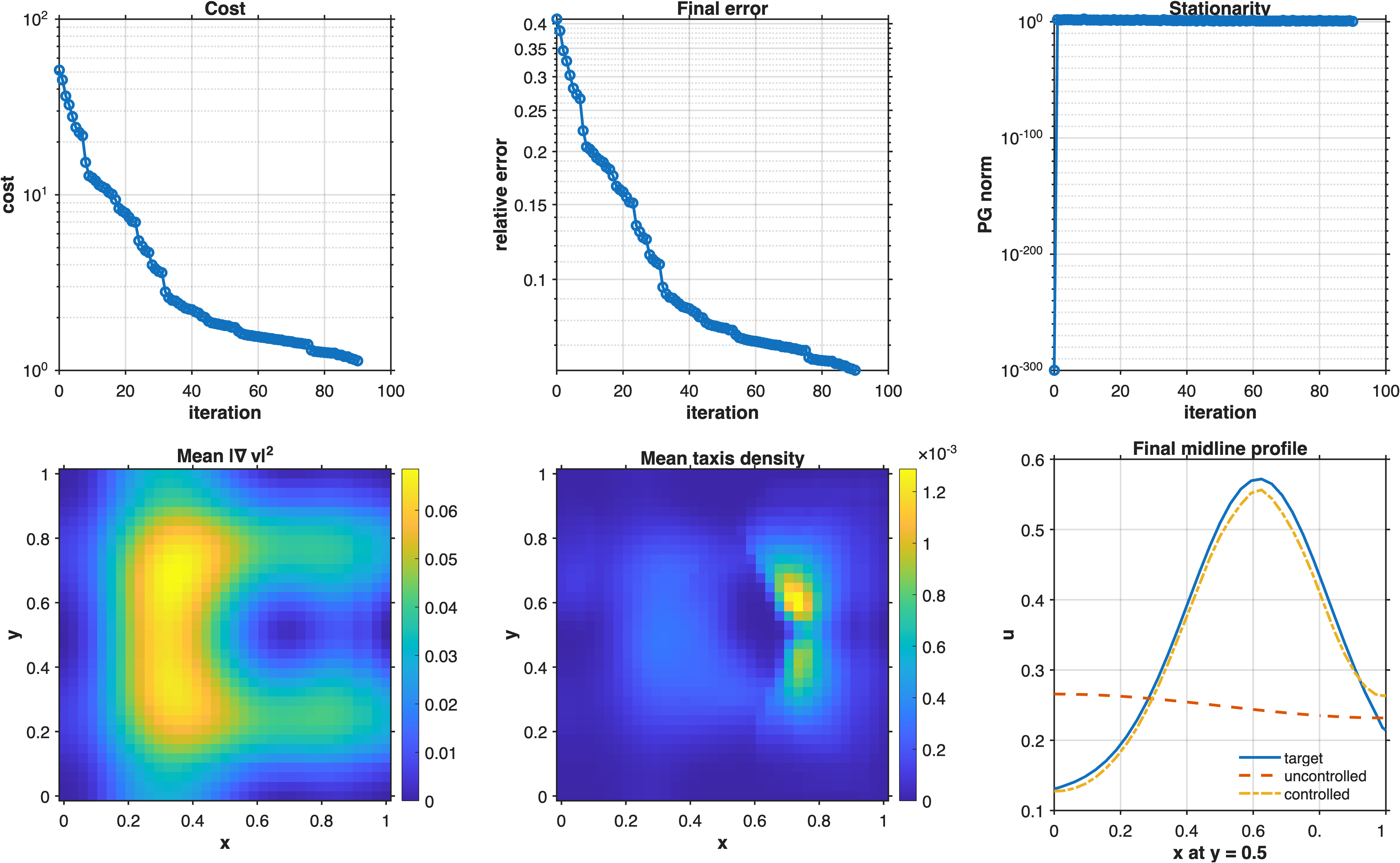}
        \caption{Cost and terminal-error histories, projected-gradient residual, mean
        chemical-gradient intensity, mean taxis density and final midline profile.}
        \label{fig:ex2_optimization_mechanism}
    \end{subfigure}
    
    \vspace{0.5em}
    
    \begin{subfigure}[t]{0.82\textwidth}
        \centering
        \includegraphics[width=\linewidth]
        {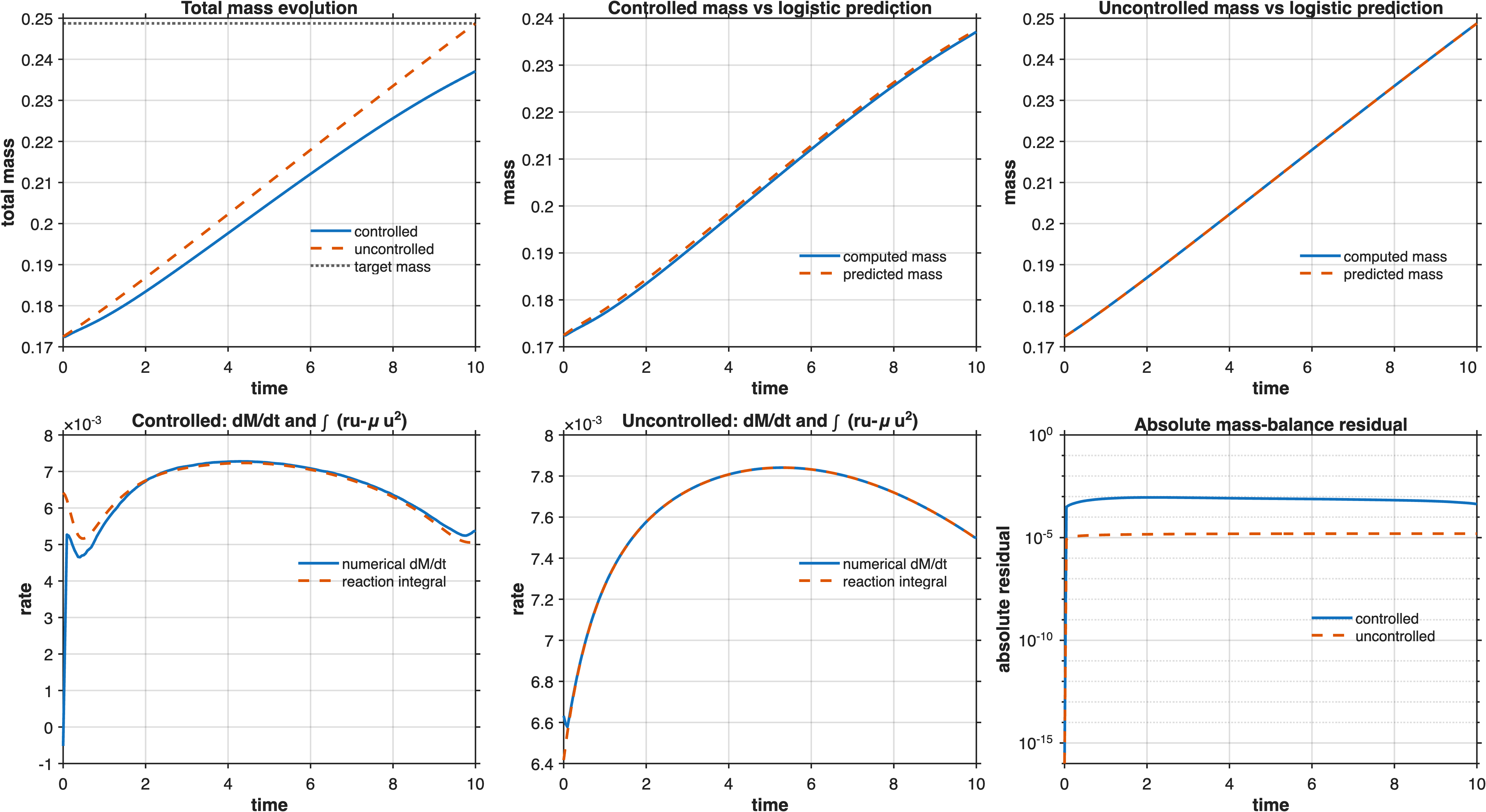}
        \caption{Computed and predicted mass curves, comparison of the mass derivative
        with the reaction integral, and absolute mass-balance residual.}
        \label{fig:ex2_mass_balance}
    \end{subfigure}
    
    \caption{Logistic experiment: optimization, taxis mechanism and numerical
    verification of the reaction-driven mass balance.}
    \label{fig:ex2_diagnostics}
\end{figure}

The two experiments show that the optimized taxis sensitivity can substantially improve terminal tracking in both the conservative and logistic regimes. The control does not act as an external source: it modifies the
transport induced by the chemical gradient. Consequently, the attainable terminal profile depends on the chemical field, the box constraints and, in the logistic case, the reaction-driven mass evolution. The numerical results support the effectiveness of the computed controls without asserting global optimality.

\FloatBarrier

\section{Acknowledgments}

The authors are grateful to Enrique Zuazua for his interest in this work, helpful discussions, and encouragement throughout the development of the paper.

Y.S. and L.Y. acknowledge that this work was carried out during a research stay at the Chair for Dynamics, Control, Machine Learning and Numerics, Friedrich-Alexander-Universität Erlangen–Nürnberg. Y.S. was supported by the Alexander von Humboldt Foundation through an Alexander von Humboldt Fellowship and L.Y. was supported by CAPES-Brazil and partially by the Alexander von Humboldt Foundation as  visiting scientist.

\appendix

\section{ Continuity of the map $\mathcal R$ given in \eqref{R}}\label{appendix_a}

Let $(\bar u_{n})$ be a sequence in \(S_{M}^{(4)}\) (defined in \eqref{eq:def_S_M4}) and let $ \bar u\in S_{M}^{(4)} $ be such that $\bar u_{n}\to\bar u$ in $L^4(\Omega_T)$, as $n\to\infty$.  Moreover, we define $(u_n,v_n)$ such that \(\mathcal R(\bar u_{n})=u_n\) and $v_n$ the corresponding solution of the second system. The goal is to show that \(u_n=\mathcal R(\bar u_{n})\to \mathcal R(\bar u)=u\) in \(L^4(\Omega_T)\), as $n\to\infty$.  For this, let us define
\[
    a_n:=a_\varepsilon(\bar u_{n})\;\mbox{ and } \quad a:=a_\varepsilon(\bar u).
\]
Since $a_\varepsilon$ is bounded and Lipschitz continuous, we have that $a_n\to a$ in $L^4(\Omega_T)$ and weak-$*$ in $L^\infty(\Omega_T)$, as $n\to\infty$.
We notice that $(u_n,v_n)$ and $(u,v)$ are the solutions associated to $a_n$ and $a$, respectively. Let
\[
    w_n:=e^{-\gamma t}(u_n-u)\; \mbox{ and } \; z_n:=v_n-v.
\]
Then, $w_n(\cdot,0)=0$, $z_n(\cdot,0)=0$ in $\Omega$, we have the Neumann boundary conditions on $\Gamma_T$, and  on $\Omega_T$ we have that
\begin{align}
    &\partial_t w_n - D_u\Delta w_n + e^{-\gamma t} \nabla\!\cdot\!\big(f\,(a_n\nabla v_n - a\nabla v)\big)
    +\gamma w_n= r\,w_n - \mu \, e^{-\gamma t} (a_n u_n - a u), \label{diff-w}\\
    &\partial_t z_n - D_v\Delta z_n + \alpha z_n = \beta\,(a_n-a). \label{diff-z}
\end{align}

We further split
\[
    a_n\nabla v_n - a\nabla v = (a_n-a)\,\nabla v_n + a\,\nabla z_n,
    \quad
    a_n u_n - a u = (a_n-a)\,u_n + a\, e^{\gamma t} w_n.
\]

Testing \eqref{diff-w} with $w_n$ and integrating by parts in the drift term, we get the following:
\begin{align*}
    &\frac12\frac{d}{dt}\|w_n\|_{L^2(\Omega)}^2 + D_u\|\nabla w_n\|_{L^2(\Omega)}^2+\gamma \|w_n\|_{L^2(\Omega)}^2 \\
    =& \int_\Omega e^{-\gamma t} f\,(a_n\nabla v_n - a\nabla v)\cdot\nabla w_n\;dx
    + r\|w_n\|_{L^2(\Omega)}^2 
    - \mu\!\int_\Omega e^{-\gamma t} (a_n u_n - a u)\,w_n\;dx.
\end{align*}
We introduce the explicit pieces
\[
    I_1:=\int_\Omega |f|\,|a_n-a|\,|\nabla v_n|\,|\nabla w_n|\;dx,\quad
    I_2:=\int_\Omega |f|\,|a|\,|\nabla z_n|\,|\nabla w_n|\;dx,
\]
\[
    J_1:=\int_\Omega |a_n-a|\,|u_n|\,|w_n|\;dx,\quad
    J_2:=\int_\Omega a\,|w_n|^2\;dx.
\]
Then,
\[
    \frac12\frac{d}{dt}\|w_n\|_{L^2(\Omega)}^2 + D_u\|\nabla w_n\|_{L^2(\Omega)}^2 +\gamma \|w_n\|_{L^2(\Omega)}^2
    \le I_1 + I_2 + r\|w_n\|_{L^2(\Omega)}^2 + \mu J_1 + \mu J_2.
\]

Since $(v_n)_n$ is uniformly bounded in $X_4$, we have
$\sup_n\|\nabla v_n\|_{L^\infty(0,T;L^4(\Omega))}\leq C.$
Hence, using H\"older's and Young's inequalities, we obtain
\begin{align*}
    I_1
    &\leq
    \|f\|_{L^\infty(\Omega_T)}
    \|a_n-a\|_{L^4(\Omega)}
    \|\nabla v_n\|_{L^4(\Omega)}
    \|\nabla w_n\|_{L^2(\Omega)}
    \\
    &\leq
    \frac{D_u}{6}\|\nabla w_n\|_{L^2(\Omega)}^2
    +
    C\|a_n-a\|_{L^4(\Omega)}^2.
\end{align*}
Since $a\in L^\infty(\Omega_T)$, it follows that
\begin{align*}
    I_2 \le &\|f\|_{L^\infty(\Omega)}\|a\|_{L^\infty(\Omega}\,\|\nabla z_n\|_{L^2(\Omega)}\,\|\nabla w_n\|_{L^2(\Omega)}\\
    \le &\frac{D_u}{6}\|\nabla w_n\|_{L^2(\Omega)}^2 + C\,\|\nabla z_n\|_{L^2(\Omega)}^2.
\end{align*}
Moreover,
\[
    \mu J_2 = \mu\int_\Omega a\,|w_n|^2\;dx \le \mu\|a\|_{L^\infty(\Omega)}\,\|w_n\|_{L^2(\Omega)}^2.
\]
From the uniform estimate obtained in Section~\ref{sec:existence_auxiliary}, we have $\sup_n \|u_n\|_{L^\infty(0,T;L^4(\Omega))}\leq C.$
Hence,
\begin{align*}
    J_1 \le &\|a_n-a\|_{L^4(\Omega)}\,\|u_n\|_{L^4(\Omega)}\,\|w_n\|_{L^2(\Omega)}\\
    \le &C\,\|a_n-a\|_{L^4(\Omega)}\,\|w_n\|_{L^2(\Omega)}\\
    \le& \tfrac12\|w_n\|_{L^2(\Omega)}^2 + C\,\|a_n-a\|_{L^4(\Omega)}^2.
\end{align*}

Gathering all the bounds, for a.e.\ $t\in(0,T)$, we obtain that
\begin{align}\label{w-ineq}
    &\frac{d}{dt}\|w_n\|_{L^2(\Omega)}^2 + \frac{2D_u}{3}\|\nabla w_n\|_{L^2(\Omega)}^2+\gamma \|w_n\|_{L^2(\Omega)}^2 \notag\\
    \le& C\,\|w_n\|_{L^2(\Omega)}^2 + C\,\|a_n-a\|_{L^2(\Omega)}^2 + C\,\|a_n-a\|_{L^4(\Omega)}^2 + C\,\|\nabla z_n\|_{L^2(\Omega)}^2.
\end{align}
Since $|\Omega|<\infty$, we have that $\|a_n-a\|_{L^2(\Omega)}\le |\Omega|^{\frac12-\frac14}\,\|a_n-a\|_{L^4(\Omega)}$. We may keep only the $L^4$-term.

Testing \eqref{diff-z} with $z_n$, we get
\begin{align*}
    &\frac12\frac{d}{dt}\|z_n\|_{L^2(\Omega)}^2 + D_v\|\nabla z_n\|_{L^2(\Omega)}^2 + \alpha\|z_n\|_{L^2(\Omega)}^2\\
    = &\beta\int_\Omega (a_n-a )z_n\;dx
    \le \frac{\alpha}{2}\|z_n\|_{L^2(\Omega)}^2 + \frac{\beta^2}{2\alpha}\|a_n-a\|_{L^2(\Omega)}^2.
\end{align*}
Thus,
\begin{align}\label{zn-grad}
    &\frac{d}{dt}\|z_n\|_{L^2(\Omega)}^2 + 2D_v\|\nabla z_n\|_{L^2(\Omega)}^2 + \alpha\|z_n\|_{L^2(\Omega)}^2
    \le \frac{\beta^2}{\alpha}\|a_n-a\|_{L^2(\Omega)}^2,\notag\\
    &\Longrightarrow\quad
    \int_0^t\|\nabla z_n\|_{L^2(\Omega)}^2\;dt \le C\int_0^t\|a_n-a\|_{L^4(\Omega)}^2\;dt.
\end{align}
Integrating \eqref{w-ineq} over $(0,t)$ and using \eqref{zn-grad} as well as $w_n(0)=0$, we get
\begin{align*}
    &\|w_n(t)\|_{L^2(\Omega)}^2 + \frac{2D_u}{3}\int_0^t\|\nabla w_n\|_{L^2(\Omega)}^2\;dt+(\gamma-C)\int_0^t\|w_n\|_{L^2(\Omega)}^2\;dt \\
    \le& C\int_0^t\|a_n-a\|_{L^2(\Omega)}^2 \;dt+ C\int_0^t\|a_n-a\|_{L^4(\Omega)}^2\;dt.
\end{align*}
Choosing $\gamma=C+1,$ we get
\begin{equation}\label{w-strong}
    \sup_{t\in[0,T]}\|w_n(t)\|_{L^2(\Omega)}^2 + \int_0^T\|\nabla w_n\|_{L^2(\Omega)}^2\;dt
    +\int_0^T \|w_n\|_{L^2(\Omega)}^2 \;dt\le C\,\|a_n-a\|_{L^4(\Omega_T)}^2 \xrightarrow[n\to\infty]{} 0.
\end{equation}
From \eqref{w-strong}, we have that $w_n\to0$ in $L^2(0,T;H^1(\Omega))$ and in $L^\infty(0,T;L^2(\Omega))$, as $n\to\infty$.
By the 2D Sobolev embedding, for each $t$, $\|w_n(t)\|_{L^4(\Omega)}\le C\|w_n(t)\|_{H^1(\Omega)}$. Hence,
\[
    \int_0^T \|w_n(t)\|_{L^4(\Omega)}^{2}\,dt \le C \int_0^T \|w_n(t)\|_{H^1(\Omega)}^{2}\,dt \xrightarrow[n\to\infty]{}0.
\]
Moreover, by the uniform bound of $u_n$ and the fact that $u\in L^\infty(0,T;L^4(\Omega))$, we obtain that
\begin{align*}
    \|w_n\|_{L^4(\Omega_T)}^{4}=\int_0^T \|w_n(t)\|_{L^4(\Omega)}^{4}\;dt
    \le &\Big(\sup_{t\in[0,T]}\|w_n(t)\|_{L^4(\Omega)}^{\,2}\Big)\,\int_0^T \|w_n(t)\|_{L^4(\Omega)}^{2}\;dt\\
    \le &C\,\int_0^T \|w_n(t)\|_{L^4(\Omega)}^{2}\;dt\xrightarrow[n\to\infty]{}0.
\end{align*}
Therefore, $w_n\to0$ in $L^4(\Omega_T)$, as $n\to\infty$. That is, as $n\to\infty$,
\[
    \mathcal R(\bar u_{n}) = u_n \longrightarrow u = \mathcal R(\bar u)\quad\text{in }L^4(\Omega_T),
\]
and we have shown that $\mathcal R$ is continuous.

\section{Fréchet differentiability of the functional $J$}
\label{appendix_b_J}

\begin{lemma}\label{lem:der_J}
    The functional $J:\mathbb{X}\to \R$ is Fréchet differentiable and its Fréchet derivative at $s=(u, v, f) \in \mathbb{X}$ in the direction $z=(U,V,F) \in \mathbb{X}$ is given by
    \begin{align*}
        J'(s)[z] = &2 \gamma_u \int_{\Omega_T} ( u - u_d) U \ dx dt + 2 \gamma_{\text{taxis}} \int_{\Omega_T} (f u \nabla v) \cdot (fu\nabla V + fU \nabla v + F u \nabla v) \ dx dt \notag\\
        &+ 4 \gamma_f \int_{\Omega_T} f^3 F \ dx dt + 2 \gamma_T \int_\Omega (u(x,T)-u_T(x))U(x,T) \ dx.
    \end{align*}
\end{lemma}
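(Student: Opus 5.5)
We prove Lemma \ref{lem:der_J} term by term, writing
\[
J=\gamma_u J_1+\gamma_{\rm taxis}J_2+\gamma_f J_3+\gamma_T J_4 ,
\]
where $J_1(u)=\|u-u_d\|_{L^2(\Omega_T)}^2$, $J_2(u,v,f)=\|fu\nabla v\|_{L^2(\Omega_T)}^2$, $J_3(f)=\|f\|_{L^4(\Omega_T)}^4$ and $J_4(u)=\|u(T)-u_T\|_{L^2(\Omega)}^2$. Each piece is shown to be Fréchet differentiable on $\mathbb X=W_4\times\widetilde X_4\times L^4(\Omega_T)$. The claimed derivative is then the sum, and linearity of Fréchet differentiation gives the statement. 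In every case we write $J_i(s+z)-J_i(s)=J_i'(s)[z]+R_i(s,z)$ and show $|R_i|\le C\|z\|_{\mathbb X}^2$ (or $o(\|z\|_{\mathbb X})$). We also check that the candidate $J_i'(s)$ is a bounded linear functional on $\mathbb X$.

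\textbf{The pieces $J_1$, $J_3$, $J_4$.}
\begin{itemize}
\item $J_1$ is a continuous quadratic form composed with the continuous embedding $W_4\hookrightarrow L^2(\Omega_T)$. The remainder is exactly $\|U\|_{L^2(\Omega_T)}^2\le C\|U\|_{W_4}^2$.
\item $J_4$ first needs a trace estimate. Since $u\in L^2(0,T;H^1(\Omega))$ and $\partial_tu\in L^2(0,T;(H^1(\Omega))')$, the Lions--Magenes lemma gives $W_4\hookrightarrow C([0,T];L^2(\Omega))$. Hence $U\mapsto U(T)$ is bounded from $W_4$ into $L^2(\Omega)$, and $J_4$ is again a continuous quadratic form with remainder $\|U(T)\|_{L^2(\Omega)}^2$.
\item For $J_3$, expand $(f+F)^4-f^4=4f^3F+6f^2F^2+4fF^3+F^4$. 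Hölder's inequality bounds $\int_{\Omega_T}4f^3F$ by $4\|f\|_{L^4}^3\|F\|_{L^4}$, so it is a bounded linear functional. The remaining terms are $O(\|F\|_{L^4}^2)$ on bounded sets.
\end{itemize}

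\textbf{The trilinear piece $J_2$.} This is the main step. Set $\Phi(u,v,f)=fu\nabla v$. We show that $\Phi:\mathbb X\to L^2(\Omega_T;\mathbb R^2)$ is a bounded trilinear map. Then $J_2=\|\Phi\|^2$ is a composition of a bounded trilinear map with a continuous quadratic form, hence a polynomial and Fréchet differentiable. Its derivative is $2(\Phi(s),\,fu\nabla V+fU\nabla v+Fu\nabla v)_{L^2(\Omega_T)}$, and the remainder collects products containing at least two increments. The needed bound is
\[
\|fu\nabla v\|_{L^2(\Omega_T)}\le C\|f\|_{L^4(\Omega_T)}\|u\|_{W_4}\|v\|_{X_4}.
\]
It follows from Hölder's inequality with exponents $4,8,8$ (so that $\tfrac14+\tfrac18+\tfrac18=\tfrac12$), together with two embeddings:
\begin{itemize}
\item $\nabla v\in L^\infty(0,T;W^{1/2,4}(\Omega))\hookrightarrow L^\infty(0,T;L^q(\Omega))$ for every finite $q$, by the fractional critical embedding recalled in Section \ref{Func-Spa}, so $\nabla v\in L^8(\Omega_T)$;
\item $u\in L^8(\Omega_T)$, by interpolating $L^\infty(0,T;L^4(\Omega))$ with $L^2(0,T;H^1(\Omega))\hookrightarrow L^2(0,T;L^q(\Omega))$ for large $q$. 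Concretely, Gagliardo--Nirenberg gives $\|u\|_{L^8}^8\le C\|u\|_{L^4}^{4}\|u\|_{H^1}^{4}$ pointwise in time, but this needs $u\in L^4(0,T;H^1(\Omega))$.
\end{itemize}

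\textbf{Expected obstacle.} The main difficulty is the integrability of $u$ in the $J_2$ estimate. If the $L^8(\Omega_T)$ bound on $u$ fails with only $L^2(0,T;H^1(\Omega))$, I would instead place $f$ in $L^\infty(\Omega_T)$, which is legitimate for $f\in\mathcal F$. I would then use $u\in L^4(\Omega_T)$ together with $\nabla v\in L^\infty(0,T;L^q(\Omega))$ and Hölder in space with $L^{4+}$--$L^{q}$ pairs. Differentiability in the $F$ direction would be taken with respect to the $L^4$ norm only at points where this pairing closes; otherwise the statement is weakened to Gâteaux differentiability along admissible directions, which is all that Theorem \ref{teo:exist_lagr} really uses.
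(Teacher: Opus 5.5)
The step you flagged is a real gap, and the fallback you propose does not close it.

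An element of $W_4$ only lies in $L^\infty(0,T;L^4(\Omega))\cap L^2(0,T;H^1(\Omega))$. The 2D interpolation $\|u\|_{L^p(\Omega)}^p\le C\|u\|_{L^4(\Omega)}^4\|u\|_{H^1(\Omega)}^{p-4}$ then gives only $u\in L^6(\Omega_T)$, and $L^8(\Omega_T)$ genuinely fails. For example, $u(x,t)=t^{-1/4}\phi\bigl((x-x_0)/t^{1/2}\bigr)$, with $x_0\in\Omega$ and $\phi\in C_c^\infty(\mathbb R^2)$ of small support, lies in $W_4$, but $\|u(t)\|_{L^8(\Omega)}^8\sim t^{-1}$. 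So your space--time H\"older $(4,8,8)$ argument does not prove the trilinear bound.

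The fallback changes the statement. The control component of $\mathbb X$ carries only the $L^4(\Omega_T)$ norm. An estimate that needs $\|f\|_{L^\infty(\Omega_T)}$, or a retreat to G\^ateaux derivatives along admissible directions, therefore does not give Fr\'echet differentiability of $J$ on $\mathbb X$. Moreover, the multiplier theorem of Tr\"oltzsch invoked in the proof of Theorem~\ref{teo:exist_lagr} is stated for Fr\'echet-differentiable $J$, so G\^ateaux differentiability is not ``all that is used''.

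The missing idea is to apply H\"older in mixed space--time norms instead of isotropic ones. Since $v\in L^4(0,T;W^{2,4}(\Omega))$ and $W^{1,4}(\Omega)\hookrightarrow L^\infty(\Omega)$ in two dimensions, you have $\nabla v\in L^4(0,T;L^\infty(\Omega))$. Then
\[
\|fu\nabla v\|_{L^2(\Omega_T)}^2\le\int_0^T\|f\|_{L^4(\Omega)}^2\|u\|_{L^4(\Omega)}^2\|\nabla v\|_{L^\infty(\Omega)}^2\,dt
\le\|u\|_{L^\infty(0,T;L^4(\Omega))}^2\,\|f\|_{L^4(\Omega_T)}^2\,\|\nabla v\|_{L^4(0,T;L^\infty(\Omega))}^2 ,
\]
which is exactly $\|fu\nabla v\|_{L^2(\Omega_T)}\le C\|f\|_{L^4(\Omega_T)}\|u\|_{W_4}\|v\|_{X_4}$.

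Alternatively, raise your own Gagliardo--Nirenberg inequality to the fourth rather than the eighth power. This gives $\|u\|_{L^8(\Omega)}^4\le C\|u\|_{L^4(\Omega)}^2\|u\|_{H^1(\Omega)}^2$, so $u\in L^4(0,T;L^8(\Omega))$. That pairs with $f\in L^4(0,T;L^4(\Omega))$ and $\nabla v\in L^\infty(0,T;L^8(\Omega))$.

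With this bound $\Phi$ is bounded trilinear and $J_2$ is a continuous degree-six polynomial on $\mathbb X$. The rest of your argument then goes through unchanged.

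Repaired this way, your route differs from the paper's and is more robust. The paper:
\begin{itemize}
\item computes G\^ateaux derivatives by difference quotients;
\item cites Nemytskii-operator theory for the norm terms, without treating the trace $U\mapsto U(T)$ separately;
\item obtains Fr\'echet differentiability of $J_2$ from continuity of $s\mapsto J_2'(s)$, through an estimate that uses $\|f_n+f\|_{L^\infty(\Omega_T)}$, i.e.\ the same $L^\infty$ device as your fallback.
\end{itemize}
Your decomposition instead gives explicit $O(\|z\|_{\mathbb X}^2)$ remainders, handles the terminal term through the Lions--Magenes embedding $W_4\hookrightarrow C([0,T];L^2(\Omega))$, and needs only $f\in L^4(\Omega_T)$.
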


\begin{proof}
    We derive each term of the functional $J$. Letting
    $\displaystyle J_1(u,v,f) := \int_0^T \| u(t) - u_d(t) \|^2_{L^2(\Omega)} \ dt$, a simple calculation gives, for every $\lambda\in (0,1]$,
    \begin{align*}
        &J_1(u+\lambda U, v+\lambda V, f+\lambda F) - J_1(u,v,f) \\
        &= \int_{\Omega_T} \left( 2 \lambda (u(x,t)-u_d(x,t)) U(x,t) + \lambda^2 U(x,t)^2 \right) dxdt. 
    \end{align*}
    This implies that
    $$
        \lim_{\lambda \to 0} \frac{J_1(u+\lambda U, v+\lambda V, f+\lambda F) - J_1(u,v,f)}{\lambda} = \int_{\Omega_T} 2 (u(x,t)-u_d(x,t)) U(x,t) \ dxdt.
    $$
    Next, letting
    $\displaystyle J_2(u,v,f) = \int_{\Omega_T} |f u \nabla v|^2 \ dxdt$,
    we have, for every $\lambda\in (0,1]$, 
    \begin{align*}
        &J_2(u+\lambda U, v+\lambda V, f+\lambda F) \\
        &= \int_{\Omega_T} \left(f u \nabla v + \lambda (f u \nabla V + f U \nabla v + Fu\nabla v)+\lambda^2 \mathcal{C}_1(u,v,f,U,V,F,\lambda) \right)^2 \ dxdt \\
        &= \int_{\Omega_T} \left( |fu\nabla v|^2 + 2\lambda ((fu\nabla v) \cdot (f u \nabla V + f U \nabla v + Fu\nabla v)) + \lambda^2 \mathcal{C}_2(u,v,f,U,V,F,\lambda) \right) dxdt, 
    \end{align*}
    where $\mathcal{C}_i$, $i=1,2$, are expressions depending on $(u,v,f,U,V,F,\lambda)$ and bounded when $\lambda \to 0$. Thus,
    $$
        \lim_{\lambda \to 0} \frac{J_2(u+\lambda U, v+\lambda V, f+\lambda F) - J_2(u,v,f)}{\lambda} = 2 \int_{\Omega_T} (fu\nabla v) \cdot (f u \nabla V + f U \nabla v + Fu\nabla v) \ dxdt.
    $$
     The other terms of $J'(s)[z]$ are computed analogously. This shows that $J$ is Gateaux differentiable at $s$. 
    
    To show that $J$ is Fréchet differentiable, first we observe that the functional $J$, with the exception of the term $J_2$, is a sum of terms given by $L^p$-norms of the variables. 
    Following the argument of \cite{Guillen-Vianna-23} and using the theory of Nemytskii operators \cite[Subsection 4.3.3.]{Troltzsch_book} in $L^p$-spaces, it is known that those terms are Fréchet differentiable.
    
    Now, we analyze the term $J_2'$ given by
    $$
        J_2'(s)[z]= 2 \int_{\Omega_T} \left( |f|^2 |u|^2 \nabla v \cdot \nabla V + |f|^2 u U |\nabla v|^2 + f F |u|^2 |\nabla v|^2 \right) dxdt =: J_{2,1}' + J_{2,2}' + J_{2,3}'.
    $$ 
    
    Let $(u_n,v_n,f_n)$ be a sequence in $\mathbb{X}$ converging to $(u,v,f) \in \mathbb{X}$, as $n\to\infty$. For $J_{2,1}'$ we have that
    \begin{equation*}
        \begin{split}
            &\Big|J_{2,1}'(u_n,v_n,f_n)[U,V,F]-J_{2,1}'(u,v,f)[U,V,F]\Big| \\
            &\quad \leq 2 \int_{\Omega_T} \left( |f_n - f| |f_n + f| |u|^2 |\nabla v| |\nabla V| + |f|^2 |u_n - u| |u_n + u| |\nabla v| |\nabla V| \right. \\
            &\quad\quad\quad\quad\quad \left. + |f|^2 |u|^2 |\nabla v_n - \nabla v| |\nabla V| \right) \ dxdt \\
            &\quad \leq \left( 2 \| f_n + f\|_{L^\infty(\Omega_T)} \|u\|_{L^4(\Omega_T)}^2 \|\nabla v\|_{L^4(\Omega_T)} \|\nabla V\|_{L^4(\Omega_T)} \right. \\
            &\left. \quad\quad + 2\|f\|_{L^\infty(\Omega_T)}^2 \|u_n + u\|_{L^4(\Omega_T)} \|\nabla v\|_{L^4(\Omega_T)} \|\nabla V \|_{L^4(\Omega_T)} \right. \\
            &\left. \quad\quad + 2 \|f\|_{L^\infty(\Omega_T)}^2 \|u\|_{L^4(\Omega_T)}^2  \|\nabla V \|_{L^4(\Omega_T)} \right) \|(u_n-u,v_n-v,f_n-f)\|_{\mathbb{X}},
        \end{split}
    \end{equation*}
    which converges to zero, as $n \to \infty$. The estimates for $J_{2,2}'$ and $J_{2,3}'$ follow similar. Thus, the map $s \mapsto J_2'(s)$ is continuous from $\mathbb{X} \to \mathcal{L}(\mathbb{X,}\R)$. Therefore, $J_2$, hence $J$, is Fréchet differentiable at $s$.
\end{proof}

\section{Fréchet differentiability of the map $G$}
\label{appendix_c_G}

\begin{lemma}\label{lem:der_G}
    The operator $G:\mathbb{X}\to \mathbb{Y}$ is Fréchet differentiable and its Fréchet derivative at  $s=(u,v,f) \in \mathbb{X}$ in the direction $z=(U,V,F) \in \mathbb{X}$ is given, for every $\varphi \in L^2(0,T;H^{1}(\Omega))$, by
    \begin{align*}
        \langle G_1'(s)[z],\varphi \rangle =& \langle \partial_t U,\varphi \rangle + D_u (\nabla U,\nabla \varphi)_{L^2(\Omega_T)} - (fu\nabla V + f U \nabla v + F u \nabla v, \nabla \varphi)_{L^2(\Omega_T)} \\
        &\qquad\qquad\qquad+ (-\lambda r U + 2 \mu u U,\varphi)_{L^2(\Omega_T)},  \\
        G_2'(s)[z] =& \partial_t V - D_v \Delta V + \alpha V - \beta U, \qquad \text{in } L^4(\Omega_T).
    \end{align*}
\end{lemma}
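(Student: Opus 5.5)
We will prove Lemma \ref{lem:der_G} by splitting $G$ into a bounded linear part and a nonlinear part. The linear part is
\[
(u,v,f)\mapsto \big(\partial_t u - D_u\Delta u - r u,\; \partial_t v - D_v\Delta v+\alpha v-\beta u\big),
\]
understood weakly in the first component. The nonlinear part consists of the two maps $N_1(s):=-\nabla\cdot(fu\nabla v)$ and $N_2(u):=\mu u^2$, both viewed as elements of $L^2(0,T;(H^1(\Omega))')$.

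\textbf{Linear part.} We first check that the linear part is bounded from $\mathbb X$ into $\mathbb Y$; it is then its own Fréchet derivative.
\begin{itemize}
\item For $u\in W_4$, the maps $\partial_t u$ and $D_u(\nabla u,\nabla\cdot)$ send $u$ boundedly into $L^2(0,T;(H^1)')$, and $ru$ does as well.
\item For $v\in X_4$, the quantities $\partial_t v$, $\Delta v$, $v$ all lie in $L^4(\Omega_T)$ with norms controlled by $\|v\|_{X_4}$.
\item The term $u$ belongs to $L^4(\Omega_T)$ because $W_4\hookrightarrow L^\infty(0,T;L^4(\Omega))$.
\end{itemize}
This produces the terms $\partial_t U$, $D_u\nabla U$, $-rU$ and the whole of $G_2'$. (The displayed formula contains a stray factor $\lambda$ in front of $rU$, which should read $-rU$.)

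\textbf{The quadratic term.} For $N_2$ we expand $(u+U)^2-u^2-2uU=U^2$. Since $H^1(\Omega)\hookrightarrow L^2(\Omega)$, we can estimate
\[
\|U^2\|_{L^2(0,T;(H^1)')}\le C\|U^2\|_{L^2(\Omega_T)}=C\|U\|_{L^4(\Omega_T)}^2\le C\|U\|_{W_4}^2,
\]
and the remainder is $o(\|U\|_{W_4})$. The same estimate with one factor replaced by $u$ shows that $U\mapsto 2\mu uU$ is bounded.

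\textbf{The trilinear term: the main obstacle.} The term $N_1$ is trilinear, $N_1(s)=B(f,u,v)$, with $\langle B(f,u,v),\varphi\rangle=-(fu\nabla v,\nabla\varphi)_{L^2(\Omega_T)}$. Any continuous trilinear map is Fréchet differentiable with derivative $B(F,u,v)+B(f,U,v)+B(f,u,V)$ and remainder of cubic and quadratic order in $\|z\|_{\mathbb X}$. So everything reduces to the single estimate
\[
\|fu\nabla v\|_{L^2(\Omega_T)}\le C\|f\|_{L^4(\Omega_T)}\|u\|_{W_4}\|v\|_{X_4}.
\]
This is the delicate point, because $f$ is only in $L^4(\Omega_T)$ and not in $L^\infty$, as it is in the earlier state estimates.

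We will prove it with H\"older's inequality using exponents $\frac14+\frac18+\frac18=\frac12$. Since $W^{1/2,4}(\Omega)\hookrightarrow L^q(\Omega)$ for all $q<\infty$ in 2D, we have $\nabla v\in C([0,T];L^q(\Omega))$ and hence $\|\nabla v\|_{L^\infty(0,T;L^8)}\le C\|v\|_{X_4}$. For $u$, Ladyzhenskaya's inequality and interpolation between $L^\infty(0,T;L^4)$ and $L^2(0,T;H^1)\hookrightarrow L^2(0,T;L^q)$ give $u\in L^8(\Omega_T)$ with $\|u\|_{L^8(\Omega_T)}\le C\|u\|_{W_4}$. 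Concretely, we interpolate $L^8$ between $L^4$ and $L^{q}$ with $q$ large, so that the time exponent is $\ge 8$ after using $L^\infty$ in time; if this fails we take instead $L^{6}$ in time with an $L^{12}$ factor for $\nabla v$. Combining these bounds gives
\[
\|fu\nabla v\|_{L^2(\Omega_T)}\le \|f\|_{L^4}\|u\|_{L^8}\|\nabla v\|_{L^\infty(L^8)},
\]
which is the required trilinear estimate.

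\textbf{Conclusion.} With the trilinear bound, the remainder terms such as $B(F,U,v)$ and $B(F,U,V)$ are $O(\|z\|_{\mathbb X}^2)$ in $\mathbb Y$. Hence $G$ is Fréchet differentiable, with the derivative stated in Lemma \ref{lem:der_G}. Moreover, $s\mapsto G'(s)$ is continuous, being polynomial with bounded multilinear coefficients, which also gives the continuous Fréchet differentiability used in Theorem \ref{teo:exist_lagr}.
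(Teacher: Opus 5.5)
\textbf{Gap in the key trilinear estimate.} Your overall architecture is sound: a bounded linear part, the quadratic term $\mu u^2$, and a continuous trilinear form. You are also right that the $\lambda$ in front of $rU$ in the statement is a typo. The problem is the key trilinear estimate, which rests on the claim $\|u\|_{L^8(\Omega_T)}\le C\|u\|_{W_4}$. That claim is false.

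In 2D, Gagliardo--Nirenberg gives $\|u\|_{L^p(\Omega)}^p\le C\|u\|_{L^4(\Omega)}^4\|u\|_{H^1(\Omega)}^{p-4}$. From $L^\infty(0,T;L^4(\Omega))\cap L^2(0,T;H^1(\Omega))$ you therefore get time integrability only for $p-4\le 2$, i.e. $p\le 6$. For $p=8$ you would need $u\in L^4(0,T;H^1(\Omega))$. Your interpolation between $L^4$ and $L^q$ with $q$ large does not avoid this: the $H^1$ factor enters with power $8\theta$, where $\theta>1/2$, which exceeds $2$.

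The failure is real, not an artifact of the method. Take $u(x,t)=t^{-1/4}\phi\bigl((x-x_0)/\sqrt t\bigr)$ with $\phi$ a smooth bump, $x_0\in\Omega$ and $T$ small. Then $\|u(t)\|_{L^4}$ is constant, $\|\nabla u(t)\|_{L^2}^2\sim t^{-1/2}$, and $\partial_t u\in L^2(0,T;(H^1(\Omega))')$, so $u\in W_4$. But $\|u(t)\|_{L^8}^8\sim t^{-1}\notin L^1(0,T)$. So your final displayed bound, with $\|u\|_{L^8}$ on the right, does not give a trilinear estimate in terms of $\|u\|_{W_4}$.

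\textbf{The fix.} The fallback you mention only in passing is the correct argument and should be the main one. With $\theta=1/3$,
\[
\|u\|_{L^6(\Omega_T)}^6\le C\|u\|_{L^\infty(0,T;L^4(\Omega))}^4\|u\|_{L^2(0,T;H^1(\Omega))}^2 .
\]
Moreover $\nabla v\in C([0,T];W^{1/2,4}(\Omega))\hookrightarrow L^\infty(0,T;L^{12}(\Omega))$. H\"older with $\frac14+\frac16+\frac1{12}=\frac12$ then gives
\[
\|fu\nabla v\|_{L^2(\Omega_T)}\le C\|f\|_{L^4(\Omega_T)}\|u\|_{W_4}\|v\|_{X_4}.
\]
With this bound, your quadratic and cubic remainder estimates and the operator-norm continuity of $s\mapsto G'(s)$ go through exactly as you describe.

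\textbf{Comparison with the paper.} Once repaired, your route differs from the paper's and is more robust. The paper computes the G\^ateaux derivative by difference quotients. It then checks $G_1'(s_n)z\to G_1'(s)z$ only direction by direction, and it bounds the direction $F$ in $L^\infty(\Omega_T)$, although $F$ is only in $L^4(\Omega_T)$. Your multilinear bound gives explicit $O(\|z\|_{\mathbb X}^2)$ remainders and continuity of $G'$ in $\mathcal L(\mathbb X,\mathbb Y)$ directly.
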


\begin{proof}
    
    For $\epsilon>0$ small and, if necessary, by modifying $\xi_i \in L^{\infty}(\Omega_T)$, $i=1,2$, in the definition of $\mathcal{F}$ in \eqref{eq:def_F}, we can ensure that $f + \epsilon F \in \mathcal{F}$. Then, we compute, from the definition of $G$ in \eqref{eq:def_G1_G2}, for $\varphi \in L^2(0,T;H^{1}(\Omega))$, 
    $$
        G_{1,\epsilon} :=\frac{1}{\epsilon}\langle G_1((u,v,f)+\epsilon(U,V,F))-G_1(u,v,f),\varphi \rangle.
    $$
    We get
    \begin{equation*}
        \begin{split}
            G_{1,\epsilon} =& \langle \partial_t U,\varphi \rangle + ( D_u \nabla U -  f U \nabla v - f u \nabla V - F u \nabla v - \epsilon (f U \nabla V + F u \nabla V + F U \nabla v), \nabla \varphi)_{L^2(\Omega_T)} \\
            &- \epsilon^2 (F U \nabla V, \nabla \varphi)_{L^2(\Omega_T)} - (r U - 2 \mu U u - \epsilon \mu U^2,\varphi)_{L^2(\Omega_T)}.
        \end{split}
    \end{equation*}
    
    Taking the limit, as $\epsilon \to 0$,  we obtain the Gateaux derivative of $G_1$ at $s=(u,v,f)$ in the direction $(U,V,F)$. On the other hand, the operator $G_2$ is linear and we can deduce that $G$ is Gateaux differentiable, with Gateaux derivative given, for any $\varphi \in L^2(0,T;H^{1}(\Omega))$, by
    \begin{align*}
        \langle G_1'(s)[z],\varphi \rangle =& \langle \partial_t U,\varphi \rangle + D_u (\nabla U,\nabla \varphi)_{L^2(\Omega_T)} - (fu\nabla V + f U \nabla v + F u \nabla v, \nabla \varphi)_{L^2(\Omega_T)} \\
        &+ (-\lambda r U + 2 \mu u U,\varphi)_{L^2(\Omega_T)}, \\
        G_2'(s)[z] =& \partial_t V - D_v \Delta V + \alpha V - \beta U, \quad \text{in } L^4(\Omega_T).
    \end{align*}
    
    To prove that $G$ is Fréchet differentiable, we show that the mapping $s \mapsto G'(s)$ is continuous from $\mathbb{X}$ into $\mathcal{L}(\mathbb{X},\mathbb{Y})$. 
    
    Let $(u_n,v_n,f_n)$ be a sequence in $\mathbb{X}$ converging to $(u,v,f) \in \mathbb{X}$, as $n\to\infty$. 
    We show that for any $(U,V,F) \in \mathbb{X}$, we have
    \begin{equation}\label{eq:DG1_cont}
        \| (G_1'(u_n,v_n,f_n) - G_1'(u,v,f)) (U,V,F)\|_{L^2(0,T;(H^{1}(\Omega))'} \to 0\quad\text{as}\quad n \to \infty.
    \end{equation}
    Indeed, for any $\varphi \in L^2(0,T;H^{1}(\Omega))$, 
    \begin{equation}\label{eq:DG1n-DG1}
        \begin{split}
            \langle (G_1'(u_n,v_n,f_n) - G_1'(u,v,f)) (U,V,F),\varphi\rangle = &- (f_n u_n \nabla V + f_n U \nabla v_n + F u_n \nabla v_n, \nabla \varphi)_{L^2(\Omega_T)} \\
            & + (f u \nabla V + f U \nabla v + F u \nabla v, \nabla \varphi)_{L^2(\Omega_T)} \\
            &+ (2 \mu u_n U,\varphi)_{L^2(\Omega_T)} - (2 \mu u U,\varphi)_{L^2(\Omega_T)}.
        \end{split}
    \end{equation}
    Let us focus on the term
    $$
        (F u_n \nabla v_n - F u \nabla v, \nabla \varphi)_{L^2(\Omega_T)} = (F(u_n-u)\nabla v_n,\nabla \varphi)_{L^2(\Omega_T)} + (F u (\nabla v_n - \nabla v),\nabla \varphi)_{L^2(\Omega_T)} =: J_1 + J_2.
    $$

    First we estimate $J_1$. Using that $F \in \mathcal{F}$ is bounded in $\Omega_T$, and using H\"older's inequality in space and time, we get
    \begin{equation}\label{eq:est_J1_1}
        \begin{split}
            |J_1| &\leq \int_{\Omega_T} |F| |u_n-u| |\nabla v_n| |\nabla \varphi| \ dxdt \\
            &\leq \|F\|_{L^\infty(\Omega_T)} 
            \int_0^T \|u_n(t)-u(t)\|_{L^4(\Omega)} \| \nabla v(t) \|_{L^4(\Omega)} \| \nabla \varphi(t) \|_{L^2(\Omega)} \ dt \\
            &\leq \|F\|_{L^\infty(\Omega_T)} \|u_n-u\|_{L^4(0,T;L^4(\Omega))} \| \nabla v \|_{L^4(\Omega_T)} \| \nabla \varphi \|_{L^2(\Omega_T)} \\
            &\leq C_1 \|u_n-u\|_{L^4(0,T;L^4(\Omega))}.
        \end{split}
    \end{equation}
    On the other hand, using the continuous embeddings 
    $$
        H^1(\Omega) \hookrightarrow L^4(\Omega) \;\mbox{ and }\; W^{s,p}(\Omega) \hookrightarrow L^{q}(\Omega),
    $$ 
    for any $1 \leq q < \infty$ if $s \, p=d$, with $d$ the spacial dimension, i.e. $\Omega \subset \mathbb R^d$, we have that
    \begin{equation}\label{eq:est_J1_2}
        \begin{split}
            |J_1| &\leq C \|F\|_{L^\infty(\Omega_T)} 
            \int_0^T \|u_n(t)-u(t)\|_{H^1(\Omega)} \| \nabla v(t) \|_{L^4(\Omega)} \| \nabla \varphi(t) \|_{L^2(\Omega)} \ dt \\
            &\leq C \|F\|_{L^\infty(\Omega_T)} 
            \| \nabla v \|_{L^\infty(0,T;L^4(\Omega))} 
            \int_0^T \|u_n(t)-u(t)\|_{H^1(\Omega)} \| \nabla \varphi(t) \|_{L^2(\Omega)} \ dt \\
            &\leq C \|F\|_{L^\infty(\Omega_T)}  \| v \|_{C(0,T;W_{\mathbf{n}}^{3/2,4}(\Omega))} \|u_n-u\|_{L^2(0,T;H^1(\Omega))}  \| \nabla \varphi \|_{L^2(\Omega_T)} \\
            &\leq C_2 \|u_n-u\|_{L^2(0,T;H^1(\Omega))}.
        \end{split}
    \end{equation}
    
    Thus, from \eqref{eq:est_J1_1} and \eqref{eq:est_J1_2}, we get  $|J_1| \leq C_3 \|(u_n-u,v_n-v,f_n-f)\|_{\mathbb{X}} \to 0$, as $n\to \infty$.

    For $J_2$, proceeding similarly, using Hölder's inequality and the continuous embeddings $H^1(\Omega) \hookrightarrow L^4(\Omega)$ and $L^4([0,T]) \hookrightarrow L^2([0,T])$, we obtain that
    \begin{equation}\label{eq:est_J2_1}
        \begin{split}
            |J_2| &\leq \int_{\Omega_T} |F| |u| |\nabla v_n - \nabla v| |\nabla \varphi| \ dxdt \\
            &\leq C \|F\|_{L^\infty(\Omega_T)} \|u_n\|_{L^\infty(0,T;L^4(\Omega))} \int_0^T \| v_n(t) - v(t)\|_{W^{2,4}(\Omega)} \| \nabla \varphi(t) \|_{L^2(\Omega)} \ dt\\
            &\leq C \|F\|_{L^\infty(\Omega_T)} \|u_n\|_{L^\infty(0,T;L^4(\Omega))} \| v_n - v\|_{L^2(0,T;W^{2,4}(\Omega))} \| \nabla \varphi \|_{L^2(\Omega_T)} \\
            &\leq C_1 \| v_n - v\|_{L^4(0,T;W^{2,4}(\Omega))}.
        \end{split}
    \end{equation}
    
    On the other hand, the continuous embedding $L^2([0,T]) \hookrightarrow L^1([0,T])$ gives
    \begin{equation}\label{eq:est_J2_2}
        \begin{split}
            |J_2| 
            &\leq C \|F\|_{L^\infty(\Omega_T)} \|u_n\|_{L^\infty(0,T;L^4(\Omega))} \sup_{t \in [0,T]} \| \nabla v_n(t) - \nabla v(t) \|_{L^4(\Omega)} \int_0^T \| \nabla \varphi(t) \|_{L^2(\Omega)} \ dt\\
            &\leq C \|F\|_{L^\infty(\Omega_T)} \|u_n\|_{L^\infty(0,T;L^4(\Omega))}\| \nabla v_n - \nabla v\|_{L^\infty(0,T;L^{4}(\Omega))} \| \nabla \varphi \|_{L^2(\Omega_T)} \\
            &\leq C_2 \| \nabla v_n - \nabla v\|_{L^\infty(0,T;L^4(\Omega))} \\
            &\leq C_2 \| v_n - v\|_{L^\infty(0,T;W_{\mathbf{n}}^{3/2,4}(\Omega))},
        \end{split}
    \end{equation}
    where in the last step we used as in \eqref{eq:est_J1_2} the continuous embedding $W_{\mathbf{n}}^{\frac{1}{2},4}(\Omega) \hookrightarrow L^{q}(\Omega)$, for any $1 \leq q < \infty$, when $\Omega \subset \mathbb R^2$.
    Thus, from \eqref{eq:est_J2_1} and \eqref{eq:est_J2_2}, we get
    $$
        |J_2| \leq C_3 \|(u_n-u,v_n-v,f_n-f)\|_{\mathbb{X}} \to 0 \quad \text{as } n \to \infty.
    $$
    
    The other terms in \eqref{eq:DG1n-DG1} are estimated analogously. 
    In particular, the estimates of the term
    $$
        (f_n u_n \nabla V, \nabla \varphi)_{L^2(\Omega_T)} - (f u \nabla V , \nabla \varphi)_{L^2(\Omega_T)}
    $$
    are also bounded by the norm $\|f_n -f\|_{L^4(\Omega_T)}$.
    This proves \eqref{eq:DG1_cont}.
    
    For $G_2$, we observe that the expression of $G_2'(s)[z]$ does not depend on $s$, thus $s \mapsto G_2'(s)$ is continuous. Together with \eqref{eq:DG1_cont}, this implies that $s \mapsto G'(s)$ is continuous and therefore $G$ is Fréchet differentiable at $s$.  
\end{proof}

\section{Regularity of elements in $\mathcal{S}_{ad}$}
\label{appendix_reg_point}

\begin{lemma}\label{lem:regular_point}
    If $\tilde s=(\tilde u, \tilde v, \tilde f) \in \mathcal{S}_{ad}$, then $\tilde s$ is a regular point. 
\end{lemma}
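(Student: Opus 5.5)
To show that $\tilde s$ is a regular point we have to prove $G'(\tilde s)[\mathcal C(\tilde s)]=\mathbb Y$. Since $\mathcal C(\tilde s)$ contains every $(U,V,0)$ with $(U,V)\in\hat W_4\times\hat X_4$, it is enough to take $F=0$. So for arbitrary $(g_1,g_2)\in L^2(0,T;(H^1(\Omega))')\times L^4(\Omega_T)$ we must find $(U,V)\in\hat W_4\times\hat X_4$ with $G_1'(\tilde s)[(U,V,0)]=g_1$ and $G_2'(\tilde s)[(U,V,0)]=g_2$. Using Lemma~\ref{lem:der_G}, this is the linearized system
\begin{equation*}
\begin{cases}
\partial_t U - D_u\Delta U - \nabla\cdot(\tilde f U\nabla\tilde v+\tilde f\tilde u\nabla V) - rU + 2\mu\tilde u U = g_1 & \text{in }\Omega_T,\\
\partial_t V - D_v\Delta V+\alpha V - \beta U = g_2 & \text{in }\Omega_T,\\
\partial_\nu U=\partial_\nu V=0 \ \text{on }\Gamma_T,\qquad U(0)=0,\ V(0)=0 & \text{in }\Omega,
\end{cases}
\end{equation*}
with the first equation taken in the weak sense. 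The signs follow \eqref{eq:def_G1_G2}, where the term $-fu\nabla v$ is paired with $\nabla\varphi$. The coefficients are fixed by Theorem~\ref{mainresult1}: $\tilde f\in L^\infty(\Omega_T)$, $\tilde u\in W_4$, $\tilde v\in X_4$, and in particular $\nabla\tilde v\in L^\infty(0,T;L^q(\Omega))\cap L^4(0,T;W^{1,4}(\Omega))$ for every $q<\infty$.

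The system is linear, so I would solve it with the same fixed-point scheme as in Section~\ref{sec:proof_theorem_wellp}, or equivalently by Faedo--Galerkin. The fixed-point version goes as follows. Freeze $\bar U\in L^4(\Omega_T)$. The maximal regularity of Theorem~\ref{regular} gives $V\in\hat X_4$ from $\beta\bar U+g_2\in L^4(\Omega_T)$, with $\|V\|_{X_4}\le C(\|\bar U\|_{L^4}+\|g_2\|_{L^4})$. Then solve the linear $U$-equation by variational theory, obtaining $U\in L^2(0,T;H^1)$ with $\partial_tU\in L^2(0,T;(H^1)')$. The map $\bar U\mapsto U$ is affine. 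The compactness argument of Section~\ref{subsec:schauder} (Aubin--Lions) gives relative compactness, and continuity is immediate from linearity. However, Schauder's theorem needs an invariant ball, so a genuine a priori estimate is required. Since the problem is linear, I would prefer to derive the energy estimate directly for the coupled system and conclude with Banach's fixed point on a short time interval, iterated over $[0,T]$; alternatively Galerkin plus weak limits.

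The energy estimate is the same one used for uniqueness in Theorem~\ref{mainresult1}. Test the $U$-equation with $U$ and the $V$-equation with $V-D_v\Delta V$. The terms to control are $(\tilde fU\nabla\tilde v,\nabla U)$ and $(\tilde f\tilde u\nabla V,\nabla U)$, handled as in \eqref{eq:gronwald_unicity}:
\[
|(\tilde f\tilde u\nabla V,\nabla U)|\le \delta(\|\nabla U\|_{L^2}^2+\|V\|_{H^2}^2)+C_\delta\|\tilde u\|_{L^4}^4\|\nabla V\|_{L^2}^2 ,
\]
\[
|(\tilde fU\nabla\tilde v,\nabla U)|\le \delta\|U\|_{H^1}^2+C_\delta\|\nabla\tilde v\|_{L^4}^4\|U\|_{L^2}^2 ,
\]
while $\mu\tilde u U^2\ge0$ has a good sign, since $\tilde u\ge0$. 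The sources contribute $\langle g_1,U\rangle\le\delta\|U\|_{H^1}^2+C\|g_1\|_{(H^1)'}^2$ and $(g_2,V-D_v\Delta V)\le\delta\|V\|_{H^2}^2+C\|g_2\|_{L^2}^2$. The Gronwall weights $\|\tilde u\|_{L^4}^4$ and $\|\nabla\tilde v\|_{L^4}^4$ lie in $L^1(0,T)$, so Gronwall gives $U\in L^\infty(0,T;L^2)\cap L^2(0,T;H^1)$ and $V\in L^\infty(0,T;H^1)\cap L^2(0,T;H^2)$, bounded by the data. Reading off the equation then gives $\partial_tU\in L^2(0,T;(H^1)')$.

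\textbf{Main obstacle.} The hardest step is upgrading $U$ to the space $\hat W_4$, i.e.\ proving $U\in L^\infty(0,T;L^4(\Omega))$, and $V$ to $\hat X_4$. The $V$ part is fine once $U\in L^4(\Omega_T)$, which follows from $L^\infty L^2\cap L^2H^1$ and Ladyzhenskaya. The $L^\infty L^4$ bound for $U$ is delicate because $g_1$ is only in $L^2(0,T;(H^1)')$. With such data the solution generally cannot be in $L^\infty L^4$: the natural regularity is only $C([0,T];L^2)$. I would first try to mimic Step~2 of Lemma~\ref{lem:uniform_bounds}, testing with $U^3$ on $(\tau,T)$ and using the local regularity \eqref{local-u-regularity}. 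That argument, however, requires $g_1\in L^q(0,T;W^{-1,4})$. So if the data class does not permit it, I would restrict or modify $\mathbb Y$: replace $L^2(0,T;(H^1)')$ by $L^q(0,T;W^{-1,4}(\Omega))$, or enlarge $W_4$ by dropping the $L^\infty L^4$ requirement in $\mathbb X$, so that surjectivity holds. I expect this mismatch between the spaces to be where the argument must be adjusted.
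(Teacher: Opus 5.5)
Your reduction is the same as the paper's: take $F=0\in\mathcal C(\tilde f)$ and solve the linearized system for arbitrary $(g_u,g_v)\in\mathbb Y$. Your energy estimate is also sound: test with $U$ and with $V-D_v\Delta V$, use Ladyzhenskaya, apply Gr\"onwall with weights $\|\tilde u\|_{L^4}^4,\|\nabla\tilde v\|_{L^4}^4\in L^1(0,T)$, then use Galerkin and maximal $L^4$-regularity for $V$. One small slip: the pairing $-(fu\nabla v,\nabla\varphi)$ in the definition of $G_1$ corresponds to $+\nabla\cdot(\tilde fU\nabla\tilde v+\tilde f\tilde u\nabla V)$ in strong form, not $-\nabla\cdot$. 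Your estimates are unaffected.

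What your argument proves is surjectivity from $\{U\in L^2(0,T;H^1(\Omega)):\ \partial_tU\in L^2(0,T;(H^1(\Omega))'),\ U(0)=0\}\times\hat X_4$, not from $\hat W_4\times\hat X_4$. So, as a proof of the lemma, it stops at the missing bound $U\in L^\infty(0,T;L^4(\Omega))$. Your suspicion is correct: this gap cannot be closed, because the lemma is false in the stated setting, even if $F\neq0$ is allowed.

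Here is a counterexample. Take $w\in L^2(\Omega)\setminus L^4(\Omega)$, e.g.\ $|x-x_0|^{-1/2}$ with $x_0\in\Omega$. Let $A$ be the Neumann realization of $-D_u\Delta$, and set $U^*(t):=\chi(t)e^{-(T-t)A}w$, where $\chi\in C^\infty([0,T])$ vanishes near $0$ and equals $1$ near $T$.
\begin{enumerate}
\item[(i)] Then $U^*(0)=0$, $U^*\in L^2(0,T;H^1(\Omega))$ and $\partial_tU^*\in L^2(0,T;(H^1(\Omega))')$, so $g_u:=\partial_tU^*+AU^*\in L^2(0,T;(H^1(\Omega))')$.
\item[(ii)] However, $\|U^*(t)\|_{L^4(\Omega)}\to\infty$ as $t\to T$. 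Otherwise some $t_k\to T$ would have $U^*(t_k)$ bounded in $L^4$, and since $U^*(t_k)\to w$ in $L^2$, this would give $w\in L^4$.
\item[(iii)] Suppose some $(U,V,F)\in\hat W_4\times\hat X_4\times\mathcal C(\tilde f)$ satisfied $G_1'(\tilde s)[(U,V,F)]=g_u$. Then $U-U^*$ solves the Neumann heat equation with zero initial datum and right-hand side $-\nabla\cdot(\tilde fU\nabla\tilde v+\tilde f\tilde u\nabla V+F\tilde u\nabla\tilde v)+rU-2\mu\tilde uU$.
\item[(iv)] Since $U,\tilde u\in L^\infty(0,T;L^4(\Omega))$, $\nabla V,\nabla\tilde v\in L^\infty(0,T;L^{12}(\Omega))$ and $\tilde f,F\in L^\infty(\Omega_T)$, this right-hand side lies in $L^\infty(0,T;(W^{1,3/2}(\Omega))')$.
\item[(v)] The Neumann heat semigroup maps that space into $L^4(\Omega)$ with the integrable singularity $\sigma^{-1/2-1/12}$. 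Hence $U-U^*\in C([0,T];L^4(\Omega))$, which forces $U^*\in L^\infty(0,T;L^4(\Omega))$, a contradiction.
\end{enumerate}
Thus $(g_u,g_v)\notin G'(\tilde s)[\mathcal C(\tilde s)]$ for any $g_v$.

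This is exactly where the paper's own proof breaks down. Its Step~1 cites Amann's weak maximal $L^p$-regularity to claim $\mathcal S_I(V)\in L^p(I;W^{1,p}(\Omega))\cap W^{1,p}(I;L^p(\Omega))\subset\hat W_4(I)$. That would require $g_u\in L^p(I;W^{-1,p}(\Omega))$ or better, not merely $g_u\in L^2(0,T;(H^1(\Omega))')$. The Step~2 contraction estimate only involves differences, in which $g_u$ cancels, so the data regularity is never checked.

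Your proposed remedy is the right kind of fix, but it must be made consistently. Shrinking only the first factor of $\mathbb Y$ to $L^q(0,T;W^{-1,4}(\Omega))$ breaks $G:\mathbb X\to\mathbb Y$. The reason is that $\partial_tu$ for $u\in W_4$ only lies in $L^2(0,T;(H^1(\Omega))')$, and the states are shown to be $W^{1,4}$-regular only on $(\tau,T)$. Dropping the $L^\infty(0,T;L^4)$ requirement from $\mathbb X$ instead means re-verifying that $G$ is well defined and $C^1$. That bound, together with $\nabla v\in L^4(0,T;L^\infty(\Omega))$, is what places $fu\nabla v$ in $L^2(\Omega_T)$ when $f$ is only in $L^4(\Omega_T)$.
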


\begin{proof}
    Let $(g_u,g_v) \in \mathbb{Y}=
    L^2(0,T;(H^{1}(\Omega))')
    \times L^4(\Omega_T)$ be arbitrary.
    Recall that $\mathcal{C}(\tilde f) = \{ \theta(f-\tilde f): \ \theta \geq 0, \ f \in \mathcal{F} \}$ denotes the conical hull of $\tilde f \in \mathcal{F}$.
    The point $\tilde s=(\tilde u, \tilde v, \tilde f) \in \mathcal{S}_{ad}$ is a regular point if for any $(g_u,g_v) \in \mathbb{Y}$, there exists $z=(U,V,F) \in \hat W_4 \times \hat X_4 \times \mathcal{C}(\tilde f)$ such that $G'(\tilde s)[z] = (g_u,g_v)$.

    Since $0 \in \mathcal{C}(\tilde f)$, we can take $F=0$ and it suffices to find $(U,V) \in \hat W_4 \times \hat X_4$ solving
    \begin{equation}\label{eq:linear_U_V_new}
        \begin{cases}
            \langle \partial_t U,\varphi \rangle + ( D_u \nabla U -  \tilde f U \nabla \tilde v - \tilde f \tilde u \nabla V , \nabla \varphi)_{L^2(\Omega_T)} = (r U - 2 \mu U \tilde u,\varphi)_{L^2(\Omega_T)} + \langle g_u,\varphi \rangle,\\
            \partial_t V - D_v \Delta V + \alpha V = \beta U + g_v,  \qquad\qquad \mbox{ in } \Omega_T,
        \end{cases}    
    \end{equation}
    for all $\varphi \in L^4(0,T;W^{1,4}(\Omega))$, with the initial and boundary conditions included in the definition of $\hat W_4$ and $\hat X_4$. We recall that $\hat W_4$ and $\hat X_4$ are subsets of $W_4$ and $X_4$, respectively,  with specified initial and boundary conditions.
    
    \noindent\textbf{Step 1: Operators.} Fix an interval $I_0=(t_0,t_1) \subset (0,T)$ and denote $\Omega_{I_0} := \Omega \times I_0$ and $\hat W_4(I_0)$, $\hat X_4(I_0)$ as subspaces of $\hat W_4$, $\hat X_4$, respectively, where the temporal domain of the functions is restricted to $I_0$.
    
    For $F \in L^4(\Omega_{I_0})$ we define $V=\mathcal{T}_{I_0}(F) \in \hat X_4(I_0)$ as the unique solution of
    $$
        \partial_t V - D_v \Delta V + \alpha V = \beta F + g_v\quad \text{in}\quad \Omega_{I_0},
    $$
    with the prescribed boundary conditions and initial value at $t=t_0$. By the $L^4$-regularity result, $\mathcal{T}_{I_0}$ is linear and continuous. Since $V \in X_4$ and $\Omega \subset \R^2$, we have that \(\nabla V \in L^\infty(I_0;L^q(\Omega,\mathbb R^2))\), for any $1 \leq q < \infty$. 
    
    Moreover, by the argument used to derive \eqref{local-u-regularity},  for any $t_0 < \tau < t_1$ we have 
    $$
        \tilde u \in L^4(\tau,t_1;W^{1,4}(\Omega)) \subset L^4(\tau,t_1;L^\infty(\Omega)).
    $$
    We denote $I=[\tau,t_1]$. 
    The following estimates can be treated as in Lemma  \ref{lem:uniform_bounds} (Step 2), integrating on $(\tau,t)$, for $t \leq t_1$, and letting $\tau \downarrow t_0$. 
    Then, we have that $\tilde u \nabla V \in L^4(I; L^4(\Omega;\mathbb R^2))$ and then $\nabla\cdot (\tilde f \tilde u \nabla V) \in L^4(I;W^{-1,4}(\Omega))$.
    
    Similarly, we define $U=\mathcal{S}_I(V)$ as the unique solution of
    \begin{equation}\label{equationc}
        \langle \partial_t U, \varphi \rangle + ( D_u \nabla U - \tilde f U \nabla \tilde v, \nabla \varphi) + (2\mu \tilde u U - r U,\varphi) = (\tilde f \tilde u \nabla V, \nabla \varphi) + \langle g_u, \varphi \rangle\quad \text{in}\quad \Omega_I,
    \end{equation}
    for all $\varphi \in L^4(0,T;W^{1,4}(\Omega))$, 
    with the prescribed boundary conditions and initial value at $t=t_0$. 
    Since $\tilde s=(\tilde u,\tilde v,\tilde f)\in \mathcal S_{ad}$, it follows that
    \[
        \tilde f \nabla \tilde v \in L^\infty(I;L^q(\Omega;\mathbb R^2)), \qquad 1\le q<\infty,
    \]
    and
    \[
        \mu \tilde u - r \in L^\infty(I;L^4(\Omega)).
    \]
    Therefore, the lower-order terms belong to $L^\infty(I;L^p(\Omega))$ for a suitable $p$.
    By the abstract results on weak maximal $L^p$-regularity 
    (see \cite[Chap.~V, \S2.6]{Amann1995}), Equation \eqref{equationc} 
    admits a unique weak solution
    \[
        U \in L^p(I;W^{1,p}(\Omega)) \cap W^{1,p}(I;L^p(\Omega)),
    \]
    provided that $u_0\in L^p(\Omega)$.  Therefore, $U \in \hat W_4(I)$.
    Again, $\mathcal{S}_I$ is linear and continuous.

    Define the fixed-point map $\Phi_I$ by
    $$
        \Phi_I : L^4(\Omega_I) \to L^4(\Omega_I), \qquad \Phi_I(F) := \mathcal{S}_I(\mathcal{T}_I(F))).
    $$

    \noindent\textbf{Step 2: Contraction on a short time interval.} Let $F_1$, $F_2 \in L^4(\Omega_I)$ and set
    $$
        V_i = \mathcal{T}_I(F_i), \quad U_i = \mathcal{S}_I(V_i), \quad \delta F = F_1 - F_2, \quad \delta V = V_1 - V_2, \quad \delta U = U_1 - U_2.
    $$
    
    From standard energy estimates and using $L^4$-regularity, we show that 
    $$
        \|\Phi_I(F_1) - \Phi_I(F_2) \|_{L^4(\Omega_I)} = \| \delta U \|_{L^4(\Omega_I)} \leq \tilde C |I|^{\theta} e^{\tilde D |I|} \|\delta F\|_{L^4(\Omega_I)},
    $$
    for some constants $\theta > 0$, and $\tilde C, \tilde D > 0$. This allow us to chose $|I|$ small enough such that $\Phi_I$ is a contracting map.
    
    Indeed, $L^4$-regularity in the equation of $V$ gives
    \begin{equation}\label{eq:est_V}
        \|V_1-V_2\|_{L^4(\Omega_I)} + \|\nabla (V_1-V_2)\|_{L^4(\Omega_I)}   \leq C\|F_1 - F_2\|_{L^4(\Omega_I)}.    
    \end{equation}
    
    Now we use $L^4$-regularity for the $U$-equation, with source term of the form $-\nabla \cdot (\tilde f \tilde u \nabla (V_1-V_2))$. 
    We recall that $\tilde f \tilde u \in L^\infty(I;L^4(\Omega))$.
    As in the proof of the uniform \(L^\infty(0,T;L^4(\Omega))\) bound in Lemma \ref{lem:uniform_bounds} (Step 2), we test the equation satisfied by $\delta U$ against  $(\delta U)^3 \in L^4([\tau,T]\cap I;W^{1,4}(\Omega))$ for any $\tau>t_0$.
    We get 
    \begin{equation}
    \label{eq:esti_regular_U}
        \begin{split}
            &\frac{1}{4}\frac{d}{dt} \|\delta U(t)\|_{L^4(\Omega)}^4 + 3 D_u \int_\Omega |\delta U|^2 |\nabla \delta U|^2 dx  + 2\mu \int_\Omega \tilde u |\delta U|^4 dx\\
            &\leq 3\int_\Omega |\tilde f| |\delta U|^3 |\nabla \tilde v| |\nabla \delta U| dx + 3 \int_\Omega |\tilde f| |\delta U|^2 |\tilde u| |\nabla \delta V| |\nabla \delta U| dx + r \int_\Omega |\delta U|^4 dx =: I_1+I_2+I_3.  
        \end{split}
    \end{equation}

    The term $I_1$ in the right-hand side can be estimated following the steps that start in \eqref{eq:esti_u3}. 
    The second integral $I_2$ in the right-hand side of \eqref{eq:esti_regular_U} is estimated similarly. That is, we define $w=\delta U^2$, then $I_2 = \frac{3}{2} \|f\|_{L^\infty(\Omega)} \int_\Omega |\tilde u| |\nabla \delta V| w^{1/2} \nabla w \ dx$.
    Using H\"older inequality, Ladyzhenskaya inequality and Young's inequality, we obtain, for small constants $\epsilon, \varepsilon>0$, 
    $$
        I_2 \leq \|f\|_{L^\infty(\Omega)} \left( \epsilon \|w\|_{L^2(\Omega)}^2 + \varepsilon \|\nabla w\|_{L^2(\Omega)}^2 + C_{\epsilon,\varepsilon} \|\tilde u\|_{L^4(\Omega)}^4 \|\nabla \delta V\|_{L^4(\Omega)}^4 \right).
    $$
    
    We can absorb the term $\varepsilon \|f\|_{L^\infty(\Omega)} \|\nabla w\|_{L^2(\Omega)}^2$.
    Indeed, using the estimate of $I_1$ analogous to \eqref{eq:est_u_eps_L4}, and the equality $\|w\|_{L^2(\Omega)}^2 = \|\delta U \|_{L^4(\Omega)}^4$, then \eqref{eq:esti_regular_U} yields, for suitable $C_i>0$, $i=0,1,2$,
    \begin{equation*}
        \begin{split}
            \frac{d}{dt} \|\delta U(t)\|_{L^4(\Omega)}^4 + (1-\varepsilon)\int_\Omega |\nabla w|^2 dx  + \int_\Omega \tilde u |\delta U|^4 dx \leq & C_1 \|\delta U\|_{L^4(\Omega)}^4 + C_2 \|\tilde u\|_{L^4(\Omega)}^4 \|\nabla \delta V\|_{L^4(\Omega)}^4 \\
            & + C_0 \Bigl(1+ \|\tilde f\|_{L^\infty(\Omega_T)}^4 \|\nabla \delta V\|_{L^4(\Omega)}^4 \Bigr) \|\delta U\|_{L^4(\Omega)}^4.
        \end{split}
    \end{equation*}

    Discarding the positive terms, and applying Gr\"onwall's inequality on $[\tau,t]$ yields
    \begin{equation}
    \label{eq:esti_deltaU_L4}
        \begin{split}
            \|\delta U (t)\|_{L^4(\Omega)}^4
            \leq
            & \left( \|\delta U(\tau)\|_{L^4(\Omega)}^4 + C_2 \int_\tau^t \|\tilde u(s)\|_{L^4(\Omega)}^4 \|\nabla \delta V(s)\|_{L^4(\Omega)}^4 ds \right) \times \\
            &\exp\left(
            C_3\int_\tau^t
            \left( 1+\|\nabla \delta V (s)\|_{L^4(\Omega)}^4 \right)\,ds
            \right).       
        \end{split}
    \end{equation}
    Since $\delta U \in C([0,T];L^4(\Omega))$ and
    $\delta U(0)=u_0-u_0=0$, we have
    \[
        \|\delta U(\tau)\|_{L^4(\Omega)}^4
        \longrightarrow 0
        \qquad\text{as }\tau\downarrow t_0.
    \]
    Moreover, $\tilde u \in C([0,T];L^4(\Omega))$, and
    $\int_0^T \|\delta V(s)\|_{L^4(\Omega)}^4 ds < \infty$, 
    then,
    $$
        \int_\tau^t \sup_{t \in I} \|\tilde u(s)\|_{L^4(\Omega)}^4 \|\nabla \delta V(s)\|_{L^4(\Omega)}^4 ds \to \sup_{t \in I_0} \| \tilde u(s)\|_{L^4(\Omega)}^4 \int_{t_0}^t \|\delta V(s)\|_{L^4(\Omega)}^4 ds  
        \qquad\text{as }\tau\downarrow t_0.
    $$

    Thus, letting $\tau\downarrow t_0$ in \eqref{eq:esti_deltaU_L4}, we obtain
    \[
        \|\delta U (t)\|_{L^4(\Omega)}^4
        \leq
        \left( C_2 \sup_{t \in I_0} \| \tilde u(s)\|_{L^4(\Omega)}^4 \int_{t_0}^t \|\delta V(s)\|_{L^4(\Omega)}^4 ds \right)
        \exp\left(
        C_3\int_{t_0}^t
        \left( 1+\|\nabla \delta V (s)\|_{L^4(\Omega)}^4 \right)\,ds
        \right).
    \]

    On the other hand,
    $$
        \|\delta U\|_{L^4(I_0;L^4(\Omega))}^4 \leq |I_0| \sup_{t \in I_0} \|\delta U(t)\|_{L^4(\Omega)}^4,
    $$
    which, combined with the previous bound, yields
    \begin{equation*}
        \begin{split}
            \|\delta U\|_{L^4(I_0;L^4(\Omega))} &\leq C|I_0|^{1/4} \left[\left( \sup_{t \in I_0} \| \tilde u(s)\|_{L^4(\Omega)}^4 \int_{t_0}^{t_1} \|\delta V(s)\|_{L^4(\Omega)}^4 ds \right)
            \exp\left(
            C_3\int_{t_0}^{t_1}
            \left( 1+\|\nabla \delta V (s)\|_{L^4(\Omega)}^4 \right)\,ds
            \right) \right]^{1/4}\\
            &\leq C|I_0|^{1/4}  \|\tilde u\|_{C(I_0;L^4(\Omega))} \|\nabla\delta V\|_{L^4(I_0;L^4(\Omega))} 
            \exp\left(
            \frac{C_3}{4}\int_{t_0}^{t_1}
            \left( 1+\|\nabla \delta V (s)\|_{L^4(\Omega)}^4 \right)\,ds \right).
        \end{split}
    \end{equation*}
    Thus, using \eqref{eq:est_V} and denoting $D = \int_{0}^{T}
    \|\nabla \delta V (s)\|_{L^4(\Omega)}^4 \,ds$ we get
    \begin{equation*}
        \begin{split}
            \|\delta U\|_{L^4(I_0;L^4(\Omega))} &\leq C|I_0|^{1/4}  \|\tilde u\|_{C(I_0;L^4(\Omega))} \|\delta F\|_{L^4(I_0;L^4(\Omega))} 
            \exp\left(
            \frac{C_3}{4}
            \left( |I_0|+D \right)\right) \leq \tilde C |I_0|^{1/4} e^{
            \frac{C_3}{4}|I_0|}  \|\delta F\|_{L^4(\Omega_{I_0})}.
        \end{split}
    \end{equation*}
    
    The constant $\tilde C$ is independent of $t_0$, $t_1$ and depends only on the coefficients and $\tilde s$.
    
    Choose $\tau_0>0$ such that $\tilde C\tau_0 < 1$. Then, for any interval $I_0$ with $|I_0|^{1/4} e^{
    \frac{C_3}{4}|I_0|} \leq \tau_0$, $\Phi_{I_0}$ is a contraction on $L^4(\Omega_{I_0})$ and has a unique fixed point $F \in L^4(\Omega_{I_0})$ by Banach's fixed point theorem. Setting, $U=F$ and $V=\mathcal{T}_{I_0}(F)$ yields a solution $(U,V) \in \hat W_4(I_0) \times \hat X_4(I_0)$ of \eqref{eq:linear_U_V_new} on $I_0$.

    \noindent\textbf{Step 3: Continuation up to time $T$.} Let $N \in \mathbb{N}$ be such that $T/N \leq \tau_0$ and define $t_k := kT/N$. Starting from $t_0=0$, we construct on each interval $I_k = (t_{k-1},t_k)$ a solution $(U,V)$ by the previous step, using as initial data at $t_{k-1}$ the values $(U(t_{k-1}),V(t_{k-1}))$ obtained from the preceding interval. Uniqueness on each subinterval ensures that these local solutions match at the nodes $t_k$, thus producing a global solution $(U,V) \in \hat W_4 \times \hat X_4$ on $(0,T)$.

    Finally, taking $F=0 \in \mathcal{C}(\tilde f)$, we have found $(U,V,F) \in \hat W_4 \times \hat X_4 \times \mathcal{C}(\tilde f)$ such that $G'(\tilde s)[U,V,F]=(g_u,g_v)$. Hence, $\tilde s$ is a regular point.
\end{proof}

\bibliographystyle{plain}
 \bibliography{biblio}
 \vfill
\end{document}